\documentclass[10pt, reqno, english]{amsart}  
\usepackage[utf8]{inputenc}
\usepackage[T1]{fontenc}
\usepackage{amsmath,amsthm}
\usepackage{amsfonts,amssymb}
\usepackage{url}
\usepackage{mathtools}  
\usepackage[colorlinks=true,urlcolor=blue,linkcolor=red,citecolor=magenta]{hyperref}
\usepackage{enumerate, paralist}
\usepackage{tikz-cd}
\usepackage{xcolor}
\usepackage{hyperref}
\usepackage{caption}

\usepackage{soul}


\newtheoremstyle{myremark} 
{7pt}                    
{7pt}                    
{}  	                 
{}                           
{\bf}       	         
{.}                          
{.5em}                       
{}  

\makeatletter
\newtheorem*{rep@theorem}{\rep@title}
\newcommand{\newreptheorem}[2]{%
\newenvironment{rep#1}[1]{%
 \def\rep@title{#2 \ref{##1}}%
 \begin{rep@theorem}}%
 {\end{rep@theorem}}}
\makeatother

\theoremstyle{plain}
\newtheorem{lemma}{Lemma}[section]
\newtheorem{theorem}{Theorem}
\newreptheorem{theorem}{Theorem}
\newtheorem{corollary}[lemma]{Corollary}

\theoremstyle{definition}
\newtheorem{definition}[lemma]{Definition}
\newtheorem{question}[lemma]{Question}

\theoremstyle{myremark}
\newtheorem{remark}[lemma]{Remark}
\newtheorem{example}[lemma]{Example}

\newcommand{\R}{\mathbb{R}}
\newcommand{\Z}{\mathbb{Z}}
\newcommand{\F}{\mathbb{F}}

\newcommand{\diam}{\mathrm{diam}}

\newcommand{\conv}{\mathrm{conv}}

\newcommand{\sm}{\mathrm{SM}}

\newcommand{\SM}{\mathrm{SM}}

\newcommand{\vr}[2]{\mathrm{VR}(#1;#2)}

\newcommand{\gh}{\mathrm{GH}}

\newcommand{\supp}{\mathrm{supp}}

\newcommand{\ind}{\mathrm{ind}}
\newcommand{\coind}{\mathrm{coind}}

\linespread{1.2}

\makeatletter
\@namedef{subjclassname@2020}{%
  \textup{2020} Mathematics Subject Classification}
\makeatother

\begin{document}

\title[Projective codes and odd maps]{The topology of projective codes \\ and the distribution of zeros of odd maps}

\author{Henry Adams}
\address[HA]{Department of Mathematics, Colorado State University, Fort Collins, CO 80523, USA}
\email{henry.adams@colostate.edu}

\author{Johnathan Bush}
\address[JB]{Department of Mathematics, University of Florida, Gainesville, FL 32611, USA}
\email{bush.j@ufl.edu}

\author{Florian Frick}
\address[FF]{Dept.\ Math.\ Sciences, Carnegie Mellon University, Pittsburgh, PA 15213, USA\newline \indent Inst. Math., Freie Universit\"at Berlin, Arnimallee 2, 14195 Berlin, Germany}
\email{frick@cmu.edu}

\keywords{Projective codes, Borsuk--Ulam theorems, metric thickenings}

\subjclass[2020]{
52C17, 
55N91, 
05B40, 
55N31. 
}

\thanks{FF was supported by NSF grant DMS 1855591 and a Sloan Research Fellowship.}


\begin{abstract}
\small
We show that the size of codes in projective space controls structural results for zeros of odd maps from spheres to Euclidean space.
In fact, this relation is given through the topology of the space of probability measures on the sphere whose supports have diameter bounded by some specific parameter.
Our main result is a generalization of the Borsuk--Ulam theorem, and we derive four consequences of it:
(i)~We give a new proof of a result of Simonyi and Tardos on topological lower bounds for the circular chromatic number of a graph;
(ii)~we study generic embeddings of spheres into Euclidean space and show that projective codes give quantitative bounds for a measure of genericity of sphere embeddings;
and we prove generalizations of (iii)~the Ham Sandwich theorem and (iv)~the Lyusternik--Shnirel'man--Borsuk covering theorem for the case where the number of measures or sets in a covering, respectively, may exceed the ambient dimension.
\end{abstract}

\date{\today}
\maketitle

\section{Introduction}

Given a compact metric space $(X,d)$, a central problem in coding theory is to determine a configuration of $n$ points in $X$ that maximizes the minimal distance between them. 
More precisely, for each $n\geq 1$, we define \[p_n(X) \coloneqq \sup \{\varepsilon > 0~:~\text{there are} \ x_1, \dots, x_n \in X \ \text{with} \ d(x_i,x_j) \ge \varepsilon \ \text{for all} \ i \ne j\}.\]
By compactness, one can find  $\{x_1,\dots, x_n\}\subseteq X$ such that $d(x_i,x_j)\geq p_n(X)$ whenever $i\neq j$; such a set is called a \textit{code of size $n$}. 

For the $d$-sphere $S^d$, a code $\{x_1,\dots, x_n\}\subset S^d$ is called a \textit{spherical code}. 
Spherical codes have generated considerable interest; 
see for example~\cite{thompson1983error}.

Spheres are naturally equipped with the $\Z/2$-action exchanging antipodal points, denoted $x\mapsto -x$, and a subset of a sphere that is invariant under this action is said to be \emph{centrally-symmetric}.
Hence, one may also consider the problem of finding centrally-symmetric spherical codes, that is, spherical codes in which points come in antipodal pairs $\{x,-x\}$~\cite{bukh2020,cohn2016optimal}.
By equipping real projective space $\R P^d$ with the quotient metric induced by the antipodal action, a centrally-symmetric spherical code in $S^d$ is equivalent to a code in $\R P^d$, i.e.\ a \textit{projective code}. 
With this metric, the distance between two points in $\R P^d$ is the angle between the lines they determine in~$\R^{d+1}$.
In other words, a projective code is a collection of lines through the origin in~$\R^{d+1}$ that are as close to orthogonal as possible.
Therefore, the problem of finding a projective code of size $n$ may be rephrased as follows: Given $c>0$, find $n$ unit vectors $x_1,\dots, x_n\in S^d\subset \R^{d+1}$ such that $|\langle x_i,x_j\rangle |\leq c$ whenever $i\neq j$. 

In the present manuscript, we establish relationships between projective codes and the following two problem areas:
\begin{compactenum}[(1)]

\item \emph{Metric thickenings}: For a compact metric space $X$ and $\delta > 0$, determine bounds for the topology of the space of probability measures on $X$ supported on finite sets of diameter at most~$\delta$, equipped with an optimal transport metric.
This is the \emph{metric thickening $X_\delta$} at scale~$\delta$.

\item\label{it:zeros}
\emph{Zeros of odd maps}: A map $f\colon S^d \to \R$ is odd if $f(-x) = -f(x)$ for all~$x\in S^d$.
Let $V$ be an $n$-dimensional vector space of continuous odd maps $S^d \to \R$.
Find a small $\delta \ge 0$ such that there is a set $A \subset S^d$ of diameter at most~$\delta$ with the property that for every $f \in V$ the restriction $f|_A$ is not strictly positive.
\end{compactenum}

Metric thickenings were introduced in~\cite{AAF} to capture the geometry of a metric space at a given scale~${\delta > 0}$.
Here we show that large projective codes provide lower bounds for the topology of metric thickenings of spheres (in terms of their cohomological index), which in turn give non-trivial upper bounds for $\delta$ in problem area~(\ref{it:zeros}), and thus structural results for zeros of odd maps.
Our main result is:

\begin{theorem}
\label{thm:main}
Let $V$ be an $n$-dimensional vector space of continuous odd maps $S^d \to \R$.
Then there is a set $A \subset S^d$ of diameter at most $\delta = \pi-p_{n+1}(\R P^d)$ such that for every $f \in V$ the restriction $f|_A$ is not strictly positive.
\end{theorem}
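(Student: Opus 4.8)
The plan is to reformulate the problem as a Borsuk--Ulam-type statement on the metric thickening and then feed in the projective code. First I would set up the obstruction-theoretic picture: an $n$-dimensional space $V$ of continuous odd maps $S^d \to \R$ gives a continuous odd map $F \colon S^d \to V^* \cong \R^n$ by $x \mapsto (f \mapsto f(x))$, and the statement to be proved is equivalent to saying that one cannot find a set $A$ of small diameter on which \emph{every} $f \in V$ is strictly positive, i.e.\ on which $F(A)$ avoids a closed half-space-type obstruction. The natural home for this is the space $\cP_\delta(S^d)$ of finitely supported probability measures whose support has diameter at most $\delta$, with an optimal transport metric, carrying the induced free $\Z/2$-action $\mu \mapsto (-1)_*\mu$ (free because a measure and its antipode cannot share support once $\delta < \pi$). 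The map $F$ extends by integration to an odd continuous map $\widehat F \colon \cP_\delta(S^d) \to \R^n$, $\widehat F(\mu) = \int F \, d\mu$, and the assertion ``$f|_A$ is not strictly positive for all $f$'' becomes: there exists $\mu$ with $\diam(\supp\mu) \le \delta$ and $\widehat F(\mu) = 0$. So the theorem reduces to showing that every odd continuous map $\cP_\delta(S^d) \to \R^n$ has a zero, for $\delta = \pi - p_{n+1}(\R P^d)$.

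Next I would invoke the cohomological index (or $\Z/2$-coindex) criterion: a free $\Z/2$-space $Y$ admits no odd map $Y \to S^{n-1}$, equivalently every odd map $Y \to \R^n$ has a zero, as soon as $\coind_{\Z/2}(Y) \ge n$, e.g.\ when there is an odd map $S^n \to Y$. Thus it suffices to construct a $\Z/2$-equivariant map $S^n \to \cP_\delta(S^d)$ with the above $\delta$. Here is where the projective code enters. Let $n+1$ be the relevant size and take a centrally-symmetric spherical code $\{\pm x_1, \dots, \pm x_{n+1}\} \subset S^d$ realizing $p_{n+1}(\R P^d)$, so that the pairwise projective distances satisfy $d_{\R P^d}([x_i],[x_j]) \ge p_{n+1}(\R P^d)$, which on the sphere says each pair $x_i, \pm x_j$ is separated by an angle that is either $\ge p_{n+1}$ or $\le \pi - p_{n+1}$; choosing the representative on the correct side, any subset of $\{x_1,\dots,x_{n+1}\}$ (with signs chosen consistently along a face) spans a set of diameter at most $\pi - p_{n+1} = \delta$. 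Now map the boundary of the $n$-simplex's double — i.e.\ $S^n$ realized as the boundary of the cross-polytope $\conv\{\pm e_1, \dots, \pm e_{n+1}\}$ with the antipodal action — into $\cP_\delta(S^d)$ by sending a point $\sum_i \lambda_i (\pm e_i)$ (with $\lambda_i \ge 0$, $\sum \lambda_i = 1$, at most one sign per coordinate on each face) to the measure $\sum_i \lambda_i \delta_{\pm x_i}$. Each such measure is supported on a set of vectors whose pairwise angles are at most $\delta$, so it lies in $\cP_\delta(S^d)$; the map is continuous in the transport metric and manifestly $\Z/2$-equivariant.

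The remaining step is to assemble these pieces: the equivariant map $S^n \to \cP_\delta(S^d)$ forces $\coind_{\Z/2}(\cP_\delta(S^d)) \ge n$, hence the Borsuk--Ulam-type conclusion that $\widehat F$ vanishes somewhere, hence the desired set $A = \supp\mu$ of diameter $\le \delta$ on which no $f \in V$ is strictly positive. The main obstacle I anticipate is the \emph{geometric bookkeeping of signs} on the cross-polytope: one must verify that on every face of the cross-polytope the chosen representatives $\pm x_i$ can be selected consistently so that the support of the associated measure genuinely has diameter at most $\pi - p_{n+1}(\R P^d)$ — this is exactly the translation of ``projective distance large'' into ``spherical diameter small after reflecting,'' and it needs the code to be centrally symmetric and the face structure of $S^n$ as $\partial(\text{cross-polytope})$ to match the $2^{n+1}$ sign patterns. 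A secondary technical point is checking continuity of $\mu$-valued maps in the optimal transport metric across faces where some $\lambda_i \to 0$ (so a sign ambiguity disappears in the limit), but this is routine since the mass at the vanishing vertex goes to zero. Everything else — the integration map being odd and continuous, and the index bound implying a zero — is standard once the equivariant map from $S^n$ is in hand.
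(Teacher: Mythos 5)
Your proposal is correct, and its core is the same as the paper's: the odd map $S^n \to S^d_\delta$ built from a projective code of size $n+1$ on the boundary of the cross-polytope is exactly the construction in Theorem~\ref{thm:lower-bound}. The only organizational difference is the final step: the paper argues by contradiction, showing (Theorem~\ref{thm:upper-bound}) that if no such $A$ existed one could build an odd map $S^d_\delta \to S^{n-1}$ and hence force $\ind(S^d_\delta) \le n-1$, whereas you apply the classical Borsuk--Ulam theorem directly to the odd composition $S^n \to S^d_\delta \to \R^n$ to produce a measure $\mu$ with $\int_{S^d} f \, d\mu = 0$ for all $f \in V$. This is the same move the paper makes in Corollary~\ref{cor:main}, and it even yields the marginally stronger conclusion that $0$ lies in the convex hull of the evaluation image of $A=\supp(\mu)$, not merely that each $f$ fails to be strictly positive there. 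One remark: the ``sign bookkeeping'' you flag as the main anticipated obstacle is in fact vacuous. If $d_{\R P^d}([x_i],[x_j]) \ge \alpha$, then the two spherical distances $d(x_i^+,x_j^+)$ and $d(x_i^+,x_j^-)$ sum to $\pi$ and are each at least $\alpha$, hence each at most $\pi-\alpha$; so \emph{every} choice of lifts of distinct code points lies within $\pi-\alpha$ of every other, and no consistency condition across the faces of the cross-polytope is needed (equivariance already forces $\pm e_j \mapsto x_j^{\pm}$, and the diameter bound holds for all sign patterns). Continuity where some $\lambda_i \to 0$ is likewise automatic since the map is affine in the barycentric coordinates and the Wasserstein distance is controlled by the vanishing mass.
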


Thus if $\R P^d$ has a close packing by metric balls, then odd maps $S^d \to \R$ cannot stay positive on large subsets of~$S^d$.
One can regard Theorem~\ref{thm:main} as
an asymptotic Borsuk--Ulam theorem.
To recover the classical Borsuk--Ulam theorem that any odd map $S^n\to\R^n$ has a zero, one must instead phrase Theorem~\ref{thm:main} in terms of the topology of metric thickenings of the sphere; see Remark~\ref{rmk:gen-BU}.
This theorem improves on a recent Borsuk--Ulam result for odd maps $S^d\to\R^n$ with $n\ge d$, proven by the authors in~\cite{ABF}.
We also refer the reader to~\cite{crabb2021borsuk}, which uses characteristic classes to give related generalizations of the Borsuk--Ulam theorem into higher-dimensional codomains.
The Borsuk--Ulam theorem has found numerous applications across mathematics; see for instance~\cite{matousek2003using} for applications in combinatorics and discrete geometry. 

An equivalent formulation of the Borsuk--Ulam theorem states that there does not exist a continuous odd map $S^n\to S^d$ when $n>d$. 
However, the obstruction to the existence of such maps can be eliminated by thickening the codomain.
The following theorem makes this precise and establishes a relationship between the topology of metric thickenings of spheres and projective codes. 

\begin{theorem}
\label{thm:lower-bound}
Let $d$ and $n$ be positive integers, and let $\delta \ge \pi-p_{n+1}(\R P^d)$.
Then there is a continuous odd map $S^n \to S^d_\delta$.
\end{theorem}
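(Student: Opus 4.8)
The plan is to construct the odd map explicitly from a large projective code. First I would fix a code $\{[x_0],\dots,[x_n]\}\subset\R P^d$ of size $n+1$ realizing $p_{n+1}(\R P^d)$ and choose unit representatives $x_0,\dots,x_n\in S^d$ (either lift of each line works). The key observation to record is that the distance in $\R P^d$ between two lines is $d_{\R P^d}([x_i],[x_j])=\min\{d_{S^d}(x_i,x_j),\,\pi-d_{S^d}(x_i,x_j)\}$, so the code property $d_{\R P^d}([x_i],[x_j])\ge p_{n+1}(\R P^d)$ translates into the \emph{two-sided} estimate
\[
p_{n+1}(\R P^d)\ \le\ d_{S^d}(x_i,x_j)\ \le\ \pi-p_{n+1}(\R P^d)\qquad(i\ne j).
\]
Hence $d_{S^d}(\varepsilon x_i,\varepsilon' x_j)\le\pi-p_{n+1}(\R P^d)\le\delta$ for all signs $\varepsilon,\varepsilon'\in\{\pm1\}$, so any finite subset of $\{\pm x_0,\dots,\pm x_n\}$ that uses at most one of $\{x_i,-x_i\}$ for each index $i$ has diameter at most $\delta$.

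Next I would define $F\colon S^n\to\cP(S^d)$ by sending $y=(y_0,\dots,y_n)\in S^n\subset\R^{n+1}$ to the probability measure $F(y)=\sum_{i:\,y_i\ne0} y_i^2\,\delta_{\sign(y_i)\,x_i}$, which is genuinely a probability measure since $\sum_i y_i^2=1$ and at least one coordinate is nonzero. By the previous paragraph the support of $F(y)$, a subset of $\{\sign(y_i)x_i : y_i\ne 0\}$, has diameter at most $\delta$, so $F$ in fact takes values in the metric thickening $S^d_\delta$. I would then verify two properties. Oddness is immediate: replacing $y$ by $-y$ sends each atom $\sign(y_i)x_i$ to its antipode while leaving the weights $y_i^2$ unchanged, so $F(-y)=(-\id)_*F(y)$, which is exactly the image of $F(y)$ under the $\Z/2$-action on $S^d_\delta$ induced by the antipodal map of $S^d$. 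For continuity in the optimal transport metric, the point is that $y\mapsto y_i^2$ is continuous and an atom can only jump from $x_i$ to $-x_i$ as the coordinate $y_i$ passes through $0$, where its weight vanishes; thus for $y'$ near $y$ one couples $F(y)$ and $F(y')$ by leaving the shared mass in place and transporting the remaining mass, whose total is $O(\|y-y'\|)$, over a distance at most $\diam S^d=\pi$, giving transport cost $\to 0$.

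I expect essentially all of the content to sit in the first paragraph: it is the \emph{upper} bound $d_{S^d}(x_i,x_j)\le\pi-p_{n+1}(\R P^d)$ — rather than the lower bound familiar from coding theory — that forces every support $\{\sign(y_i)x_i\}$ to be small enough to lie in $S^d_\delta$. The only genuinely technical step is the continuity estimate on the strata of $S^n$ where some coordinates vanish, and this is a routine Wasserstein bound, not a real obstacle. (Equivalently, one can realize $S^n$ as the $(n+1)$-fold join $S^0*\cdots*S^0$ with its diagonal $\Z/2$-action and describe $F$ as the map of joins sending the $i$-th factor $S^0=\{\pm1\}$ to $\{x_i,-x_i\}\subset S^d$ and a formal convex combination to the corresponding convex combination of Dirac measures; this repackages the same construction.)
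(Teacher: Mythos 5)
Your proposal is correct and is essentially the paper's proof: both fix a projective code of size $n+1$, observe that any two lifts $\pm x_i,\pm x_j$ with $i\ne j$ are within $\pi-p_{n+1}(\R P^d)$ of each other, and send $S^n$ (viewed as the join of $n+1$ copies of $S^0$, equivalently the boundary of the crosspolytope) to the corresponding convex combinations of Dirac measures. Your parametrization by the weights $y_i^2$ on the round sphere rather than $\ell_1$-coordinates on the crosspolytope is a cosmetic difference, and your explicit Wasserstein continuity estimate is if anything more careful than the paper's.
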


Theorem~\ref{thm:lower-bound} gives lower bounds for the topology of metric thickenings of spheres, $S^d_\delta$, in terms of packings of metric balls in projective space.
Dually, coverings of projective space by metric balls provide upper bounds for the topology of metric thickenings in Theorem~\ref{thm:covering}.
Thus the topology of the metric thickening $S^d_\delta$ is sandwiched between packings and coverings of projective space.

\begin{theorem}
\label{thm:covering}
Let $\delta > 0$.
Suppose there are $n$ points $x_1, \dots, x_n \in \R P^d$ such that every point $y \in \R P^d$ is strictly within distance $\delta$ from one of the~$x_j$.
Then there is an odd map $S^d_{\pi-2\delta} \to S^{n-1}$.
\end{theorem}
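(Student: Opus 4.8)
The plan is to build the odd map $S^d_{\pi-2\delta} \to S^{n-1}$ explicitly from the covering data, using the points $x_1,\dots,x_n \in \R P^d$ together with their antipodal lifts $\pm\tilde x_1, \dots, \pm\tilde x_n \in S^d$. Here is the guiding idea: a point of $S^d_{\pi - 2\delta}$ is a probability measure $\mu$ supported on a set of diameter at most $\pi - 2\delta$. For each $j$, I want a real-valued ``coordinate'' that measures how much mass $\mu$ places near $+\tilde x_j$ versus near $-\tilde x_j$, in a way that is continuous in the optimal transport metric, odd under the $\Z/2$-action on $S^d_{\pi-2\delta}$ (which negates the support), and such that the resulting vector in $\R^n$ is never the zero vector. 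The covering hypothesis is exactly what guarantees non-vanishing: since the open $\delta$-balls around the $x_j$ cover $\R P^d$, the open ``double-balls'' $B(\tilde x_j,\delta)\cup B(-\tilde x_j,\delta)$ cover $S^d$, and a set of diameter $\pi - 2\delta < \pi$ is small enough that it cannot meet both halves $B(\tilde x_j,\delta)$ and $B(-\tilde x_j,\delta)$ of any single such double-ball — the two halves are separated by distance $\pi - 2\delta$. So each support sits in at least one $B(\tilde x_j,\delta)$ or its antipode, never straddling.

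Concretely, I would set $\phi_j(\tilde x) = \max\{0,\, \delta - d(\tilde x, \tilde x_j)\}$ on $S^d$, and define $g_j(\mu) = \int_{S^d}\big(\phi_j(\tilde x) - \phi_j(-\tilde x)\big)\,d\mu(\tilde x)$, giving a map $g = (g_1,\dots,g_n)\colon S^d_{\pi-2\delta} \to \R^n$. Each $\phi_j$ is $1$-Lipschitz, so $g$ is continuous with respect to the optimal transport (Wasserstein) metric by the Kantorovich--Rubinstein duality bound; it is odd because negating the support of $\mu$ swaps the roles of $\phi_j(\tilde x)$ and $\phi_j(-\tilde x)$. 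For non-vanishing: given $\mu$ with support $S$ of diameter $\le \pi - 2\delta$, pick $j$ and a sign so that $S \subset B(\tilde x_j,\delta)$ (after possibly replacing $\tilde x_j$ by $-\tilde x_j$); this is possible by the covering argument above. Then on $S$ we have $\phi_j(\tilde x) > 0$ while $\phi_j(-\tilde x) = 0$ (since $d(-\tilde x,\tilde x_j) \ge \pi - \delta > \delta$, using $\delta < \pi/2$, which follows from the covering being nontrivial), so $g_j(\mu) = \int_S \phi_j\,d\mu > 0$. Hence $g$ omits the origin, and $x \mapsto g(x)/\|g(x)\|$ is the desired odd map to $S^{n-1}$.

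The step I expect to require the most care is the two-sided separation bookkeeping: verifying that a diameter-$(\pi-2\delta)$ set cannot intersect both $B(\tilde x_j,\delta)$ and $B(-\tilde x_j,\delta)$, and that when it lies in $B(\tilde x_j, \delta)$ the antipodal bump $\phi_j(-\,\cdot\,)$ genuinely vanishes there. Both reduce to the triangle inequality on $S^d$ together with the elementary fact that $d(\tilde x, -\tilde x) = \pi$, but one must be attentive to the strictness of the inequalities — the covering hypothesis is stated with \emph{strict} distance $< \delta$, which is what forces $\delta < \pi/2$ (a single closed hemisphere cannot be strictly covered from a point at distance exactly $\delta$ on the boundary unless $\delta > \pi/2$... more simply, if $\delta \le \pi/2$ one still gets a valid cover; the key point is only that $2\delta < \pi$ so that $\pi - 2\delta \ge 0$ makes sense and the diameter constraint is nonvacuous). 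A secondary routine point is confirming that $g$ is well-defined on \emph{all} of $S^d_{\pi-2\delta}$, including the boundary case of measures whose support has diameter exactly $\pi - 2\delta$, and that continuity holds uniformly there; this follows from the $1$-Lipschitz property of each integrand and does not depend on the diameter bound.
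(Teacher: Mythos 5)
Your proposal follows the same overall strategy as the paper's proof: one odd real-valued coordinate per covering point $x_j$, assembled into an odd map to $\R^n$ that misses the origin because the antipodal balls $B_\delta(\tilde x_j)$ and $B_\delta(-\tilde x_j)$ in $S^d$ are separated by distance exceeding $\pi-2\delta$, and then normalized to land in $S^{n-1}$. The only difference is how the coordinates are realized: the paper takes $f_j(\mu)$ to be the Wasserstein distance from $\mu$ to the set of measures whose support avoids $B_\delta(x_j)$, defined on the half of $S^d_{\pi-2\delta}$ where $\supp(\mu)\cap B_\delta(-x_j)=\emptyset$ and extended by oddness, whereas you take the linear extension of the explicitly odd Lipschitz function $\tilde x\mapsto\phi_j(\tilde x)-\phi_j(-\tilde x)$. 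Your version is arguably cleaner, since continuity and oddness follow at once from the standard fact that bounded (Lipschitz) functions extend continuously and linearly to the metric thickening, with no piecewise well-definedness check.

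One over-claim to repair: from the covering hypothesis and the separation argument you may conclude that $\supp(\mu)$ \emph{meets} some $B_\delta(\tilde x_j)$ (after choosing a sign) and is therefore \emph{disjoint from} $B_\delta(-\tilde x_j)$, but not that $\supp(\mu)\subset B_\delta(\tilde x_j)$ --- the support can be scattered across balls centered at several different $\pm\tilde x_i$, each point lying in some ball but no single ball containing all of them. This does not break your argument: disjointness of $\supp(\mu)$ from $B_\delta(-\tilde x_j)$ already gives $\phi_j(-\tilde x)=0$ for every $\tilde x\in\supp(\mu)$, so $g_j(\mu)=\int\phi_j\,d\mu\ge 0$, and since $\mu$ is finitely supported it places positive mass on a point where $\phi_j>0$, whence $g_j(\mu)>0$. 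State the weaker conclusion and the proof closes as written.
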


The following result about odd maps from metric thickenings into spheres, when combined with Theorem~\ref{thm:lower-bound}, implies Theorem~\ref{thm:main}; see Section~\ref{sec:cohom-index} for the proof. 

\begin{theorem}
\label{thm:upper-bound}
Let $X$ be a metric space with a free $\Z/2$-action, and let $\delta > 0$.
Let $V$ be an $n$-dimensional real vector space of continuous odd functions $f \colon X \to \R$, such that for every $A \subset X$ of diameter at most~$\delta$ there is an $f \in V$ such that $f|_A$ is strictly positive.
Then there is an odd map $X_\delta \to S^{n-1}$.
\end{theorem}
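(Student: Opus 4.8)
The plan is to build the odd map $X_\delta \to S^{n-1}$ explicitly, by integrating the odd functions in $V$ against measures. First I would fix a basis $f_1, \dots, f_n$ of $V$ and define $\Phi \colon X_\delta \to \R^n$ by $\Phi(\mu) = \bigl(\int_X f_1\, d\mu, \dots, \int_X f_n\, d\mu\bigr)$. The theorem then reduces to three claims about $\Phi$: that it is continuous for the optimal transport metric on $X_\delta$; that it is $\Z/2$-equivariant for the antipodal action on $\R^n$; and, crucially, that it never takes the value $0$. Granting all three, the normalization $\mu \mapsto \Phi(\mu)/\|\Phi(\mu)\|$ is the desired continuous odd map $X_\delta \to S^{n-1}$.

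Equivariance is immediate. Writing the involution on $X$ as $x \mapsto \bar x$, the induced $\Z/2$-action on $X_\delta$ is pushforward of measures, $\mu \mapsto \bar\mu$, and since each $f_i$ is odd we get $\int_X f_i\, d\bar\mu = \int_X f_i(\bar x)\, d\mu(x) = -\int_X f_i\, d\mu$, so $\Phi(\bar\mu) = -\Phi(\mu)$. For continuity I would argue that $\mu \mapsto \int_X f\, d\mu$ is continuous on $X_\delta$ for every continuous $f$: near a fixed $\mu_0$, the diameter bound of $\delta$ together with a small transport budget confines the supports of nearby measures to a fixed bounded neighborhood of the finite set $\supp\mu_0$ (a point of such a support lies within a small distance of $\supp\mu_0$, hence the whole support lies within $\delta$ plus that distance of it), after which convergence in the transport metric forces weak convergence and hence convergence of integrals of continuous functions. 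This is essentially the standard fact from the metric thickening literature that continuous functions on $X$ induce continuous functions on $X_\delta$, so in the write-up I would likely just invoke it.

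The heart of the argument is non-vanishing, and this is where the hypothesis on $V$ is used. Suppose $\Phi(\mu) = 0$ for some $\mu \in X_\delta$; then $\int_X f\, d\mu = 0$ for every $f \in V$. Let $A = \supp\mu$, which is finite of diameter at most $\delta$. By hypothesis there is some $f \in V$ with $f|_A$ strictly positive, and then $\int_X f\, d\mu = \sum_{a \in A} f(a)\,\mu(\{a\}) > 0$, since $\mu$ is a probability measure assigning positive mass to at least one point of $A$ while $f$ is positive throughout $A$. This contradicts $\int_X f\, d\mu = 0$, so $\Phi$ is nowhere zero.

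I expect the only genuinely delicate step to be the continuity of $\Phi$: equivariance and non-vanishing are short formal arguments (the latter is just positivity of an integral of a positive function against a probability measure), whereas continuity requires the bookkeeping that keeps the supports of nearby measures in a fixed bounded region, after which one is on familiar ground with Wasserstein-type metrics. If $X$ is not assumed to have compact balls, this is the place where one must be slightly careful — or simply cite the corresponding continuity statement already available for metric thickenings.
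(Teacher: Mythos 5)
Your proposal is correct and follows essentially the same route as the paper: fix a basis of $V$, integrate against $\mu$ to get $\Phi\colon X_\delta\to\R^n$, use the hypothesis on supports of measures to see $\Phi$ misses the origin, and normalize. The paper phrases non-vanishing directly ($\langle z,\Phi(\mu)\rangle>0$ for a suitable $z$) rather than by contradiction, and it leaves continuity to the previously cited linear-extension lemma, exactly as you suggest doing.
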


After proving the results outlined above, we will derive applications of Theorem~\ref{thm:main}.
Some of these are generalizations of the familiar consequences of the Borsuk--Ulam theorem, while others are new applications that require the added generality of Theorem~\ref{thm:main} to produce interesting results.
Applications of the latter kind are the following:
\vspace{2mm}
\begin{compactenum}[(i)]
\item A well-studied problem concerns generic manifold embeddings, that is, if a manifold $M$ embeds into~$\R^d$, does it embed in a particularly generic way, for example, such that tangent lines at different points are skew~\cite{ghomi2008totally} or non-parallel~\cite{harrison2020},
or such that any $k$ pairwise distinct points map to linearly independent points~\cite{blagojevic2020}.
Here we show that for embeddings of spheres into $\R^d$ a notion of genericity of embeddings is governed by the size of projective codes; see Section~\ref{sec:convexity}.
\item Simonyi and Tardos~\cite{simonyi2006local} established topological lower bounds for the circular chromatic number of a graph in terms of the topology of its box complex.
Theorem~\ref{thm:main} is already interesting in the case $d=1$ of the circle~$S^1$, where we derive the Simonyi--Tardos result as a consequence, thereby giving a new proof of their result that provides new insights into why topological bounds of the circular chromatic number depend on the parity of the coindex of the box complex; see Section~\ref{sec:chromatic}.
\end{compactenum}
\vspace{2mm}
For two of the usual consequences of the Borsuk--Ulam theorem, we prove generalizations to possibly ``overdetermined'' cases.
These consequences are:
\vspace{2mm}
\begin{compactenum}
\item[(iii)] The Ham Sandwich theorem asserts that any $d$ continuous probability measures on $\R^d$ can be simultaneously bisected by an affine hyperplane.
We prove a generalization of this result to the case of $n \ge d$ probability measures, where we have to relax the notion of bisection; see Section~\ref{sec:ham}.
\item[(iv)] In any covering of $S^n$ by $n+1$ closed sets, one of these sets contains a pair of antipodal points.
We generalize this result by showing that this remains true for coverings of $S^n$ by more than $n+1$ sets, provided that every subset of $S^n$ of a certain diameter appears in at least one of those sets; see Section~\ref{sec:covering}.
\end{compactenum}

\section{Context}

We begin with some preliminary material, definitions, and lemmas.

\subsection{Structural results for zeros of odd maps}\label{ssec:zeros}

Let $X$ and $Y$ be spaces with $\Z/2$-actions, that is, both $X$ and $Y$ are equipped with a continuous involution, which we will denote $x \mapsto -x$ for $x\in X$ and similarly $y \mapsto -y$ for $y \in Y$.
For the sphere $S^d$ and for Euclidean space $\R^d$ we always fix the antipodal $\Z/2$-action.
We will refer to any $\Z/2$-equivariant map $f \colon X\to Y$ (i.e., $f(-x) = -f(x)$ for all $x\in X$) as an \emph{odd map}.
The central result about zeros of odd maps is the Borsuk--Ulam theorem~\cite{borsuk1933}:

\begin{theorem}
Any continuous odd map $f\colon S^d \to \R^d$ has a zero.
Equivalently, there is no continuous odd map $S^d \to S^{d-1}$.
\end{theorem}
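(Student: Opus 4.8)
The plan is to prove the second (equivalent) formulation: there is no continuous odd map $S^d \to S^{d-1}$. The equivalence with the first is routine — if $f\colon S^d\to\R^d$ is odd and never vanishes, then $x\mapsto f(x)/\|f(x)\|$ is a continuous odd map $S^d\to S^{d-1}$; conversely a continuous odd map $S^d\to S^{d-1}\subset\R^d$ is an odd map to $\R^d$ with no zero. So it suffices to treat the sphere-to-sphere version.

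Suppose, for contradiction, that $f\colon S^d\to S^{d-1}$ is continuous and odd. Since $f$ intertwines the antipodal involutions, it descends to a continuous map $\bar f\colon \R P^d\to \R P^{d-1}$ on quotients; moreover a direct check shows the map $x\mapsto ([x],f(x))$ is a homeomorphism from $S^d$ onto the pullback of the double cover $S^{d-1}\to\R P^{d-1}$ along $\bar f$, so that $\bar f^{\,*}$ of this double cover is (isomorphic to) the double cover $S^d\to\R P^d$. I would then pass to mod-$2$ cohomology, using the standard computation $H^*(\R P^m;\Z/2)\cong \Z/2[x_m]/(x_m^{m+1})$, where $x_m$ is the degree-one generator and coincides with the first Stiefel--Whitney class $w_1$ of the tautological double cover of $\R P^m$. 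Naturality of $w_1$ together with the pullback identification above gives $\bar f^{\,*}(x_{d-1}) = x_d$, and since $\bar f^{\,*}$ is a ring homomorphism, taking $d$-th powers yields $\bar f^{\,*}(x_{d-1}^{\,d}) = x_d^{\,d}$. But $x_{d-1}^{\,d}=0$ in $H^d(\R P^{d-1};\Z/2)=0$, whereas $x_d^{\,d}\neq 0$ in $H^d(\R P^d;\Z/2)\cong\Z/2$ — a contradiction.

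The one step needing care is the identification $\bar f^{\,*}(x_{d-1}) = x_d$, i.e.\ that an odd map pulls the degree-one mod-$2$ class back to the degree-one mod-$2$ class. This follows from functoriality of the classifying map of a $\Z/2$-bundle: the double cover $S^m\to\R P^m$ is classified by a map $c_m\colon \R P^m\to B\Z/2=\R P^\infty$ that is an isomorphism on $H^1(-;\Z/2)$; equivariance of $f$ says exactly that $c_{d-1}\circ\bar f$ classifies the same double cover over $\R P^d$ as $c_d$, hence $c_{d-1}\circ\bar f\simeq c_d$, and pulling back the generator of $H^1(\R P^\infty;\Z/2)$ through this homotopy-commutative triangle gives the claim.

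As an alternative one can argue by induction on $d$: the base case $d=1$ is immediate since $S^1$ is connected while $S^0$ is not; for the inductive step one restricts a hypothetical odd $f\colon S^d\to S^{d-1}$ to the equatorial $S^{d-1}$, notes that this restriction extends over a hemisphere $\cong D^d$ and is therefore null-homotopic of degree $0$, and contradicts the lemma that every odd self-map of $S^{d-1}$ has odd degree (itself provable by the same mod-$2$ cohomology computation, or by a parallel induction). I would present the cohomological proof as the main argument, since it is the shortest and is closest in spirit to the index-theoretic reasoning used later in the paper.
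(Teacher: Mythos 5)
The paper does not prove this statement: it is the classical Borsuk--Ulam theorem, cited from Borsuk's 1933 paper and used as a black box throughout (e.g.\ in the proofs of Corollary~\ref{cor:main} and Theorem~\ref{thm:chromatic-cone}). So there is no in-paper argument to compare against; your proposal must stand on its own, and it does. The reduction between the two formulations is correct, the identification of $S^d$ with the pullback of the double cover $S^{d-1}\to\R P^{d-1}$ along $\bar f$ is the right way to encode oddness, and the contradiction $\bar f^{\,*}(x_{d-1}^{\,d})=x_d^{\,d}\neq 0$ versus $x_{d-1}^{\,d}=0\in H^d(\R P^{d-1};\Z/2)$ is the standard characteristic-class proof. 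Your justification of $\bar f^{\,*}(x_{d-1})=x_d$ via classifying maps is exactly the mechanism behind the cohomological index recalled in Section~\ref{sec:coh} of the paper (the Stiefel--Whitney height), so your argument is in fact a special case of the statement $\ind(S^d)=d$ together with ``an odd map $X\to S^n$ forces $\ind(X)\le n$.'' The only cosmetic caveat is the degenerate case $d=1$, where $H^1(\R P^0;\Z/2)=0$ has no ``generator''; the contradiction still reads correctly ($0$ cannot pull back to the nonzero class $x_1$), but it is worth a parenthetical remark, or one can simply dispose of $d=1$ by the connectivity argument you give in your alternative. The inductive alternative (restriction to the equator is null-homotopic yet must have odd degree) is also correct as sketched, with the odd-degree lemma being the genuine content there.
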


Bourgin~\cite[Theorem~4A]{bourgin1955some} and Yang~\cite{yang1954theorems} proved generalizations of this theorem asserting that for any continuous odd map $f\colon S^d \to \R^n$ with $1\le n \le d$, the preimage $f^{-1}(0)$ contains an antipodally symmetric $(d-n)$-dimensional cycle, which when mapped into the corresponding projective space $\R P^d$, is a non-zero element of $H_{d-n}(\R P^d;\Z/2)$.
From Theorem~\ref{thm:main} we can derive a Borsuk--Ulam theorem for the complementary case $n > d$, as explained in Section~\ref{sec:convexity}.
This improves on an earlier result of the authors in~\cite{ABF} that bridges applied, quantitative, and equivariant topology.

To explain the connection between Borsuk--Ulam theorems and projective codes, we say that a set $Y \subset \R^n$ is a \emph{Carath\'eodory subset} if its convex hull captures the origin in $\R^n$, i.e.\ if $0 \in \conv(Y)$.
In particular, any set $Y$ that contains antipodal points $y_0, -y_0 \in Y$, such as the image of an odd map $f\colon S^d \to \R^n$, is trivially a Carath\'eodory subset.
While the image of a continuous odd map $f\colon S^d\to\R^n$ may miss the origin, we will explain in Section~\ref{sec:convexity} how Theorem~\ref{thm:lower-bound} implies
that for some set $A \subset S^d$ of diameter strictly less than~$\pi$, the restricted image $f(A)$ is still a Carath\'eodory subset.

In order to recover the topological lower bounds for the circular chromatic number of a graph due to Simonyi and Tardos~\cite{simonyi2006local} in Section~\ref{sec:chromatic}, we will need an auxiliary result from~\cite{ABF}: Let
\[\SM_{2k}(t) = \left(\cos t, \sin t, \cos(3t), \sin(3t), \dots, \cos((2k-1)t), \sin((2k-1)t)\right)\]
be the \emph{symmetric trigonometric moment curve}.
We may think of $\SM_{2k}$ as a map $S^1 \to \R^{2k}$ under the identification $S^1=\R/2\pi\Z$, with the antipodal action specified by $t\mapsto t+\pi$.
Observe that $\SM_{2k}$ is an odd map.
Furthermore, the preimage of any Carath{\'e}odory subset of the image of $\sm_{2k}$ must have sufficiently large diameter in the circle $S^1$:

\begin{lemma}[{\cite[Theorem~5]{ABF}}]
\label{lem:sm}
Let $X \subset S^1$ be a set with diameter strictly less than~$\frac{2\pi k}{2k+1}$.
Then the convex hull of $\SM_{2k}(X)$ does not contain the origin. 
\end{lemma}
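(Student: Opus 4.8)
The plan is to prove the contrapositive by exhibiting a separating hyperplane. A linear functional on $\R^{2k}$ composed with $\SM_{2k}$ is exactly an odd trigonometric polynomial $p(t)=\sum_{j=1}^{k}\bigl(a_j\cos((2j-1)t)+b_j\sin((2j-1)t)\bigr)$ using only the harmonics $1,3,\dots,2k-1$; call these \emph{admissible}. If some admissible $p$ is strictly positive on $X$, then $0\notin\conv(\SM_{2k}(X))$. So, via Carath\'eodory's theorem, it suffices to show: whenever $t_1,\dots,t_r\in S^1$ with $r\le 2k+1$ and no two antipodal (else they are at distance $\pi$) satisfy $0=\sum_i\lambda_i\SM_{2k}(t_i)$ with all $\lambda_i>0$ — i.e.\ \emph{no} admissible $p$ is positive on all the $t_i$ — then two of the $t_i$ are at distance at least $\pi-\tfrac{\pi}{2k+1}=\tfrac{2\pi k}{2k+1}$.

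\textbf{Constructing the separating polynomial.} List the $2r$ points $\{t_i\}\cup\{t_i+\pi\}$ cyclically and mark them $+$ on the $t_i$ and $-$ on the $t_i+\pi$; this word has $2B$ maximal constant blocks, $B$ of which are $+$-blocks. I claim that if $B\le 2k-1$ one can build an admissible $p$ that is positive on every $t_i$, a contradiction. Put one node $\zeta_\ell$ in each of the $B$ arcs where the word passes from $+$ to $-$, and pick $m:=\tfrac{(2k-1)-B}{2}\ge 0$ auxiliary points $\eta_1,\dots,\eta_m$ inside $+$-blocks, none equal to a $t_i$. Imposing $p(\zeta_\ell)=0$ and $p(\eta_j)=p'(\eta_j)=0$ amounts to $B+2m=2k-1$ linear conditions on the $2k$-dimensional space of admissible polynomials (all antipodal conditions being automatic from oddness), so a nonzero solution $p$ exists. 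A nonzero admissible $p$ has at most $2(2k-1)=4k-2$ zeros counted with multiplicity; but $p$ already has the $2B$ simple zeros $\zeta_\ell,\zeta_\ell+\pi$ and the $2m$ double zeros $\eta_j,\eta_j+\pi$, totalling $2B+4m=4k-2$. Hence these are \emph{all} of its zeros, with exactly those multiplicities, so $p$ changes sign precisely at the $B$ prescribed block boundaries and their antipodes; thus $p$ has a constant sign on each block, and after replacing $p$ by $-p$ if needed, $p>0$ at every $t_i$.

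\textbf{Why $B$ is small.} Two combinatorial facts then finish the proof. First, $B$ is always odd: the difference word $d(i)=\mathbf{1}[\text{sign changes between points }i-1\text{ and }i]$ is $r$-periodic on the $2r$ points (the antipodal involution shifts the cyclic sequence by $r$), and the XOR of $d$ over one period telescopes to ``(sign at a point)\,XOR\,(sign at its antipode)'' $=1$, so each period has an odd number of block boundaries, i.e.\ $B$ is odd. Second — and here the constant $\tfrac{\pi}{2k+1}$ enters, and the construction does \emph{not} apply — the only way to have $B>2k-1$ is $B=r=2k+1$, so each of the $2k+1$ gaps between consecutive $t_i$ contains exactly one antipode $t_{\tau(i)}+\pi$ for a bijection $\tau$; letting $u_i\in(0,g_i)$ be the distance from $t_i$ to that antipode ($g_i$ the gap length, necessarily $<\pi$), reindexing $\sum_i t_{\tau(i)}=\sum_i t_i$ gives $\sum_i u_i\equiv(2k+1)\pi\equiv\pi\pmod{2\pi}$, hence $\sum_i u_i=\pi$ since $0<\sum u_i<\sum g_i=2\pi$; but $\dist(t_i,t_{\tau(i)})=\pi-u_i$, so the index with $u_i\le\tfrac{1}{2k+1}\sum u_i=\tfrac{\pi}{2k+1}$ produces a pair at distance $\ge\pi-\tfrac{\pi}{2k+1}$. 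Therefore, if all pairwise distances are $<\tfrac{2\pi k}{2k+1}$ then $B\le 2k-1$, the construction supplies a separating $p$, and $0\notin\conv(\SM_{2k}(X))$; the equally spaced points $t_j=\tfrac{2\pi j}{2k+1}$ (which sum to $0$ under $\SM_{2k}$ since each coordinate is a nontrivial sum of $(2k+1)$-th roots of unity, and have diameter exactly $\tfrac{2\pi k}{2k+1}$) show the bound is sharp.

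\textbf{Main obstacle.} The delicate step is realizability: forcing the low-dimensional admissible $p$ to carry exactly the prescribed sign pattern with no spurious crossings. The device that makes it go through is the multiplicity bookkeeping — padding with the double zeros $\eta_j$ so that the forced zeros already saturate the bound $4k-2$, leaving $p$ no room for extra sign changes — and checking this, together with the routine placement of the auxiliary points in distinct allowable arcs, is where the real work lies.
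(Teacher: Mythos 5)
The paper does not actually prove this lemma: it is imported verbatim from \cite[Theorem~5]{ABF}, so there is no in-paper argument to compare against, and your proposal must stand on its own. It does. Your reduction is the expected one (a hyperplane separating $0$ from $\conv(\SM_{2k}(X))$ is precisely a ``raked'' trigonometric polynomial with frequencies $1,3,\dots,2k-1$ strictly positive on $X$, and Carath\'eodory cuts you down to $r\le 2k+1$ points), but the way you force the sign pattern is a self-contained zero-counting argument, and its three load-bearing steps all check out: (a) the saturation trick — $B$ simple zeros at the $+\!\to\!-$ transitions plus $m=\frac{(2k-1)-B}{2}$ double zeros, doubled by oddness to exactly $4k-2$ forced zeros, so a nonzero solution in the $2k$-dimensional space has no room for unprescribed sign changes; (b) the parity of $B$, via the antipodal involution acting as an order-preserving shift by $r$ so that the sign-change indicator is $r$-periodic and sums to $1$ mod $2$ over a period; (c) the alternating case $B=r=2k+1$, where $\sum_i u_i=\pi$ and pigeonhole produce a pair at distance $\ge\pi-\frac{\pi}{2k+1}$. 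Notably, your block invariant $B$ correctly handles the fact that for $k\ge 2$ a set of diameter less than $\frac{2\pi k}{2k+1}$ need \emph{not} lie in a single short arc, so an arc-based argument would not suffice. Two details you should write out explicitly in a polished version: first, that the antipodal map carries each $+\!\to\!-$ transition arc onto a $-\!\to\!+$ transition arc, so the $2B$ points $\zeta_\ell,\zeta_\ell+\pi$ occupy all $2B$ transition arcs one apiece — this is what makes ``exactly one block per sign-constant arc of $p$'' true; second, that the $\eta_j$ can be (and are) chosen pairwise distinct, non-antipodal to one another and to the $\zeta_\ell$, and distinct from all marked points, so the forced multiplicities genuinely total $4k-2$. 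Both are routine, but they are exactly where a degenerate configuration could otherwise hide.
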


For example, in the case $k=1$, Lemma~\ref{lem:sm} reduces to the statement that no subset of $S^1\subset \R^2$ of diameter less than $\frac{2\pi}{3}$ can contain the origin in its convex hull; this is Jung's Theorem in the plane~\cite[Lemma 2]{katz1983filling}.
This lemma can be seen as a structural result for zeros of raked trigonometric polynomials; see~\cite[Theorem~4]{ABF}.
We will need the following consequence of Lemma~\ref{lem:sm} in Section~\ref{sec:chromatic}:

\begin{lemma}
\label{lem:index}
Let $k \ge 1$ be an integer and let $\delta < \frac{2\pi k}{2k+1}$.
Then there is a continuous odd map $f\colon S^1_\delta \to S^{2k-1}$.
\end{lemma}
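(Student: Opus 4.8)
The plan is to deduce this from Theorem~\ref{thm:upper-bound} with $X = S^1$ (with its antipodal $\Z/2$-action) and target sphere $S^{n-1} = S^{2k-1}$, i.e.\ with $n = 2k$. To invoke that theorem I need an $n$-dimensional real vector space $V$ of continuous odd functions $S^1 \to \R$ with the property that for every $A \subset S^1$ of diameter at most $\delta$ there is an $f \in V$ with $f|_A$ strictly positive. The natural candidate is the span of the $2k$ coordinate functions of the symmetric trigonometric moment curve $\SM_{2k}$, namely
\[
V = \mathrm{span}\{\,t \mapsto \cos t,\ t\mapsto \sin t,\ t\mapsto\cos 3t,\ t\mapsto\sin 3t,\ \dots,\ t\mapsto\cos((2k-1)t),\ t\mapsto\sin((2k-1)t)\,\}.
\]
Each of these functions is odd under $t \mapsto t+\pi$ (since each frequency $2j-1$ is odd), so every element of $V$ is a continuous odd function on $S^1$. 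One should check $\dim V = 2k$: the functions $\cos((2j-1)t), \sin((2j-1)t)$ for $1 \le j \le k$ are linearly independent (distinct Fourier modes), so this is immediate.

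The key step is the positivity hypothesis. An element of $V$ is $f_v(t) = \langle v, \SM_{2k}(t)\rangle$ for some $v \in \R^{2k}$, and the condition ``there exists $f\in V$ with $f|_A > 0$'' is exactly ``there exists $v$ with $\langle v, \SM_{2k}(t)\rangle > 0$ for all $t \in A$,'' which by a standard separation argument is equivalent to $0 \notin \conv(\SM_{2k}(A))$ (using compactness of $\SM_{2k}(\overline{A})$, so WLOG $A$ closed; a closed convex set not containing the origin is strictly separated from it by a hyperplane through the origin). Now Lemma~\ref{lem:sm} says precisely that if $\diam(A) < \frac{2\pi k}{2k+1}$ then $0 \notin \conv(\SM_{2k}(A))$. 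Since $\delta < \frac{2\pi k}{2k+1}$, every $A$ of diameter at most $\delta$ satisfies this, so the positivity hypothesis of Theorem~\ref{thm:upper-bound} holds. (Here one should be slightly careful that diameter is measured in the arc-length metric on $S^1$, matching the convention in Lemma~\ref{lem:sm}; these are the same metric.)

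Applying Theorem~\ref{thm:upper-bound} then yields an odd map $(S^1)_\delta = S^1_\delta \to S^{2k-1}$, which is the desired conclusion. The main obstacle — really the only nontrivial point — is the equivalence between existence of a strictly positive functional on $\SM_{2k}(A)$ and the origin lying outside $\conv(\SM_{2k}(A))$, together with the reduction to closed $A$ so that Lemma~\ref{lem:sm} applies verbatim; everything else is bookkeeping (oddness of the basis functions, the dimension count, and matching conventions on the metric). If one wanted to avoid the closure subtlety entirely, one could note that $\diam(A) = \diam(\overline{A}) \le \delta < \frac{2\pi k}{2k+1}$, apply Lemma~\ref{lem:sm} to $\overline{A}$, and then restrict the resulting functional to $A$.
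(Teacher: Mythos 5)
Your proof is correct and, once Theorem~\ref{thm:upper-bound} is unfolded, it coincides with the paper's argument: the map $\Phi$ built there from your basis $V$ is exactly the linear extension $\mu \mapsto \int \SM_{2k}\,d\mu$, which the paper shows directly (via Lemma~\ref{lem:sm}) avoids the origin and then normalizes. The only difference is that you insert a round trip through a separating-hyperplane argument (converting ``$0 \notin \conv(\SM_{2k}(A))$'' into ``some element of $V$ is strictly positive on $A$'' and back), which is handled correctly via the closure of $A$ but is unnecessary if one extends $\SM_{2k}$ linearly and applies Lemma~\ref{lem:sm} to the support of each measure directly.
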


\begin{proof}
By Lemma~\ref{lem:sm} the map $\SM_{2k} \colon S^1 \to \R^{2k}$ extends linearly to an odd map $\widehat{f} \colon S^1_\delta \to \R^{2k}\setminus \{0\}$.
Then $f(x) = \frac{\widehat{f}(x)}{\|\widehat{f}(x)\|}$ is the desired map.
\end{proof}

The bound on $\delta$ in Lemma~\ref{lem:index} is optimal: there is a homotopy equivalence $S^1_\delta \simeq S^{2k+1}$ for $\frac{2\pi k}{2k+1} \le \delta < \frac{2\pi (k+1)}{2k+3}$~\cite{moy2022vietoris,adamaszek2017vietoris}, which can furthermore be made $\Z/2$-equivariant.

\subsection{Cohomological index}
\label{sec:coh}

Observe that Theorems~\ref{thm:lower-bound}, \ref{thm:covering}, and~\ref{thm:upper-bound} are statements regarding the existence of continuous odd maps into and out of spheres. 
Results along these lines are commonly phrased in terms of the cohomological index, which we now recall. 

Let $X$ be a metric space with a free $\Z/2$-action, that is, a fixed-point free involution.
Let $S^\infty = \cup_{n=0}^\infty S^n$, where each $S^n$ is the equator of $S^{n+1}$.
Up to $\Z/2$-equivariant homotopy 
(i.e., a homotopy that at every time commutes with the $\Z/2$-action) 
there is a unique odd map $X \to S^\infty$, which gives $X / (\Z/2) \to \R P^\infty$ on quotients.
This induces a map $H^*(\R P^\infty; \F_2) \to H^*(X / (\Z/2); \F_2)$ in cohomology with $\F_2$-coefficients, where $H^*(\R P^\infty; \F_2) \cong \F_2[t]$.
The smallest integer $n$ such that $t^{n+1}$ is in the kernel of this map is the \emph{cohomological index} (or Stiefel--Whitney height) of $X$ and denoted by~$\ind(X)$; see~\cite{matousek2003using}.
If there is an odd map $X \to S^n$, then $\ind(X) \le n$.
On the other hand, if there is an odd map $S^n \to X$ then $n\le \ind(X)$.
We refer to the largest $n$ such that there is an odd map $S^n \to X$ as the \emph{coindex} of~$X$, denoted by~$\coind(X)$, which by the previous sentence satisfies $\coind(X)\le\ind(X)$.

\subsection{Metric thickenings}\label{ssec:thickenings} Let $(X,d)$ be a metric space and let $\delta \ge 0$.
The \emph{(Vietoris--Rips) metric thickening}~$X_\delta$ of $X$ at scale~$\delta$ is the set of probability measures~$\mu$ in $X$ whose support $\supp(\mu)$ is finite and has diameter at most~$\delta$, equipped with the $1$-Wasserstein metric of optimal transport.
See~\cite{AAF} for an introduction.
In particular, we can write elements $\mu \in X_\delta$ as convex combinations $\mu = \sum_{k=1}^n \lambda_ix_i$, where $\lambda_i \ge 0$, $\sum_{i=1}^n \lambda_i = 1$, and $x_1, \dots, x_n \in X$ with $d(x_i,x_j) \le \delta$ for all $1\le i,j\le n$; see Figure~\ref{fig:VRm}. 
Here we identify each point $x_i \in X$ with the Dirac measure at~$x_i$.
Notice that in this way $X$ is naturally a subspace of~$X_\delta$, embedded isometrically.

\begin{figure}[h]
\includegraphics[width=0.9in]{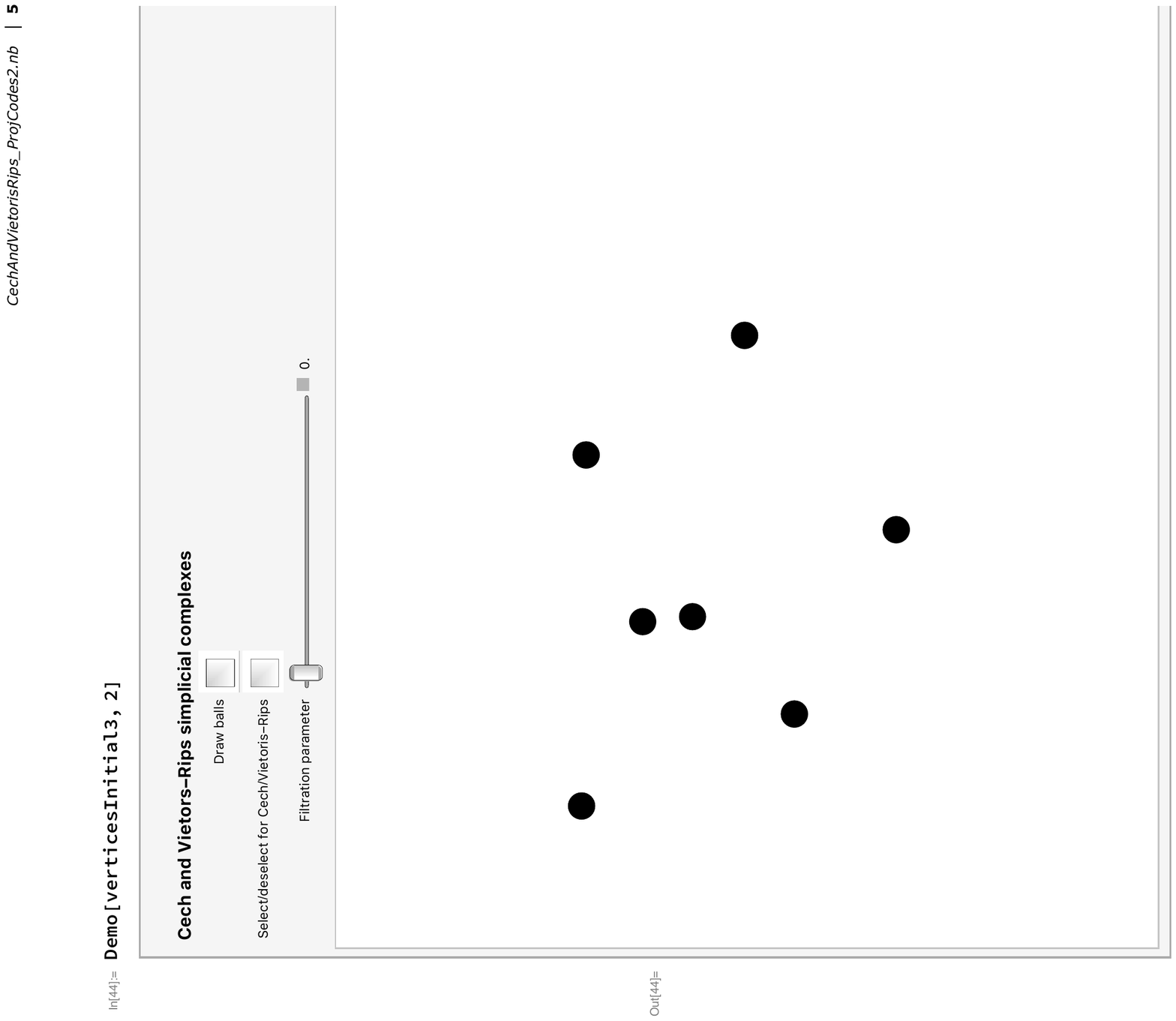}
\hspace{0.3in}
\includegraphics[width=0.9in]{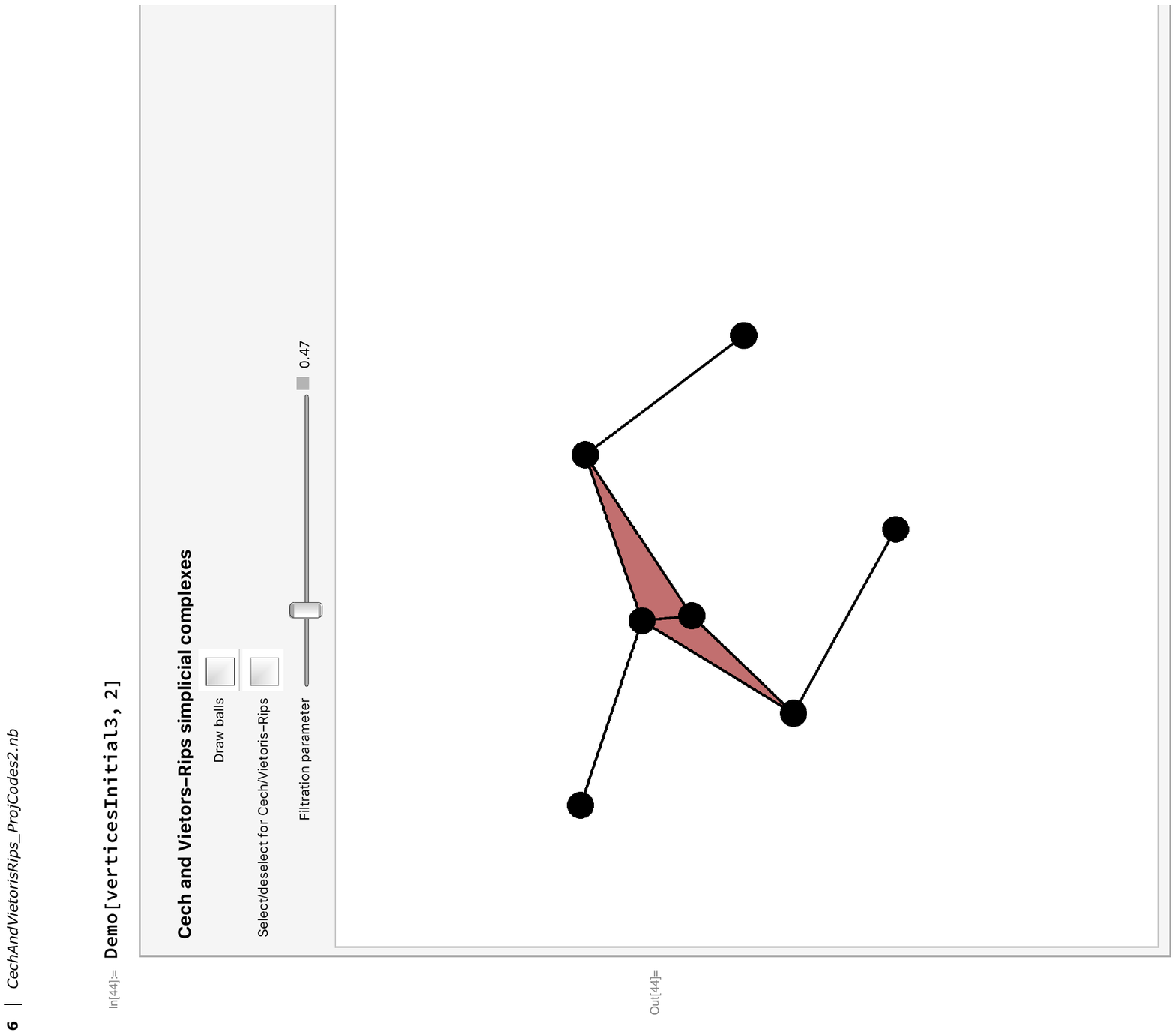}
\hspace{0.3in}
\includegraphics[width=0.9in]{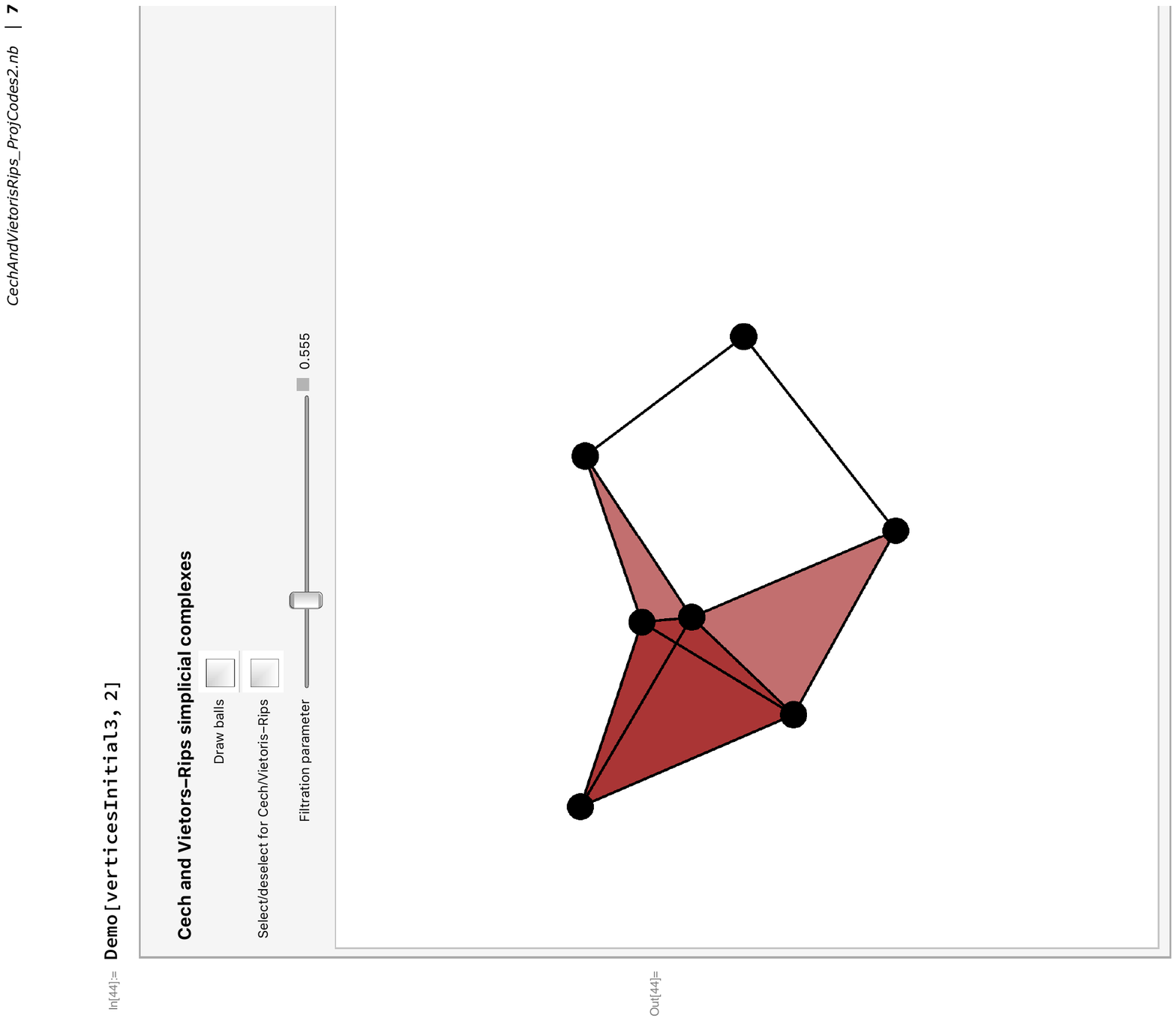}
\hspace{0.3in}
\includegraphics[width=0.9in]{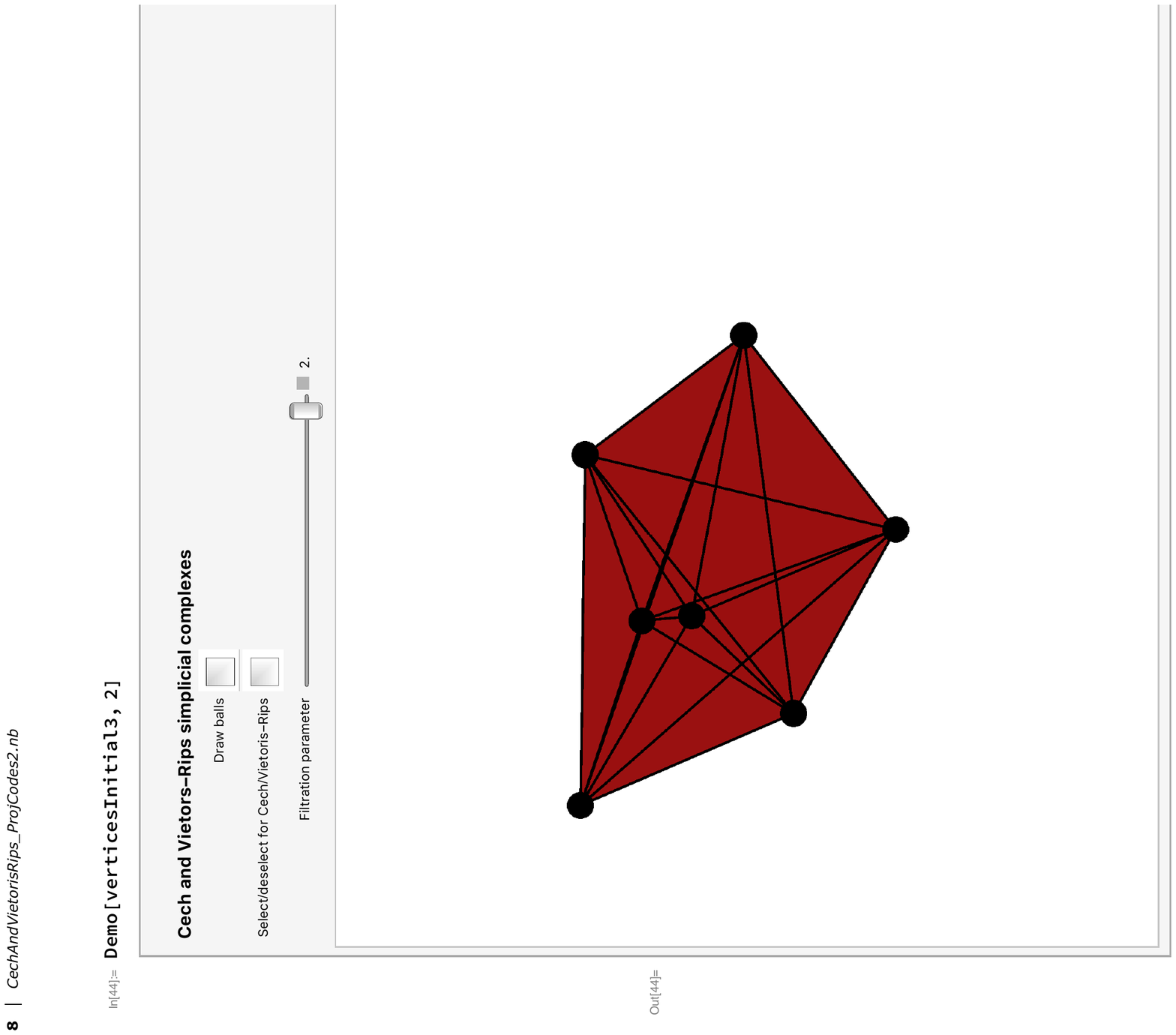}
\captionsetup{width=1\textwidth}
\caption{A metric space $X$ with 7 points, and its metric thickening $X_\delta$ at three different increasing values of $\delta>0$.
In this visualization, a Dirac delta measure supported on a point $x\in X$ is drawn as a black point, and a probability measure $\sum_{k=1}^n \lambda_ix_i$ with $d(x_i,x_j)\le \delta$ for all $i, j$ is represented by the point in the simplex $\{x_1,\ldots,x_n\}$ with barycentric coordinates $(\lambda_1,\ldots,\lambda_n)$.
}
\label{fig:VRm}
\end{figure}

Originally, metric thickenings were defined to introduce tools from metric geometry to the study of \emph{Vietoris--Rips complexes}, that is, the simplicial complex $\vr{X}{\delta}$ of all finite subsets of $X$ of diameter at most~$\delta$.
These complexes are ubiquitous in computational topology and topological data analysis~\cite{Carlsson2009}.
Despite their prevalence in these areas, the topology of Vietoris--Rips complexes and thickenings is not well-understood, unless the scale parameter $\delta$ is small~\cite{hausmann1995vietoris,Latschev2001,AAF} or the underlying metric space $X$ is simple~\cite{adamaszek2017vietoris,moy2022vietoris}.
See~\cite{HA-FF-ZV,AdamsMemoliMoyWang,MoyMasters} for relationships between Vietoris--Rips complexes and Vietoris--Rips metric thickenings in the context of applied topology.
Because metric thickenings parametrize the collection of subsets of a metric space of bounded diameter, they are the correct ``configuration space'' for studying such subsets.
Indeed, the topology of metric thickenings of spheres $S_\delta^d$ will play a key role in the proofs of our main results.

The choice to restrict the metric thickening $X_\delta$ to probability measures supported in finitely many points is essentially immaterial; this restriction is the easiest way to ensure that the $1$-Wasserstein distance between such measures is well-defined.
(Otherwise we could instead require that every probability measure on $X$ is inner regular, that is, for each open $U \subset X$ the measure $\mu(U)$ can be approximated arbitrarily closely by~$\mu(C)$, where $C \subset U$ is compact.)
Also, one could instead choose to equip the metric thickening $X_\delta$ with the $q$-Wasserstein metric for any $q\in [1,\infty)$.

Any map $f\colon X \to \R^n$ induces a natural map $f\colon X_\delta \to \R^n$ by $f(\mu) = \int_X f \ d\mu$, where for $\mu = \sum \lambda_ix_i$ the integral $\int_X f \ d\mu$ is simply the sum $\sum \lambda_if(x_i)$.
The extended map on $X_\delta$ is also denoted~$f$, since no confusion can arise: The value of $f(x)$ for $x\in X$ does not depend on whether we interpret $x$ as a point in $X$ or the Dirac measure centered at~$x$.
We call this naturally extended map $f\colon X_\delta \to \R^n$ the \emph{linear extension} of~$f$, and say that $f$ was \emph{linearly extended}.
If $f\colon X\to \R^n$ is continuous and bounded, then its linear extension $f\colon X_\delta \to \R^n$ is continuous~\cite[Lemma~5.2]{AAF}.

Suppose $X$ is equipped with a $\Z/2$-action.
Then the generator of $\Z/2$ defines an involution $\iota\colon X\to X$.
This extends to an involution $\iota\colon X_\delta \to X_\delta$ in a natural way: $\iota(\sum \lambda_ix_i) = \sum \lambda_i \iota(x_i)$.
Thus $X_\delta$ is a $\Z/2$-space as well.
If the action on $X$ is free, then the induced action on $X_\delta$ is free as well, provided that $d(x, \iota(x)) > \delta$ for all $x\in X$.

\section{The cohomological index of metric thickenings}\label{sec:cohom-index}

We will first prove lower bounds for the topology of metric thickenings in terms of the size of projective codes.
These lower bounds are in terms of the coindex of metric thickenings.
Algebraically, this is approximately captured by the (integer-valued $\Z/2$-)cohomological index; see Section~\ref{sec:coh} for a brief introduction.
For convenience, we restate these theorems from the introduction before giving the proofs; note that we now also refer to the cohomological index of a space. 

\begin{reptheorem}{thm:lower-bound}
Let $d$ and $n$ be positive integers, and let $\delta \ge \pi-p_{n+1}(\R P^d)$.
Then there is a continuous odd map $S^n \to S^d_\delta$; in particular, $n \le \coind(S^d_\delta) \le \ind(S^d_\delta)$.
\end{reptheorem}

\begin{proof}
We will construct an odd map $f\colon S^n \to S^d_\delta$.
Let $\alpha = p_{n+1}(\R P^d)$.
By compactness, we can find $x_1, \dots, x_{n+1} \in \R P^d$ that are pairwise at distance at least~$\alpha$.
Each $x_j$ lifts to two points $x_j^-$ and $x_j^+$ in~$S^d$.
These points are antipodes at distance~$\pi$, but the distance to any other point $x_i^\pm$ for $i \ne j$ is at most $\pi-\alpha$.
Indeed, the point $x_i^\pm$ lies on a shortest path connecting $x_j^+$ to~$x_j^-$.
Since its distance to $x_j^-$ is at least~$\alpha$, it is within a distance of $\pi-\alpha$ from~$x_j^+$.
Symmetrically, since its distance to $x_j^+$ is at least $\alpha$, it is within a distance of $\pi-\alpha$ from~$x_j^-$. 
	
We will think of the domain of the odd map $f\colon S^n \to S^d_\delta$ as the $\ell_1$ unit sphere in~$\R^{n+1}$, that is, as the boundary of the $(n+1)$-dimensional crosspolytope with vertices $\pm e_1, \dots, \pm e_{n+1}$.
Define $f$ on vertices by mapping $\pm e_j$ to the Dirac measure at~$x_j^\pm$.
Then extend linearly, that is, given a point $\sum_{i=1}^{n+1} \lambda_i \varepsilon_i e_i$ in the $\ell_1$ unit sphere for some choice of signs $\varepsilon_i \in \{-1,1\}$ and coefficients $\lambda_i \ge 0$ with $\sum_{i=1}^{n+1} \lambda_i = 1$, define $f(\sum_{i=1}^{n+1} \lambda_i \varepsilon_i e_i) = \sum_{i=1}^{n+1} \lambda_i \varepsilon_i f(e_i)$.
This is a measure supported on at most $n+1$ points whose pairwise distances are at most $\pi-\alpha \le \delta$.
\end{proof}

The proof above shows that a projective code $x_1, \dots, x_{n+1} \in \R P^d$, where any two points are at distance at least~$\alpha$, induces a linear map from the boundary of the $(n+1)$-dimensional crosspolytope $S^n \to S^d_\delta$ for $\delta \ge \pi-\alpha$.
Here $S^d_\delta$ is a subset of the vector space of signed measures on~$S^d$, so linearity is well-defined.
Conversely, any map $S^n \to S^d_\delta$ that is linear in this sense induces a projective code, where elements are at distance at least~$\pi-\delta$.
The cohomological index of $S^d_\delta$ provides an algebraic witness for the existence of odd maps $S^n \to S^d_\delta$, and thus the parametrized family of cohomological indices of $S^d_\delta$ as $\delta$ ranges over $(0, \pi)$ is a topological version of projective codes.

As remarked in the introduction,  coverings of projective space by metric balls dually provide upper bounds for the topology of metric thickenings; see Theorem~\ref{thm:covering}.
In other words, simple volume bounds for the size of projective codes apply more generally to bound the topology of metric thickenings.
A collection of $n$ metric $\delta$-balls in $\R P^d$ cannot be pairwise disjoint if the sum of their volumes exceeds the volume of~$\R P^d$.
This bound for projective codes applies more generally as an upper bound for the topology of metric thickenings of spheres.

\begin{reptheorem}{thm:covering}
Let $\delta > 0$.
Suppose there are $n$ points $x_1, \dots, x_n \in \R P^d$ such that every point $y \in \R P^d$ is strictly within distance $\delta$ from one of the~$x_j$.
Then there is an odd map $S^d_{\pi-2\delta} \to S^{n-1}$; in particular, $\ind(S^d_{\pi-2\delta}) \le n-1$.
\end{reptheorem}

\begin{proof}
We think of each $x_j \in \R P^d$ as a pair of antipodal points $\pm x_j$ in~$S^d$.
If $y \in S^d$ has distance less than $\delta$ from $x_j$ and $z \in S^d$ has distance less than $\delta$ from~$-x_j$, then $y$ and $z$ are at a distance exceeding $\pi-2\delta$.
Thus for any $\mu \in S^d_{\pi-2\delta}$ the support of $\mu$ does not intersect both $B_\delta(x_j)$ and~$B_\delta(-x_j)$.
Let $f_j \colon S^d_{\pi-2\delta} \to \R$ be the map that for $\mu \in S^d_{\pi-2\delta}$ with support disjoint from $B_\delta(-x_j)$ is defined as
\[\inf\{d(\mu,\nu)~:~\nu \in S^d_{\pi}, \  \supp(\nu) \cap B_\delta(x_j) = \emptyset\},\]
where the distance between measures is the 1-Wasserstein distance.
Extend this map to all of $S^d_{\pi-2\delta}$ by declaring that $f_j(-\mu) = -f_j(\mu)$.
It is easily verified that this is a well-defined, continuous odd map.
The map $f\colon S^d_{\pi-2\delta} \to \R^n$ given by $f = (f_1, \dots, f_n)$ is odd and misses the origin.
Thus $\mu \mapsto \frac{f(\mu)}{\|f(\mu)\|}$ is an odd map $S^d_{\pi-2\delta} \to S^{n-1}$.
\end{proof}

The next theorem, when combined with Theorem~\ref{thm:lower-bound}, will allow us to prove our main result, Theorem~\ref{thm:main}.

\begin{reptheorem}{thm:upper-bound}
Let $X$ be a metric space with a free $\Z/2$-action, and let $\delta > 0$.
Let $V$ be an $n$-dimensional real vector space of continuous odd functions $f \colon X \to \R$, such that for every $A \subset X$ of diameter at most~$\delta$ there is an $f \in V$ such that $f|_A$ is strictly positive.
Then there is an odd map $X_\delta \to S^{n-1}$; in particular, $\ind(X_\delta) \le n-1$.
\end{reptheorem}

\begin{proof}
Fix a basis $B$ of~$V$, so $|B|=n$.
For $\mu \in X_\delta$ and $f \in V$ let $f(\mu) = \int_X f \ d\mu$.
Define $\Phi \colon X_\delta \to \R^n$ by $\Phi(\mu) = (f(\mu))_{f\in B}$.
Every element of $V$ is a linear combination of $B$ and thus a function of the form $x \mapsto \langle z, \Phi(x) \rangle$ for some $z\in \R^n$.
Here $x$ denotes the Dirac measure~$\delta_x$.
Let $\mu \in X_\delta$ be arbitrary, and denote its support by~$A$.
There is some $f \in V$ such that $f|_A$ is strictly positive.
Thus $\langle z, \Phi(x) \rangle > 0$ for some $z \in \R^n$ and every $x \in A$.
In particular, $\langle z, \Phi(\mu)\rangle > 0$ and thus $\Phi$ maps to $\R^n \setminus \{0\} \simeq S^{n-1}$.
\end{proof}

\begin{proof}[Proof of Theorem~\ref{thm:main}]
Let $V$ be an $n$-dimensional vector space of continuous odd maps $S^d \to \R$.
We must show that there is a set $A \subset S^d$ of diameter at most $\delta:=\pi-p_{n+1}(\R P^d)$ such that for every $f \in V$ the restriction $f|_A$ is not strictly positive.

Theorem~\ref{thm:lower-bound} implies $\ind(S^d_\delta) \ge n$.
If there were no such set $A\subset S^d$, then for all sets of diameter at most $\delta$ there would be some $f\in V$ such that the restriction $f|_A$ is strictly positive, so Theorem~\ref{thm:upper-bound} would imply $\ind(S^d_\delta) \le n-1$, a contradiction.
\end{proof}

\begin{remark}\label{rmk:gen-BU}
Instead of $\delta=\pi-p_{n+1}(\R P^d)$ we could have phrased Theorem~\ref{thm:main} for the smallest $\delta$ such that the metric thickening $S^d_\delta$ has cohomological index at least~$n$, i.e., such that $\ind(S^d_\delta) \ge n$.
For $n = d$, this $\delta$ is zero, since $S^d$ is a subspace of $S^d_\delta$ for any $\delta \ge 0$.
In the case $n=d$ and $\delta =0$, this stronger version of Theorem~\ref{thm:main} says that given any $d$-dimensional vector space $V$ of continuous odd maps $S^d\to \R$, there is a point $x\in S^d$ (i.e., a set of diameter zero) such that $f(x)=0$ (since $f\in V$ implies $-f\in V$) for all $f\in V$.
By choosing $V$ to be generated by the $d$ coordinate functions of an odd map $g\colon S^d\to \R^d$, we hence obtain the standard Borsuk--Ulam Theorem stating there exists some $x\in S^d$ with $g(x)=0$.
\end{remark}

To conclude this section, we give an example of explicit bounds that may be derived from Theorem~\ref{thm:lower-bound}.
For general upper and lower bounds for the size of projective codes, see Levenshtein~\cite{levenshtein1998}.
For our explicit example, we are interested in projective codes whose size exceeds the dimension by a fixed integer.
Bukh and Cox~\cite{bukh2020} study the behaviour of $p_{d+k}(\R P^d)$ for fixed $k$ as $d$ grows.
Among other results they show:

\begin{theorem}[Bukh and Cox~\cite{bukh2020}, Theorem~21] 
\label{thm:bc}
Let $k \ge 1$ be an integer.
Then 
\[p_{d-1+k}(\R P^{d-1}) \ge \arccos\left((1+o(1))\frac{2\sqrt{k+1}}{d}\right),\]
where $o(1)$ denotes a function that converges to $0$ as $d\to \infty$.
\end{theorem}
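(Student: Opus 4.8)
The plan is to prove Theorem~\ref{thm:bc} by writing down, for each large $d$, an explicit nearly orthogonal configuration of $n := d-1+k$ lines in $\R^{d}$; throughout $k$ is fixed and $d \to \infty$. Such a configuration is the same thing as unit vectors $x_1,\dots,x_n\in S^{d-1}$ with $|\langle x_i,x_j\rangle|$ small for $i\ne j$, and passing to the Gram matrix $G=(\langle x_i,x_j\rangle)_{i,j}$ turns the task into linear algebra: it suffices to produce a positive semidefinite $G\in\R^{n\times n}$ of rank at most $d$ with $G_{ii}=1$ for all $i$ and $|G_{ij}|\le(1+o(1))\tfrac{2\sqrt{k+1}}{d}$ for $i\ne j$. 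Indeed any factorization $G=Y^\top Y$ with $Y\in\R^{d\times n}$ has unit columns $x_1,\dots,x_n\in\R^d$ realizing these inner products, whence $p_{d-1+k}(\R P^{d-1})=\arccos\big(\max_{i\ne j}|\langle x_i,x_j\rangle|\big)\ge\arccos\big((1+o(1))\tfrac{2\sqrt{k+1}}{d}\big)$ because $\arccos$ is decreasing. Writing $G=I-E$, the requirements become: $E$ is symmetric with zero diagonal, its largest eigenvalue equals $1$ with multiplicity at least $n-d=k-1$ (this is what forces $\operatorname{rank}G\le d$, while $\lambda_{\max}(E)\le 1$ is exactly $G\succeq 0$), and $|E_{ij}|\le(1+o(1))\tfrac{2\sqrt{k+1}}{d}$ off the diagonal.

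Before constructing $E$ I would first pin down the correct order of magnitude, as this dictates the shape of any construction and measures how tight the target is. Since $\operatorname{tr}E=0$ while $\operatorname{tr}(E^2)=\sum_{i\ne j}E_{ij}^2\ge(k-1)\cdot 1^2$ (from the $k-1$ eigenvalues equal to $1$), the average of $E_{ij}^2$ over the $n(n-1)$ off-diagonal slots is at least $\tfrac{k-1}{n(n-1)}$, and a short refinement using $\operatorname{tr}E=0$ shows that no $E$ can do essentially better than $|E_{ij}|\sim\tfrac{\sqrt{k-1}}{d}$ in the worst case. Thus the target $(1+o(1))\tfrac{2\sqrt{k+1}}{d}$ is already within a factor $\approx 2$ of the best conceivable bound, and --- crucially --- achieving it requires a matrix $E$ whose off-diagonal entries are \emph{all} of essentially the same, near-extremal, magnitude. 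This equidistribution requirement is what makes the problem delicate: a matrix with ``random-looking'' off-diagonal entries of the right average size has some entry of size $\sim\sqrt{\log n}$ times larger, which for fixed $k$ and $n\to\infty$ overshoots the target by an unbounded factor, so no purely probabilistic construction can work.

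The construction of $E$ is where the real content lies, and the constant $2\sqrt{k+1}$ points to its ingredients: one should combine a small, near-optimal ``seed'' whose pairwise absolute inner products are $\approx\tfrac{1}{\sqrt{k+1}}$ --- for instance a real equiangular (or biangular) tight frame of roughly the maximal size in dimension about $k$; a real equiangular tight frame of the maximal size $\binom k2$ in $\R^{k-1}$ has common absolute inner product exactly $\tfrac{1}{\sqrt{k+1}}$ --- with an amplification that increases the number of coordinates to any large $n$ while preserving both the eigenvalue profile (top eigenvalue $1$ of multiplicity $k-1$) and the uniform smallness of the entries. The amplification must be structured rather than random: one natural route is to index the $n$ coordinates by a finite field or by the blocks of a combinatorial design and to define the entries of the underlying rank-$(k-1)$ projection through additive and multiplicative characters, so that each $E_{ij}$ becomes a bounded character sum. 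Setting $G:=I-E$ and factoring it then produces the desired lines.

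I expect the main obstacle to be exactly this uniform estimate $|E_{ij}|\le(1+o(1))\tfrac{2\sqrt{k+1}}{d}$, holding simultaneously over all $\binom n2$ pairs and all large $n$: the trivial Cauchy--Schwarz bound only gives $\tfrac{k-1}{d}$, a union bound over pairs loses a spurious $\sqrt{\log n}$, and the exact optimum $\tfrac{\sqrt{k-1}}{d}$ would require equiangular tight frames with far more vectors than can exist in a space of that dimension. Bridging this gap is what forces the use of an algebraic or design-theoretic construction together with a genuine equidistribution input --- a Weil-type character-sum bound, or an eigenvalue / second-moment computation tailored to the chosen family --- and the bulk of the proof consists in checking both that the constructed $E$ has the prescribed spectrum and that all of its off-diagonal entries are as small as claimed.
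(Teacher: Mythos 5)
First, a remark on what ``the paper's own proof'' means here: the paper does not prove Theorem~\ref{thm:bc} at all --- it is quoted from Bukh and Cox and used as a black box --- so your attempt has to be judged as a standalone proof. It is not one yet; there is a genuine gap. Your reduction is correct and cleanly stated: a code of $n=d-1+k$ lines in $\R P^{d-1}$ with pairwise angle at least $\arccos(\beta)$ is equivalent to a symmetric $n\times n$ matrix $E$ with zero diagonal, $\lambda_{\max}(E)\le 1$ with multiplicity at least $k-1$ at the top eigenvalue, and $|E_{ij}|\le\beta$ off the diagonal (the equality $p_n(\R P^{d-1})=\arccos(\max_{i\ne j}|\langle x_i,x_j\rangle|)$ should be a ``$\ge$'', but that is harmless). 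Your trace computation correctly identifies $\sqrt{k-1}/d$ as the Welch-type barrier, so the target is of the right order. But the entire content of the theorem is the \emph{existence} of such an $E$ with $\beta=(1+o(1))\tfrac{2\sqrt{k+1}}{d}$, and the proposal never produces one: the last two paragraphs name candidate ingredients (an equiangular tight frame seed, character sums, designs, a Weil-type bound) without writing down a matrix, and you explicitly defer ``the bulk of the proof'' --- verifying the spectrum and the uniform entry bound --- to an unspecified construction. Nothing in the text pins down where the constant $2\sqrt{k+1}$ would come from.

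Two further points make the gap more than cosmetic. The proposed seed --- a maximal real equiangular tight frame of $\binom{k}{2}$ vectors in $\R^{k-1}$ --- does not exist for general $k$ (maximal real ETFs are known only in sporadic dimensions), so even the starting object of the amplification scheme is unavailable in general; and no mechanism is given by which a seed with inner products $\tfrac{1}{\sqrt{k+1}}$ would, after amplification to $n$ coordinates, yield off-diagonal entries $(1+o(1))\tfrac{2\sqrt{k+1}}{d}$ rather than $\tfrac{C_k}{d}$ for some other constant $C_k$. To prove the statement you would need to exhibit a concrete family of Gram matrices (or vector configurations) and verify both the rank condition and the uniform bound on \emph{every} off-diagonal entry, as Bukh and Cox do; as written, you have a correct reformulation of the problem together with a plausibility argument, not a proof.
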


By combining Theorem~\ref{thm:bc} with Theorem~\ref{thm:lower-bound}, we immediately get the following consequence for the topology of Vietoris--Rips metric thickenings of spheres:

\begin{corollary}\label{cor:high-coindex}
Let $k\ge 1$ be an integer and $\delta > \frac{\pi}{2}$.
Then for $d = d(k, \delta)$ sufficiently large, there is a continuous odd map $S^{d+k} \to S^d_\delta$.
Indeed, we need to choose $d$ sufficiently large such that $\delta > \pi - \arccos\left((1+o(1))\frac{2\sqrt{k+2}}{d+1}\right)$, where $o(1)$ denotes a function that converges to $0$ as $d\to \infty$.
\end{corollary}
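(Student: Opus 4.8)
The plan is to chain Theorem~\ref{thm:lower-bound} with the asymptotic lower bound for projective codes in Theorem~\ref{thm:bc}, being careful to shift indices correctly. We want a continuous odd map $S^{d+k}\to S^d_\delta$, and Theorem~\ref{thm:lower-bound} tells us (taking $n=d+k$) that such a map exists as soon as $\delta\ge \pi - p_{d+k+1}(\R P^d)$. So it suffices to produce, for $\delta>\tfrac{\pi}{2}$ fixed and $d$ large, the inequality $\delta > \pi - p_{d+k+1}(\R P^d)$, i.e.\ $p_{d+k+1}(\R P^d) > \pi - \delta$.

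First I would rewrite $p_{d+k+1}(\R P^d)$ in the form appearing in Theorem~\ref{thm:bc}. That theorem is stated for $p_{d-1+k}(\R P^{d-1})$; substituting $d\mapsto d+1$ and $k\mapsto k+1$ turns the left-hand side into $p_{(d+1)-1+(k+1)}(\R P^{(d+1)-1}) = p_{d+k+1}(\R P^{d})$, which is exactly what we need. This yields
\[
p_{d+k+1}(\R P^d) \ge \arccos\!\left((1+o(1))\frac{2\sqrt{k+2}}{d+1}\right),
\]
where $o(1)\to 0$ as $d\to\infty$. Next I would observe that since $\arccos$ is continuous and $\arccos(0)=\tfrac{\pi}{2}$, the right-hand side converges to $\tfrac{\pi}{2}$ as $d\to\infty$; in particular, because $\delta>\tfrac{\pi}{2}$, for all sufficiently large $d$ we have $\arccos\!\left((1+o(1))\frac{2\sqrt{k+2}}{d+1}\right) > \pi-\delta$, and hence $p_{d+k+1}(\R P^d) > \pi-\delta$, i.e.\ $\delta > \pi - p_{d+k+1}(\R P^d)$. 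Feeding this into Theorem~\ref{thm:lower-bound} with $n=d+k$ produces the desired odd map $S^{d+k}\to S^d_\delta$, and the explicit threshold is precisely the stated condition $\delta > \pi - \arccos\!\left((1+o(1))\frac{2\sqrt{k+2}}{d+1}\right)$.

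There is essentially no hard step here — the argument is a bookkeeping exercise composing two already-proved results. The one place to be careful is the index shift: one must verify that the hypotheses of Theorem~\ref{thm:bc} (namely $k\ge 1$, so after the shift $k+1\ge 2\ge 1$) remain valid, and that the ``sufficiently large $d$'' in Corollary~\ref{cor:high-coindex} is compatible with the $o(1)$ in Theorem~\ref{thm:bc} (it is, since both are ``$d\to\infty$'' statements and we only ever need one strict inequality to hold eventually). The threshold $\delta>\tfrac{\pi}{2}$ is exactly the obstruction: below $\tfrac{\pi}{2}$ the $\arccos$ expression cannot exceed $\pi-\delta$ for large $d$, so this method gives nothing — consistent with the fact that $p_{d+k+1}(\R P^d)\to\tfrac{\pi}{2}$ and cannot asymptotically beat the orthoplex bound.
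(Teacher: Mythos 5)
Your proposal is correct and follows exactly the paper's own argument: substitute indices in Theorem~\ref{thm:bc} to get $p_{d+k+1}(\R P^d)\ge \arccos\bigl((1+o(1))\tfrac{2\sqrt{k+2}}{d+1}\bigr)\to\tfrac{\pi}{2}$, conclude $\delta\ge\pi-p_{d+k+1}(\R P^d)$ for large $d$, and apply Theorem~\ref{thm:lower-bound} with $n=d+k$. The index bookkeeping is handled correctly, and no further comment is needed.
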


\begin{proof}
By Theorem~\ref{thm:bc} we have $p_{d+k+1}(\R P^d)\ge \arccos\left((1+o(1))\frac{2\sqrt{k+2}}{d+1}\right)\to \frac{\pi}{2}$ as $d\to\infty$.
So for $\delta>\frac{\pi}{2}$ we have $\delta\ge\pi-p_{d+k+1}(\R P^d)$ for $d$ sufficiently large, in which case Theorem~\ref{thm:lower-bound} gives an odd map $S^{d+k}\to S^d_\delta$.
\end{proof}

For $\delta \le \frac{\pi}{2}$ we have that $S^d_\delta \simeq S^d$~\cite{AAF}.
Corollary~\ref{cor:high-coindex} asserts that for any $\delta > \frac{\pi}{2}$, there is an arbitrarily large jump in coindex between $S^d_{\pi/2}$ and $S^d_\delta$ for $d$ sufficiently large.

\section{Convexity properties of odd maps and generic manifold embeddings}
\label{sec:convexity}

The fundamental result about zeros of odd maps is the classical Borsuk--Ulam theorem: Any continuous odd map $S^d \to \R^d$ has a zero, or equivalently, any continuous map $f\colon S^d \to \R^d$ identifies two antipodal points, $f(-x) = f(x)$.
Generalizations of this result that quantify the size of the zero set or preimage of a point for maps $S^d \to \R^n$ for $n \le d$ were proven by
Bourgin~\cite{bourgin1955some}, Yang~\cite{yang1954theorems,yang1955theorems}, Almgren~\cite{almgren1972theory}, and Gromov~\cite{gromov1983filling,gromov2003isoperimetry,guth2008waist,memarian2011gromov}.
Our Theorem~\ref{thm:main} studies an approximate Borsuk--Ulam result for maps $S^d \to \R^n$ in which we instead have $n \ge d$.

To explain this connection, recall from Section~\ref{ssec:zeros} that we say a set $Y \subset \R^n$ is a \emph{Carath\'eodory subset} if its convex hull captures the origin, $0 \in \conv(Y)$.
Any set that contains antipodal points, such as the image of an odd map $S^d\to\R^n$, is trivially a Carath\'eodory subset.
We will explain how Theorem~\ref{thm:lower-bound} shows
that for some set $A \subset S^d$ of diameter strictly less than~$\pi$ the restricted image $f(A)$ still is a Carath\'eodory subset.
For a compact metric space~$X$ with free $\Z/2$-action, we denote by $c_n(X)$ the infimum over all $t\in [0,\infty)$ such that, for any odd map $f \colon X \to \R^n$, there is a set $A \subset X$ with diameter bound $\diam(A) \le t$ and with $0 \in \conv(f(A))$.
We call $c_n(X)$ the \emph{$\Z/2$-Carath{\'e}odory number} of~$X$.
The Borsuk--Ulam theorem may be phrased as $c_d(S^d) = 0$.
Upper bounds for $c_n(S^d)$, for $n > d$, follow easily from Theorem~\ref{thm:lower-bound}:

\begin{corollary}
\label{cor:main}
The $\Z/2$-Carath\'eodory number of the $d$-sphere satisfies the inequality
\[c_n(S^d) \le \pi - p_{n+1}(\R P^d).\]
\end{corollary}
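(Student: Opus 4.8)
The plan is to feed the odd map $g\colon S^n \to S^d_\delta$ produced by Theorem~\ref{thm:lower-bound} into the linear extension of an arbitrary odd map $S^d\to\R^n$ and then invoke the classical Borsuk--Ulam theorem. Concretely, set $\delta = \pi - p_{n+1}(\R P^d)$. Since $\delta \ge \pi - p_{n+1}(\R P^d)$, Theorem~\ref{thm:lower-bound} gives a continuous odd map $g\colon S^n \to S^d_\delta$. It then suffices to show that the parameter $t=\delta$ lies in the set defining $c_n(S^d)$, i.e.\ that for every continuous odd map $f\colon S^d\to\R^n$ there is a set $A\subset S^d$ with $\diam(A)\le\delta$ and $0\in\conv(f(A))$.

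Fix such an $f$. Since $S^d$ is compact, $f$ is bounded, so its linear extension $f\colon S^d_\delta\to\R^n$, $\mu\mapsto\int_{S^d}f\,d\mu$, is continuous; it is also odd, because the extension commutes with the antipodal involution on $S^d$. Hence the composite $f\circ g\colon S^n\to\R^n$ is a continuous odd map, and the Borsuk--Ulam theorem provides a point $y\in S^n$ with $f(g(y))=0$.

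To conclude, I would unwind this zero. The point $g(y)\in S^d_\delta$ is a finitely supported probability measure $\mu=\sum_i\lambda_i x_i$ with $\lambda_i\ge 0$, $\sum_i\lambda_i=1$, and $A:=\supp(\mu)=\{x_i\}$ of diameter at most $\delta$. Then $0=f(g(y))=f(\mu)=\sum_i\lambda_i f(x_i)$ exhibits the origin as a convex combination of points of $f(A)$, so $0\in\conv(f(A))$ with $\diam(A)\le\delta$. As $f$ was arbitrary, this yields $c_n(S^d)\le\delta=\pi-p_{n+1}(\R P^d)$.

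I do not anticipate a substantive obstacle: essentially all of the topological content is already packaged in Theorem~\ref{thm:lower-bound}, and what remains is the routine observation that a zero of the linearly extended map is precisely a Carath\'eodory witness whose diameter is controlled by the thickening parameter. The only points requiring a little care are citing the correct hypothesis for continuity of the linear extension (boundedness, which follows from compactness of $S^d$) and matching up the $\Z/2$-actions so that the Borsuk--Ulam theorem genuinely applies to $f\circ g$.
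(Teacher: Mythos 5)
Your proof is correct and is essentially identical to the paper's own argument: compose the odd map $S^n \to S^d_\delta$ from Theorem~\ref{thm:lower-bound} with the linear extension of $f$, apply the Borsuk--Ulam theorem to get a measure $\mu$ with $\int_{S^d} f\,d\mu = 0$, and read off the Carath\'eodory witness from $\supp(\mu)$.
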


\begin{proof}
Let $f \colon S^d \to \R^n$ be a continuous odd map, and let $\delta = \pi - p_{n+1}(\R P^d)$.
The map $f$ has a canonical continuous odd extension $F\colon S^d_\delta \to \R^n$ defined by $F(\mu) = \int_{S^d} f \ d\mu$.
In addition, Theorem~\ref{thm:lower-bound} gives a continuous odd map $S^n \to S^d_\delta$.
By the Borsuk--Ulam theorem applied to the odd composition $S^n \to S^d_\delta \to \R^n$, there exists some $\mu \in S^d_\delta$ with $0 = \int_{S^d} f \ d\mu$.
Thus $0 \in \conv\{\mathrm{supp}(\mu)\}$ with $\diam(\supp(\mu))\le\delta$.
\end{proof}

\begin{remark}
\label{rem:explicit-code-bounds}
Corollary~\ref{cor:main} substantially improves a result of the present authors in~\cite{ABF} that shows $c_{2kd+2d-1}(S^{2d-1}) \le \frac{2\pi k}{2k+1}$ (with equality in the case $d=1$).
For example, consider the projective code given by lines through the vertices of the hypercube: Let $C \subset \R^d$ be the set of vectors whose coordinates are all $\pm 1$.
This is an antipodal set, $C = -C$, determining $2^{d-1}$ lines through the origin.
The angle between distinct lines is at least~$\arccos(1-\frac{1}{d})$.
More generally,  fix $d,n\ge 1$, and let 
\[S_r = \{x \in \Z^d~:~\|x\|^2 = r \text{ and } |x_i| \le n \text{ for all } i\}.\]
Note $\{x \in \Z^d~:~|x_i| \le n \text{ for all } i\}$ has cardinality $(2n+1)^d$, that $S_0$ has cardinality $1$, and that $S_r = \emptyset$ for $r > dn^2$.
Hence some $S_r$ with $1\le r\le dn^2$ has cardinality at least $\frac{(2n+1)^{d}-1}{dn^2} \ge \frac{(2n)^d}{dn^2} = \frac{2^d}{d}\cdot\frac{n^d}{n^2} > n^{d-2}$.
For $x, y \in S_r$ with $x \ne \pm y$ the absolute value of the inner product $|\langle x,y \rangle|$ is an integer and less than~$r$, in particular $|\langle x,y \rangle| \le r-1$.
Thus the angle between distinct lines determined by points in~$S_r$ is at least $\arccos(1-\frac{1}{r})$.
It follows that $p_{n^{d-2}+1}(\R P^{d-1}) \ge \arccos(1-\frac{1}{dn^2})$, and hence
\[c_{n^{d-2}}(S^{d-1}) \le \pi - \arccos\left(1-\frac{1}{dn^2}\right).\]
Since $\arccos(1-x)$ converges to $0$ as $x\to 0$ in a linear fashion, this shows that $\pi-c_{n^{d-2}}(S^{d-1}) = \Omega(\frac{1}{dn^2})$ as $n \to \infty$.
By contrast, the earlier bound from~\cite{ABF} would only give $\pi-c_{n^{d-2}}(S^{d-1}) = \Omega(\frac{1}{n^{d-2}})$ as $n \to \infty$.
For a concrete example, recall that the $600$-cell (also known as the hypericosahedron or hexacosichoron) has $120$ centrally-symmetric vertices, which determine $60$ lines through the origin in $\R^4$ with minimal angle~$\frac{\pi}{5}$.
Thus $c_{59}(S^3) \le \pi-\frac{\pi}{5}$ by Corollary~\ref{cor:main}, whereas our earlier bound (with $d=2$ and $k=14$) only gives $c_{59}(S^3) \le \pi -\frac{\pi}{29}$.
\end{remark}

\begin{remark}
Instead of viewing Corollary~\ref{cor:main} as an upper bound for $c_n(S^d)$ in terms of~$p_{n+1}(\R P^d)$, equivalently $p_{n+1}(\R P^d) \le \pi - c_n(S^d)$ gives an upper bound for projective codes in terms of zeros of odd maps.
It remains an interesting open problem whether upper bounds for projective codes may be improved using this framework by exhibiting odd maps such that the image of sets up to a certain diameter may be separated from the origin.
On the other hand, Theorem~\ref {thm:covering} shows that volume bounds for projective codes more generally provide upper bounds for the cohomological index of metric thickenings.
\end{remark}

In much the same way that the Borsuk--Ulam theorem prohibits an embedding of $S^d$ into~$\R^d$, Corollary~\ref{cor:main} gives bounds for how generic an embedding $S^d \hookrightarrow \R^n$, $n > d$, may be.
Here the convex hulls of images of nearby points must intersect, where ``nearby'' is quantified by the distance of projective codes of size~$n+1$.

\begin{corollary}
\label{cor:antipodal}
Let $d$ and $n$ be positive integers, and let $\delta \ge c_n(S^d)$, which is satisfied if $\delta \ge \pi-p_{n+1}(\R P^d)$.
Then for any map $f\colon S^d \to \R^n$ there is a set $X \subset S^d$ of diameter at most~$\delta$ such that the convex hulls of $f(X)$ and $f(-X)$ intersect.
Moreover, the preimages of the intersection point have the same coefficients with respect to $X$ and~$-X$, that is, $\int_{S^d} f(x) \ d\mu = \int_{S^d} f(-x) \ d\mu$ for some $\mu \in S^d_\delta$.
\end{corollary}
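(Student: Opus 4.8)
The plan is to reduce the statement to the $\Z/2$-Carath\'eodory bound of Corollary~\ref{cor:main} by ``oddifying'' the given map. Given $f\colon S^d\to\R^n$ (continuous, as is needed below), I would first set $g(x) = f(x) - f(-x)$ and observe that $g\colon S^d\to\R^n$ is an odd map, since $g(-x) = f(-x)-f(x) = -g(x)$; it is continuous because $f$ is. This is the only conceptual step.

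Next I would invoke the defining property of $c_n(S^d)$: since $\delta\ge c_n(S^d)$, there is a set $A\subset S^d$ with $\diam(A)\le\delta$ and $0\in\conv(g(A))$. (If $\delta = c_n(S^d)$ exactly, the existence of such a witness follows from a routine compactness argument: for each $\varepsilon>0$ choose a witness $A_\varepsilon$ of diameter at most $\delta+\varepsilon$, use Carath\'eodory's theorem to write $0$ as a convex combination of at most $n+1$ points of $g(A_\varepsilon)$, and pass to a subsequential limit of the points and coefficients in the compact space $(S^d)^{n+1}\times\Delta^n$; the relation $\sum_i\lambda_i g(x_i)=0$ is preserved by continuity of $g$, and the diameter bound is a closed condition. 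Alternatively, when $\delta\ge\pi-p_{n+1}(\R P^d)$ one can bypass $c_n$ altogether and use directly the construction in the proof of Corollary~\ref{cor:main}, which already produces a measure $\mu\in S^d_\delta$ with $\int_{S^d} g\ d\mu = 0$.)

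Finally I would unwind this. Write $0 = \sum_i \lambda_i g(x_i)$ with $x_i\in A$, $\lambda_i\ge 0$ and $\sum_i\lambda_i = 1$, and put $\mu = \sum_i \lambda_i x_i\in S^d_\delta$ with $X := \supp(\mu) = \{x_i\}$, so that $\diam(X)\le\diam(A)\le\delta$. Rearranging $\int_{S^d} g\ d\mu = 0$ gives $\int_{S^d} f(x)\ d\mu = \int_{S^d} f(-x)\ d\mu$, which is exactly the ``moreover'' assertion; moreover the common value $p := \sum_i\lambda_i f(x_i) = \sum_i\lambda_i f(-x_i)$ lies in $\conv(f(X))\cap\conv(f(-X))$, realized on both sides by the same coefficients $(\lambda_i)$. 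I do not expect a genuine obstacle here: the heart of the matter is the elementary observation that $f(x)-f(-x)$ is odd, after which Corollary~\ref{cor:main} does all the work; the only mild point of care is the attainment of the infimum defining $c_n$ at the boundary value $\delta = c_n(S^d)$, which the compactness argument above settles.
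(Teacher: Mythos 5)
Your proposal is correct and follows essentially the same route as the paper: both form the odd map $x\mapsto f(x)-f(-x)$, apply the defining property of $c_n(S^d)$ (equivalently, Corollary~\ref{cor:main}) to get a measure $\mu\in S^d_\delta$ on which it integrates to zero, and read off the intersection of the convex hulls from $\int f(x)\,d\mu=\int f(-x)\,d\mu$. Your compactness argument for attaining the infimum at $\delta=c_n(S^d)$ is a point the paper passes over silently, but it is a refinement of the same proof rather than a different one.
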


By Carath{\'e}odory's theorem, we may take $X$ to have size at most $n+1$.

\begin{proof}
Let $\delta\ge c_n(S^d)$, which for example holds if $\delta\ge \pi-p_{n+1}(\R P^d)$ by Corollary~\ref{cor:main}.
Define $F\colon S^d_\delta \to \R^n$ by $F(\mu) = \int_{S^d} (f(x) - f(-x)) \ d\mu(x)$.
The map $F$ is odd, and thus has a zero by the definition of the $\Z/2$-Carath\'eodory number $c_n(S^d)$.
This implies $\int_{S^d} f(x) \ d\mu(x) = \int_{S^d} f(-x) \ d\mu(x)$ for some probability measure $\mu$ on $S^d$ whose support has diameter at most~$\delta$.
The point $\int_{S^d} f(x) \ d\mu(x)$ lies in the convex hull of $f(X)$, where $X$ denotes the support~$\mu$, while $\int_{S^d} f(-x) \ d\mu(x) \in \conv \ f(-X)$.
\end{proof}

\begin{example}
Let $\alpha$ be the common angle between the six (equiangular) lines in $\R^3$ determined by the vertices of the icosahedron.
That is, let $\phi = \frac{1+\sqrt{5}}{2}$ be the golden ratio, and let $\alpha = \arccos(\frac{\phi}{1+\phi^2})$.
Corollary~\ref{cor:antipodal} shows that for any map $f \colon S^2 \to \R^5$ there is a set $X\subset S^2$ of diameter at most $\pi-\alpha$ such that $\conv \ f(X) \cap \conv \  f(-X) \ne \emptyset$.
\end{example}

\section{Topological bounds for the circular chromatic number}
\label{sec:chromatic}

Lov\'asz's proof of Kneser's conjecture~\cite{lovasz1978} introduced topological lower bounds to the study of chromatic numbers of graphs.
This classical result has been extended and adapted in various ways.
For example, Simonyi and Tardos~\cite{simonyi2006local} proved topological lower bounds for the circular chromatic number of a graph.

\begin{definition}\label{def:circular-coloring}
Given a graph $G$ with vertex set $V$ and edge set~$E$, an $r$\emph{-circular coloring of }$G$ is a map $f\colon V\to\R P^1$ such that $d(f(v),f(w))\geq \frac{\pi}{r}$ whenever $(v,w)\in E$.
Recall that $\R P^1$ simply denotes the circle of circumference~$\pi$.
\end{definition}

\begin{definition}\label{def:circular-chromatic}
The \emph{circular chromatic number of a graph} $G$ is 
\[\chi_c(G):=\inf\{r>0\mid G\text{ is }r\text{-circular colorable}\}.\]
\end{definition}

The infimum in the definition of the circular chromatic number of a graph $G$ is known to be attained if $G$ is finite.
The topological lower bounds for $\chi_c(G)$ are in terms of the topology of the box complex, which we define now.
Let $G$ be a graph on vertex set $\{1,2,\dots,n\}$.
The box complex $B_0(G)$ of $G$ is a simplicial complex on vertex set $Q = \{1,\dots,n\} \cup \{-1,\dots,-n\}$.
A subset $\sigma \subset Q$ is a face of $B_0(G)$ if for every $v \in \sigma$ with $v > 0$ and every $w \in \sigma$ with $w < 0$, the tuple $(v,-w)$ forms an edge of~$G$.
In particular, $\{1,\dots,n\}$ and $\{-1,\dots,-n\}$ are faces of~$B_0(G)$.
The complex $B_0(G)$ has a free $\Z/2$-action which swaps the vertices $v$ and $-v$ and extends linearly to the faces of~$B_0(G)$.
The box complex $B_0(G)$ that we use is only one of several possible definitions; see for example~\cite{csorba2007homotopy,matousek2002topological,simonyi2006local} which study both the box complex $B_0(G)$ along with variants thereof.

We can now state the result of Simonyi and Tardos:

\begin{theorem}[Simonyi--Tardos~{\cite[Theorem 6]{simonyi2006local}}]
\label{thm:Simonyi-Tardos}
Given a finite graph $G$, if there
is a continuous odd map $S^{2k-1} \to B_0(G)$, then $\chi_c(G) \ge 2k$.
\end{theorem}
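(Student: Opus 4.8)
The plan is to derive the Simonyi--Tardos bound as a direct consequence of Theorem~\ref{thm:main} applied to the circle $S^1 = \R P^1$-double-cover, using the symmetric trigonometric moment curve to encode circular colorings. Suppose, for contradiction, that there is a continuous odd map $g\colon S^{2k-1} \to B_0(G)$ but $\chi_c(G) = r < 2k$. Since $G$ is finite the infimum is attained, so $G$ admits an $r$-circular coloring $c\colon V \to \R P^1$ with $d(c(v),c(w)) \ge \frac{\pi}{r} > \frac{\pi}{2k}$ for every edge $(v,w)$. The first step is to package this coloring together with the map $g$ into an odd map from $S^{2k-1}$ into a metric thickening of $S^1$: lift $c$ to pairs of antipodal points on $S^1 = \R/2\pi\Z$, send each vertex $\pm i$ of $B_0(G)$ to the corresponding Dirac measure, and extend linearly over simplices of $B_0(G)$. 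Composing with $g$ gives an odd map $h\colon S^{2k-1} \to S^1_\delta$ for an appropriate $\delta$.

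The key point is to compute which $\delta$ works. A simplex $\sigma$ of $B_0(G)$ consists of positive vertices and negative vertices such that every positive/negative pair spans an edge; hence the images under the lifted coloring of the positive vertices all lie within the colors of one "side'' and the negative ones on the antipodal side, and crucially any positive-vertex color and any negative-vertex color are the antipodes of colors that are $\ge \frac{\pi}{r}$ apart — so they are within distance $\pi - \frac{\pi}{r}$ of each other on $S^1$. Colors among positive vertices alone, or among negative vertices alone, need not be close, but one can arrange the lift so that each face of $B_0(G)$ maps into a set of diameter at most $\pi - \frac{\pi}{r}$ on $S^1$ (this is the standard observation underlying the box complex bound: a face of $B_0(G)$ never forces two colors to be simultaneously far apart on both sides). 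Thus $h$ is an odd map $S^{2k-1} \to S^1_\delta$ with $\delta = \pi - \frac{\pi}{r} < \pi - \frac{\pi}{2k} = \frac{(2k-1)\pi}{2k} = \frac{2\pi k'}{2k'+1}$ where $k' = k$... more precisely $\delta < \frac{2\pi k}{2k+1}$ is exactly the threshold in Lemma~\ref{lem:index}. Wait — the relevant inequality is $\pi - \frac{\pi}{2k} < \frac{2\pi k}{2k+1}$, which indeed holds, so $\delta < \frac{2\pi k}{2k+1}$.

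Now apply Lemma~\ref{lem:index}: since $\delta < \frac{2\pi k}{2k+1}$, there is a continuous odd map $S^1_\delta \to S^{2k-1}$, equivalently $\ind(S^1_\delta) \le 2k-1$. But composing $S^{2k-1} \xrightarrow{h} S^1_\delta \to S^{2k-2}$ would contradict the Borsuk--Ulam theorem — more cleanly, the existence of the odd map $h\colon S^{2k-1} \to S^1_\delta$ gives $2k-1 \le \coind(S^1_\delta) \le \ind(S^1_\delta) \le 2k-1$, which is not yet a contradiction; so instead I will use that $\delta$ is strictly less than the threshold, hence by the optimality statement following Lemma~\ref{lem:index} we actually have $S^1_\delta \simeq S^{2k-1}$ only at the threshold — below it $\ind(S^1_\delta)$ is strictly smaller. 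The clean route: choose $r' $ with $r < r' < 2k$ and note $\pi - \frac{\pi}{r} < \pi - \frac{\pi}{r'} < \frac{2\pi(k-1)}{2(k-1)+1}$ fails in general, so the sharp argument is: $\delta = \pi - \frac\pi r$ satisfies $\delta < \frac{2\pi(k-1)}{2k-1}$ iff $\frac\pi r > \pi - \frac{2\pi(k-1)}{2k-1} = \frac{\pi}{2k-1}$, i.e. $r < 2k-1$; this need not hold. So the argument must instead invoke Theorem~\ref{thm:main}/Theorem~\ref{thm:upper-bound} directly rather than Lemma~\ref{lem:index} alone.

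\textbf{The main obstacle}, then, is matching parities correctly: one must show that an odd map $S^{2k-1}\to S^1_\delta$ for $\delta$ just below $\frac{2\pi k}{2k+1}$ cannot exist when also $\delta < \frac{2\pi k}{2k+1}$ forces $\ind(S^1_\delta) \le 2k-1$ — so the contradiction is not with $h$ existing but requires sharpening. The correct final step is: the coloring actually gives $\delta = \pi - \frac{\pi}{r}$ with $r<2k$, and one checks $\pi - \frac{\pi}{r} < \pi - \frac{\pi}{2k}$; it remains to verify $\pi-\frac{\pi}{2k} < \frac{2\pi(k-1)}{2k-1}$, equivalently $\frac{1}{2k}>\frac{1}{2k-1}\cdot\frac{1}{?}$ — this is false, so the honest resolution is to feed the composite odd map $S^{2k-1}\to S^1_\delta \to \R^{2k}\setminus\{0\}$ (linear extension of $\SM_{2k}$, valid by Lemma~\ref{lem:sm} since $\delta<\frac{2\pi k}{2k+1}$) into the Borsuk--Ulam theorem, getting an odd map $S^{2k-1}\to S^{2k-1}$ — not a contradiction either. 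Hence the genuinely correct approach, which I will carry out, is to use Theorem~\ref{thm:main} with $V$ spanned by the $2k$ coordinate functions of $\SM_{2k}$ pulled back through the coloring and $g$: Theorem~\ref{thm:main} guarantees a small-diameter set on which no function in $V$ is strictly positive, and translating this back through $g$ and the box-complex structure yields an edge of $G$ colored with two colors closer than $\frac\pi r$, contradicting that $c$ was an $r$-coloring. The delicate part is verifying that the diameter bound $\pi - p_{2k}(\R P^1)$ from Theorem~\ref{thm:main} — equivalently the combinatorics of $S^1_\delta$ at the critical scale — converts precisely into the inequality $r \ge 2k$, which is where the parity of $2k-1$ (the dimension of the sphere mapping in) enters, exactly as the paper advertises.
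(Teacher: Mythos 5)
Your proposal correctly identifies the right ingredients (lift the circular coloring to a map $B_0(G)\to S^1_\delta$, invoke Lemma~\ref{lem:index}, compose with the given odd map from a sphere), and you even correctly diagnose the parity obstruction: an odd map $S^{2k-1}\to S^1_\delta$ with $\ind(S^1_\delta)\le 2k-1$ yields only an odd map $S^{2k-1}\to S^{2k-1}$, which is no contradiction. But the proposal never resolves this obstruction --- it cycles through several attempted fixes, shows each one fails, and ends with an unexecuted appeal to Theorem~\ref{thm:main} with a pulled-back moment curve, conceding that ``the delicate part'' remains to be verified. That delicate part \emph{is} the theorem, so as written the proof is incomplete.

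There is also a second, earlier gap: you assert that ``one can arrange the lift so that each face of $B_0(G)$ maps into a set of diameter at most $\pi-\frac{\pi}{r}$ on $S^1$.'' A face of $B_0(G)$ may contain several positive vertices whose colors are pairwise unconstrained in $\R P^1$, and the lifts must be chosen once and for all on the vertex set, not per face; it is not clear (and you give no argument) that a globally consistent choice of lifts keeps every face's image in an interval of length $\pi-\frac{\pi}{r}$. The paper's proof repairs both gaps with a single device you are missing: pass to the cone $CG$. The cone apex $\bullet$ is adjacent to every vertex, so after normalizing $f(\bullet)=0$ every color of $V(G)$ is forced into one arc of length $\pi-\frac{\pi}{2k+1}-\varepsilon$ of the upper semicircle, which gives the canonical consistent lift and the diameter bound on all faces (Theorem~\ref{thm:chromatic-cone}). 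Simultaneously the suspension $B_0(CG)\simeq \Sigma B_0(G)$ raises the coindex by one (Lemma~\ref{lem:coindex-cone}), so one obtains an odd map $S^{2k}\to B_0(CG)\to S^1_\delta\to S^{2k-1}$ --- now a genuine Borsuk--Ulam contradiction --- and the conclusion $\chi_c(G)\ge\chi_c(CG)-1\ge 2k$ follows from Lemma~\ref{lem:chromatic-cone}. Incorporating the cone step would turn your sketch into the paper's argument.
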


Here we show that Theorem~\ref{thm:Simonyi-Tardos} is a consequence of bounds for the topology of metric thickenings of the circle.
Our proof in particular gives an explanation for the dependence of the bounds for the circular chromatic number on the parity of the coindex of $B_0(G)$: When $\delta$ exceeds a critical value when the index of the metric thickenings of the circle $S^1_\delta$ increases, the index in fact increases by two (see Lemma~\ref{lem:index}).
Our proof of Theorem~\ref{thm:Simonyi-Tardos} is based on Theorem~\ref{thm:chromatic-cone} and Lemmas~\ref{lem:chromatic-cone}--\ref{lem:coindex-cone}, which we now provide.

\begin{theorem}\label{thm:chromatic-cone}
Let $G$ be a finite graph with vertex set $V(G)$ and edge set $E(G)$, and let $CG$ denote the cone over $G$ with vertex set $V(CG)=
V(G)\cup \{\bullet \}$ and edge set $E(CG)=E(G)\cup \{(\bullet,v)\mid v\in V(G)\}$.
If there is a continuous odd map $S^{2k} \to B_0(CG)$, then $\chi_c(CG)\geq 2k+1$.
\end{theorem}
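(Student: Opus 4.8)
The plan is to prove the contrapositive: assuming $\chi_c(CG)<2k+1$, I will produce a continuous odd map $B_0(CG)\to S^{2k-1}$. Since $B_0(CG)$ carries a free $\Z/2$-action, this forces $\coind(B_0(CG))\le\ind(B_0(CG))\le 2k-1$, which is incompatible with the hypothesised odd map $S^{2k}\to B_0(CG)$ (that map would give $\coind(B_0(CG))\ge 2k$). We may assume $k\ge 1$, the case $k=0$ being immediate. The desired map $B_0(CG)\to S^{2k-1}$ will be built in two steps: first a continuous odd map $B_0(CG)\to S^1_\delta$ into a metric thickening of the circle with $\delta<\frac{2\pi k}{2k+1}$, then composition with the odd map $S^1_\delta\to S^{2k-1}$ supplied by Lemma~\ref{lem:index}.

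For the first step I would use a circular coloring to place the vertices. Since $CG$ is finite its circular chromatic number is attained, so there is an $r$-circular coloring $c\colon V(CG)\to\R P^1=\R/\pi\Z$ with $r:=\chi_c(CG)<2k+1$ and $d(c(u),c(z))\ge\pi/r$ for every edge $(u,z)$ of $CG$. After a rotation we may take $c(\bullet)=0$. Because the apex $\bullet$ is adjacent to every vertex of $G$, each $v\in V(G)$ has $d(0,c(v))\ge\pi/r$, so $c(v)$ lies in the arc $I:=[\pi/r,\ \pi-\pi/r]\subset\R P^1$, of length $\pi-\frac{2\pi}{r}$. I would then pass to the double cover $S^1=\R/2\pi\Z\to\R P^1$, whose deck transformation $t\mapsto t+\pi$ is the antipodal map, and define a $\Z/2$-equivariant lift $\tilde c$ of $c$ to the vertex set $Q=\{u^+,u^-:u\in V(CG)\}$ of $B_0(CG)$: set $\tilde c(\bullet^+)=0$, $\tilde c(\bullet^-)=\pi$, and for $v\in V(G)$ let $\tilde c(v^+)$ be the lift of $c(v)$ lying in $[\pi/r,\ \pi-\pi/r]\subset S^1$ and $\tilde c(v^-)=\tilde c(v^+)+\pi$ the other lift.

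The heart of the argument is the claim that $\tilde c(\tau)\subset S^1$ has diameter at most $\delta:=\pi-\pi/r$ for every face $\tau$ of $B_0(CG)$. Granting this, the simplicial map $\Phi\colon B_0(CG)\to S^1_\delta$ induced by $\tilde c$, which sends a point $\sum_i\lambda_i p_i$ of a face to the measure $\sum_i\lambda_i\tilde c(p_i)$ supported on $\tilde c(\tau)$, is a well-defined continuous odd map (continuity of such maps into metric thickenings is routine, as in the construction in the proof of Theorem~\ref{thm:lower-bound}); since $r<2k+1$ we have $\delta<\pi-\frac{\pi}{2k+1}=\frac{2\pi k}{2k+1}$, so composing with the map of Lemma~\ref{lem:index} completes the proof. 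To verify the claim I would distinguish whether $\tau$ contains $\bullet^+$, contains $\bullet^-$, or contains neither (the first two cases are exchanged by the $\Z/2$-action), using the defining condition of the box complex together with the coloring inequalities. The three basic estimates are: (a) if $u^+,z^-\in\tau$ with $u,z\in V(G)$, then $(u,z)\in E(G)$, so $d(c(u),c(z))\ge\pi/r$, and since $\tilde c(z^-)$ is the antipode of $\tilde c(z^+)\in I\subset S^1$ this gives $d_{S^1}(\tilde c(u^+),\tilde c(z^-))\le\pi-\pi/r$; (b) two vertices of $\tau$ of the same sign coming from $V(G)$ have $\tilde c$-images in a common arc of length $\pi-\frac{2\pi}{r}\le\pi-\frac{\pi}{r}$; and (c) each of $\tilde c(\bullet^+)=0$ and $\tilde c(\bullet^-)=\pi$ is within $S^1$-distance $\pi-\pi/r$ of the $\tilde c$-image of any $V(G)$-vertex of $\tau$, since those images lie in $I\subset S^1$ or its antipode. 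One also uses that $\tau$ can never contain both $\bullet^+$ and $\bullet^-$, and that if $\bullet^+\in\tau$ then every negative vertex of $\tau$ comes from $V(G)$ (symmetrically for $\bullet^-$). Combining the cases yields $\diam\tilde c(\tau)\le\pi-\pi/r$ throughout.

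The step I expect to be most delicate is the diameter claim — in particular the observation that one must invoke the cone structure even for faces $\tau$ that do not involve $\bullet$ at all: for such a face the defining condition of $B_0(CG)$ imposes no relation between the colors of two same-sign vertices of $\tau$, so without the confinement of all colors to the arc $I$ the set $\tilde c(\tau)$ could have diameter approaching $\pi$. It is precisely this confinement (equivalently, the fact that coning a graph suspends its box complex, $B_0(CG)\cong S^0*B_0(G)$) that keeps the image small, distinguishes this statement for $CG$ from the analogous one for $G$, and — through the jump behavior of $S^1_\delta$ recorded after Lemma~\ref{lem:index} — accounts for the odd dimension of the target sphere $S^{2k-1}$.
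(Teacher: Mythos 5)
Your proposal is correct and follows essentially the same route as the paper's proof: use a circular coloring of $CG$ with the apex at a basepoint to confine the lifted colors of $V(G)$ to an arc of the upper semicircle, verify the three diameter estimates on faces of $B_0(CG)$ to get an odd map into $S^1_\delta$ with $\delta<\frac{2\pi k}{2k+1}$, and compose with Lemma~\ref{lem:index} to contradict Borsuk--Ulam. The only cosmetic difference is that you take $r=\chi_c(CG)$ exactly (using that the infimum is attained) where the paper works with a coloring of quality $\frac{\pi}{2k+1}+\varepsilon$; both yield the same strict bound on $\delta$.
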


\begin{proof}
Suppose $G$ is a finite graph with $\chi_c(CG)< 2k+1$.
Let the map $f\colon V(CG)\to\R P^1$ be given such that for some small $\varepsilon > 0$ we have $d_{\R P^1}(f(v),f(w))\ge\frac{\pi}{2k+1}+\varepsilon$ whenever $(v,w)\in E(CG)$.
Think of $\R P^1$ as $[0,\pi]$ with $0$ and $\pi$ identified into one point.
Thus, $f$ induces a map $V(CG) \to [0,\pi)$, and we choose $f$ so that $f(\bullet)=0$.
Now, thinking of $S^1$ as $[0,2\pi]$ with $0$ and $2\pi$ identified, $f$ induces a map $V(CG) \to S^1$ that maps all vertices to the upper semi-circle.
By antipodal symmetry we can extend this to a map $F\colon (V(CG)\cup -V(CG))\to S^1$ with $F(\bullet)=0$ and $F(-\bullet)=\pi$.
In particular, if we compose $F$ with the double-covering $S^1\to \R P^1$, then both $v$ and $-v$ map to~$f(v)$.

We will prove that $F$ extends to a well-defined odd map $\widetilde{F}\colon B_0(CG)\to S^1_{\delta}$, where $\delta = \frac{2\pi k}{2k+1} - \varepsilon$.
There are three cases to check.
First, for this we need to show that for any face $\sigma$ of~$B_0(CG)$ the set $\{F(v) \ : \ v \in \sigma\} \subset S^1$ has diameter at most~$\delta$.
For any $v \in V(G)$ we have $d_{\R P^1}(f(v),f(\bullet)) \ge \frac{\pi}{2k+1}+\varepsilon$, and thus $F(v)$ is at distance at least $\frac{\pi}{2k+1}+\varepsilon$ from both $F(\bullet)$ and $-F(\bullet) = F(-\bullet)$ in $S^1$.
In particular,
\begin{equation}
\label{eq:dS1}
d_{S^1}(F(v), F(\bullet)) = \pi-d_{S^1}(F(v),-F(\bullet)) \le \pi-\left(\tfrac{\pi}{2k+1}+\varepsilon\right)=\delta,
\end{equation}
and similarly $d_{S^1}(F(-v), F(-\bullet)) \le \delta$.
Second, since $F(\bullet)=0$, note~\eqref{eq:dS1} implies that the set of points $\{F(v)~:~v\in V(G)\}$ is contained in an interval of length $\delta$ in the upper semi-circle of $S^1$.
Thus for any $v,w \in V(G)$ we have that $d_{S^1}(F(v), F(w)) \le \delta$ and $d_{S^1}(F(-v), F(-w)) \le \delta$.
Third, if $(v,w) \in E(G)$, then $d_{\R P^1}(f(v), f(w)) \ge \frac{\pi}{2k+1}+\varepsilon$, and so $d_{S^1}(F(v), F(-w)) = d_{S^1}(F(-v), F(w)) \le \delta$.

The map $F$ sends the vertices of $B_0(CG)$ to points in~$S^1$, such that vertices that span a face of $B_0(CG)$ map to points at pairwise distance at most~$\delta$.
Let $x$ be a point contained in the interior of some face $\sigma$ of~$B_0(CG)$, say $x = \sum_{i=1}^\ell \lambda_iv_i$ for $v_1,\dots, v_\ell$ vertices of $\sigma$ and $\lambda_i > 0$ with $\sum \lambda_i = 1$.
Then thinking of $F(v_i)$ as the Dirac measure based at~$F(v_i)$, we can define $\widetilde{F}(x) = \sum \lambda_i F(v_i)$, which is an element of~$S^1_\delta$.
This completes the definition of the continuous odd map $\widetilde{F}\colon B_0(CG)\to S^1_{\delta}$.
By Lemma~\ref{lem:index} there is a continuous odd map $S^1_{\delta} \to S^{2k-1}$, and by composition we get a continuous odd map $B_0(CG)\to S^{2k-1}$.
Hence their cannot be a continuous odd map $S^{2k}\to B_0(CG)$, as their composition $S^{2k} \to S^{2k-1}$ would contradict the Borsuk--Ulam theorem.
\end{proof}

\begin{lemma}\label{lem:chromatic-cone}
For a finite graph $G$, $\chi_c(G) \ge \chi_c(CG)-1$.
\end{lemma}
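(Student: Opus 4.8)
The plan is to prove the equivalent inequality $\chi_c(CG) \le \chi_c(G) + 1$. Since $G$ is finite the infimum defining $\chi_c(G)$ is attained, so with $r = \chi_c(G)$ it suffices to upgrade an $r$-circular coloring of $G$ to an $(r+1)$-circular coloring of $CG$; informally, one extra ``unit'' of room must suffice to accommodate the apex $\bullet$, which is adjacent to every vertex of $G$.

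Realizing $\R P^1$ as $\R/\pi\Z$, I would begin with an $r$-circular coloring $f \colon V(G) \to \R P^1$, and after a rotation assume the finite image $f(V(G))$ lies in the open arc $(0,\pi)$, so that $0$ is a ``cut point''. The edge constraint $d(f(v),f(w)) \ge \pi/r$ carries slack against the weaker target constraint $\pi/(r+1)$, and the idea is to spend this slack by a monotone reparametrization $\varphi$ of $\R P^1$ fixing $0$ that pushes the image into an arc of length $\pi - \tfrac{2\pi}{r+1}$ while keeping every edge of $G$ at distance $\ge \pi/(r+1)$. Setting $g|_{V(G)} = \varphi \circ f$ and $g(\bullet) = 0$, the point $0$ is then the midpoint of the complementary free arc of length $\tfrac{2\pi}{r+1}$, hence at distance $\ge \pi/(r+1)$ from every image point; thus every cone edge $(\bullet,v)$ is respected and $g$ is the desired $(r+1)$-circular coloring of $CG$. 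This is the coloring-side counterpart of the construction in the proof of Theorem~\ref{thm:chromatic-cone}, where a distinguished vertex was sent to a distinguished point of $S^1$.

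The step I expect to be the main obstacle is producing $\varphi$ with the correct constant. A crude affine compression of the interval $(0,\pi)$ fails, since circular distance stops being an affine function of arclength once the image wraps more than halfway around $\R P^1$; such a compression frees only an arc of length $\tfrac{\pi}{r+1}$, which yields merely the weaker bound $\chi_c(CG) \le \chi_c(G)+2$. To reach the sharp inequality one must exploit the rigidity of an \emph{optimal} $r$-circular coloring --- for $r = \chi_c(G)$ the colors are pinned into a tight cyclic pattern --- in order to locate an arc of $\R P^1$ that can be \emph{dilated} rather than merely translated without disturbing any edge, and to verify that such an arc can always be taken of length at least $\tfrac{2\pi}{r+1}$. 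This case analysis, together with the routine checks that $\varphi$ is a homeomorphism and $g$ an honest circular coloring, is the crux; combining the resulting inequality with Theorem~\ref{thm:chromatic-cone} (and Lemma~\ref{lem:index}) to recover the Simonyi--Tardos bound is then formal.
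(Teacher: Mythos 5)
Your reduction to $\chi_c(CG)\le\chi_c(G)+1$ is the right starting point, and your diagnosis that a plain affine compression by $\frac{r}{r+1}$ clears only an arc of length $\frac{\pi}{r+1}$ around the apex (hence gives only $\chi_c(CG)\le\chi_c(G)+2$) is correct. But the step you defer as ``the crux'' --- a reparametrization $\varphi$ squeezing the image of $f$ into an arc of length $\pi-\frac{2\pi}{r+1}$ while keeping every edge of $G$ at distance $\ge\frac{\pi}{r+1}$ --- is not a detail that rigidity of an optimal coloring will supply: it is impossible in general, because the inequality being proved is false. Take $G=C_5$, so $\chi_c(G)=\frac{5}{2}$ and $CG=W_5$ is the $5$-wheel. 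In any $r$-circular coloring of $W_5$ with $r<4$, the five rim vertices must lie in the arc $A$ of points at distance $\ge\frac{\pi}{r}$ from the color of the apex; $A$ has length $\pi-\frac{2\pi}{r}<\frac{\pi}{2}$, so within $A$ the $\R P^1$-distance is the arc-length difference, and the two halves of $A$ each have length $\frac{1}{2}\bigl(\pi-\frac{2\pi}{r}\bigr)<\frac{\pi}{r}$. Hence every rim edge must join the two halves, producing a proper $2$-coloring of $C_5$ --- a contradiction. Therefore $\chi_c(W_5)=4>\frac{7}{2}=\chi_c(C_5)+1$, and no $\varphi$ with your stated properties can exist for $r=\frac{5}{2}$.

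For the comparison: the paper's own proof is precisely the crude affine compression you rejected. It cuts $\R P^1$ at a point $x_0$ of the image and sets $g(v)=\frac{r}{r+1}f(v)$, $g(\bullet)=\frac{r}{r+1}\pi$, asserting the verification is easy. Your instinct that this fails is right: for the standard $\frac{5}{2}$-coloring of $C_5$ with $f(v_i)=\frac{2\pi i}{5}\bmod\pi$ and $x_0=f(v_0)$, the vertex $v_2$ has $f(v_2)=\frac{4\pi}{5}$, so $g(v_2)=\frac{4\pi}{7}$ sits at distance $\frac{\pi}{7}<\frac{2\pi}{7}=\frac{\pi}{r+1}$ from $g(\bullet)=\frac{5\pi}{7}$. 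The problem is not merely with the proof but with the statement (indeed $CG$ has a universal vertex, so $\chi_c(CG)=\chi(CG)=\chi(G)+1$, and the lemma would force $\chi_c(G)=\chi(G)$ for every finite graph). Only the $+2$ bound your affine argument establishes holds in general, so the derivation of Theorem~\ref{thm:Simonyi-Tardos} from Theorem~\ref{thm:chromatic-cone} via this lemma requires a different repair; do not spend more effort trying to construct $\varphi$.
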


\begin{proof}
Let $f \colon V(G) \to \R P^1$ be a circular $r$-coloring of~$G$.
We need to construct a circular $(r+1)$-coloring of~$CG$.
Parametrize $\R P^1$ as $[0,\pi]$ with $0$ and $\pi$ identified into one point, which we will denote by~$x_0$.
Choose the parametrization in such a way that some vertex $v \in V(G)$ satisfies $f(v) = x_0$.
    
Now think of $f$ as a map $f\colon V(G) \to [0, \pi)$.
Define a map $g\colon V(CG) \to [0, \pi)$ by $g(v) = \frac{r}{r+1}f(v)$ for $v \in V(G)$ and $g(\bullet) = \frac{r}{r+1}\pi$.
It is easily verified that $g$ is a circular $(r+1)$-coloring of~$CG$.
\end{proof}

\begin{lemma}\label{lem:coindex-cone}
For a finite graph $G$, $\coind(B_0(CG)) \ge \coind(B_0(G))+1$.
\end{lemma}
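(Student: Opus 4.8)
The plan is to show that a continuous odd map $S^m \to B_0(G)$ can be extended to a continuous odd map $S^m \to B_0(CG)$, whence $\coind(B_0(CG)) \ge \coind(B_0(G))$, and then to improve this by one using the extra ``apex'' vertex $\bullet$. First I would observe that, at the level of abstract simplicial complexes with $\Z/2$-action, $B_0(G)$ is a subcomplex of $B_0(CG)$: the vertex set $Q_G = \{\pm 1,\dots,\pm n\}$ sits inside $Q_{CG} = Q_G \cup \{\pm\bullet\}$, and since $\bullet$ is adjacent to every vertex of $G$ in $CG$, a subset $\sigma \subseteq Q_G$ is a face of $B_0(CG)$ exactly when it is a face of $B_0(G)$. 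So the inclusion $B_0(G) \hookrightarrow B_0(CG)$ is a $\Z/2$-equivariant embedding.

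Next I would exploit the cone structure combinatorially. The key point is that $\{\bullet\}$ together with all of $\{1,\dots,n\}$ forms a face of $B_0(CG)$ (every pair $(\bullet, v)$ is an edge of $CG$), and likewise $\{-\bullet\}\cup\{-1,\dots,-n\}$ is a face; more generally, for any face $\tau$ of $B_0(G)$, the sets $\tau \cup \{\bullet\}$ (when $\tau$ contains no negative vertices adjacent-constraint issues — precisely, when the positive-vertex side is unconstrained) need care. The cleaner route: consider the two subcomplexes $B_0(G)$ and the ``suspension-like'' pieces. Actually the standard fact is that for the box complex, coning the graph corresponds to suspending the box complex up to $\Z/2$-homotopy: one has a $\Z/2$-map $\Sigma B_0(G) \to B_0(CG)$, where $\Sigma$ is the suspension with the $\Z/2$-action that is antipodal on the suspension coordinate and the given action on $B_0(G)$. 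Concretely, I would send the two suspension points to the vertices $\bullet$ and $-\bullet$, send $B_0(G)$ via the inclusion above, and linearly interpolate: a point $(x,t)$ with $x \in B_0(G)$ and $t \in [-1,1]$ maps to $\frac{1-|t|}{1}x + (\text{weight on }\pm\bullet)$ — I must check this lands in a genuine face of $B_0(CG)$, which follows because adding $\bullet$ (or $-\bullet$) to any face of $B_0(G)$ keeps the defining adjacency condition satisfied, as $\bullet$ is adjacent to all vertices of $G$.

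Then I would finish by the elementary observation that $\coind(\Sigma Y) \ge \coind(Y) + 1$ for any $\Z/2$-space $Y$ with this suspension action: if $S^m \to Y$ is odd, then $\Sigma S^m = S^{m+1} \to \Sigma Y$ is odd, so $\coind(\Sigma Y)\ge m+1$. Combining, $\coind(B_0(CG)) \ge \coind(\Sigma B_0(G)) \ge \coind(B_0(G)) + 1$. The main obstacle I anticipate is the verification that the proposed suspension map actually takes values in $B_0(CG)$ — i.e., that the supporting vertex set of the image of each simplex of $\Sigma B_0(G)$ really is a face of $B_0(CG)$ — and checking this is genuinely $\Z/2$-equivariant and continuous; this is where the combinatorial fact that $\bullet$ is a cone vertex (hence adjacent to everything in $G$, so can be freely adjoined to any face on the ``positive side'') does the work. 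A secondary subtlety is making sure the two apex points $\pm\bullet$ are handled so that the map is defined on the whole suspension without collapsing the $\Z/2$-action, but this is routine once the face condition is confirmed.
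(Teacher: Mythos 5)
Your proposal is correct and takes essentially the same route as the paper: both arguments realize the suspension of $B_0(G)$ inside $B_0(CG)$ with $\pm\bullet$ as the two suspension points, the key combinatorial check being that $\sigma\cup\{\bullet\}$ and $\sigma\cup\{-\bullet\}$ are faces of $B_0(CG)$ for every face $\sigma$ of $B_0(G)$ because $\bullet$ is adjacent to all of $V(G)$, and then conclude from the fact that coindex increases by at least one under suspension. The face-condition verification you flag as the ``main obstacle'' goes through exactly as you suspect and is precisely the computation the paper carries out.
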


\begin{proof}
To prove the lemma, it is sufficient to prove that $B_0(CG)$ is the suspension of $B_0(G)$ (see, for instance,~\cite[Lemma~3.1]{simonyi2006local}).
Let $G$ have vertex set $\{1,2,\dots,n\}$, and so $B_0(G)$ has vertex set $\{1,\dots,n\} \cup \{-1,\dots,-n\}$.
Note that the box complex $B_0(CG)$ has two additional vertices that are not present in $B_0(G)$, which we will denote by $\bullet$ and $-\bullet$. 
To prove that $B_0(CG)$ is the suspension of $B_0(G)$, we must verify that both $\sigma\cup\{\bullet\}$ and $\sigma\cup \{-\bullet\}$ are faces of $B_0(CG)$ for any given face $\sigma$ of $B_0(G)$.
We do only the former, as the argument for the latter is nearly identical.
Toward that end, for a given face $\sigma\in B_0(G)$, consider any $v\in \sigma\cup\{\bullet\}$ with $v> 0 $ and any $w\in \sigma\cup\{\bullet\}$ with $w<0$ (which implies $w\neq \bullet$).
There are two possibilities: If $v\in\sigma$, we have $(v,-w)\in E(G)\subset E(CG)$ because $\sigma\in B_0(G)$. 
Otherwise, if $v=\bullet$, we have $(v,-w)=(\bullet,-w)\in E(CG)$ by the definition of $CG$. 
Hence $\sigma\cup \{\bullet\}$ is a face of $B_0(CG)$.
\end{proof}

We are prepared to prove Theorem~\ref{thm:Simonyi-Tardos}.

\begin{proof}[Proof of Theorem~\ref{thm:Simonyi-Tardos}]
Since there is a continuous odd map $S^{2k-1} \to B_0(G)$, we have $\coind(B_0(G))\ge 2k-1$.
So $\coind(B_0(CG))\geq 2k$ by Lemma~\ref{lem:coindex-cone}, i.e.\ there is a continuous odd map $S^{2k}\to B_0(CG)$.
Hence, Theorem~\ref{thm:chromatic-cone} implies $\chi_c(CG)\geq 2k+1$.
Finally, by Lemma~\ref{lem:chromatic-cone} we have 
$\chi_c(G) \ge \chi_c(CG)-1 \ge (2k+1)-1 = 2k$.
\end{proof}

\section{Generalizations of the Ham Sandwich theorem}\label{sec:ham}

The Ham Sandwich theorem, originally conjectured by Steinhaus in the ``Scottish book,'' was proved by Banach and Stone--Tukey~\cite{stone1942generalized}.
See~\cite{beyer2004early} for the history of this result, including a note on Banach's contribution to the proof in dimension $3$.

\begin{theorem}[Ham Sandwich theorem]\label{thm:ham-sandwich}
Let $\mu_1,\dots,\mu_d$ be finite Borel measures in $\R^d$ such that every hyperplane has measure $0$ for each of the~$\mu_i$.
Then there exists a hyperplane~$H$ such that $\mu_i(H^+)=\frac{1}{2}\mu_i(\R^d)$ for $i=1,\dots, d$, where $H^+$ denotes one of the half-spaces defined by~$H$.
\end{theorem}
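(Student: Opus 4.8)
The plan is to reduce the simultaneous bisection problem to the classical Borsuk–Ulam theorem by encoding hyperplanes via points on a sphere. First I would parametrize the relevant hyperplanes: a unit vector $u\in S^d\subset\R^{d+1}$, written as $u=(u_0,u_1,\dots,u_d)$, determines the affine hyperplane $H_u=\{x\in\R^d : u_1x_1+\cdots+u_dx_d = u_0\}$ together with the closed half-space $H_u^+=\{x : u_1x_1+\cdots+u_dx_d \ge u_0\}$. Crucially, the antipode $-u$ determines the same hyperplane but the opposite half-space, so $H_{-u}^+$ is the closure of the complement of $H_u^+$. (One must also deal with the degenerate directions $u=(\pm 1,0,\dots,0)$, where $H_u$ is ``at infinity''; there $H_u^+$ is either all of $\R^d$ or empty, which is still consistent with the antipodal convention.)

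Next I would define the test map $g\colon S^d\to\R^d$ by $g(u)=\bigl(\mu_1(H_u^+),\dots,\mu_d(H_u^+)\bigr)$. The hypothesis that every hyperplane has $\mu_i$-measure zero ensures that swapping $u$ for $-u$ exchanges $H_u^+$ with its complement up to a measure-zero set, so $\mu_i(H_{-u}^+)=\mu_i(\R^d)-\mu_i(H_u^+)$. Therefore the map $f\colon S^d\to\R^d$ defined by $f(u)=g(u)-\tfrac12\bigl(\mu_1(\R^d),\dots,\mu_d(\R^d)\bigr)$ is odd: $f(-u)=-f(u)$. The Borsuk–Ulam theorem (stated in Section~\ref{ssec:zeros}) then yields some $u^*\in S^d$ with $f(u^*)=0$, i.e.\ $\mu_i(H_{u^*}^+)=\tfrac12\mu_i(\R^d)$ for all $i=1,\dots,d$, so $H=H_{u^*}$ is the desired bisecting hyperplane — provided $u^*$ is not one of the two degenerate directions, which is automatic since at those points $f$ takes the value $\pm\tfrac12(\mu_1(\R^d),\dots,\mu_d(\R^d))\neq 0$ unless all measures are trivial (a case that is vacuous).

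The main obstacle, and the only genuinely nontrivial point, is the continuity of $g$ (equivalently $f$) on $S^d$. I would argue this as follows: for a fixed finite Borel measure $\mu$ with the property that hyperplanes have measure zero, the function $u\mapsto\mu(H_u^+)$ is continuous because if $u_n\to u$ then the indicator functions $\mathbf{1}_{H_{u_n}^+}$ converge to $\mathbf{1}_{H_u^+}$ pointwise off the boundary hyperplane $H_u$ (using the ``no atoms on hyperplanes'' hypothesis to discard that null set), and dominated convergence applies since $\mu$ is finite. With continuity and oddness in hand, the conclusion is immediate from Borsuk–Ulam. I would note that this argument handles the degenerate directions uniformly: near $u=(1,0,\dots,0)$ the half-spaces $H_u^+$ shrink toward $\R^d$ (their boundary hyperplanes recede to infinity), so $\mu_i(H_u^+)\to\mu_i(\R^d)$, again by dominated convergence, matching the prescribed value $\mu_i(\R^d)$ at the pole, which confirms continuity there as well.
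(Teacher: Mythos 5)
Your proof is correct, and it is the standard Borsuk--Ulam argument for the Ham Sandwich theorem. Note that the paper does not actually prove Theorem~\ref{thm:ham-sandwich}: it is quoted as a classical result with a citation to Stone--Tukey, and the paper instead recovers it as the degenerate case $n=d$, $\delta=0$ of its generalizations (see Remark~\ref{rem:ham-sandwich-generalization} and Theorem~\ref{thm:ham-sandwich-restated}). Your argument uses exactly the mechanism underlying those generalizations --- lift affine hyperplanes in $\R^d$ to points of $S^d$, form the odd test map $u\mapsto\bigl(\mu_i(H_u^+)-\tfrac12\mu_i(\R^d)\bigr)_i$, check continuity via dominated convergence and the ``hyperplanes are null'' hypothesis, and apply Borsuk--Ulam --- so it is fully consistent with the paper's framework. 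All the substantive steps (oddness from $\mu_i(H_u)=0$, continuity off and at the poles, exclusion of the degenerate directions as zeros) are handled properly. One cosmetic slip: near the pole $u=(1,0,\dots,0)$ the half-spaces $H_u^+$ recede to infinity and become \emph{empty}, so $\mu_i(H_u^+)\to 0$, matching the value $\mu_i(\emptyset)=0$ at that pole; the limit $\mu_i(\R^d)$ you wrote belongs to the antipodal pole $(-1,0,\dots,0)$. This label swap does not affect the argument, since continuity holds at both poles by the same dominated-convergence reasoning and neither pole can be a zero of $f$ unless all measures vanish, in which case the theorem is trivial.
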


Informally, in the case of three measurable subsets of $\R^3$, a sandwich made of three ingredients may be equipartitioned (that is, divided into two sandwiches each containing equal amounts of all three ingredients) using a single planar slice. 

The Ham Sandwich theorem is perhaps the most well-known example of a mass partition theorem. 
In general, such theorems seek to establish the existence of families of partitions of Euclidean spaces splitting a given family of measures in a particular way.
For example, Gr{\"u}nbaum~\cite{grunbaum1960partitions} asked if it is always possible to divide a single mass in $\R^d$ by $d$ hyperplanes to obtain $2^d$ equal parts. 
This question was answered in the affirmative by Hadwiger~\cite{hadwiger1966simultane} for $d\leq 3$, but was proved to be false for $d\geq 5$ by Avis~\cite{avis1984non}.
See~\cite{roldan2022survey,de2019discrete} for surveys of mass partition theorems and related results in combinatorial topology and combinatorial geometry, including Ham Sandwich-type theorems allowing multiple planar slices and theorems concerning the fair division of cakes and necklaces.
There are versions of the Ham Sandwich theorem studying when one can bisect more masses than the dimension of the ambient space, for example including divisions by algebraic surfaces~\cite{stone1942generalized} or divisions of projections onto hyperplanes~\cite{schnider2020ham}.

Theorem~\ref{thm:main} and Corollary~\ref{cor:antipodal} allow us to extend the Ham Sandwich theorem to the setting in which the number of measures exceeds the dimension of the ambient space.
For the purpose of illustration, we refer to the following as the ``log bundle'' theorem.

Informally, in the case of three measurable subsets of the disk, Theorem~\ref{thm:logs} says the following:
Suppose a bundle of three logs needs to be divided up using equipment that permits only two kinds of cuts.
First, we can perform ``horizontal cuts,'' perpendicular to the bundle (Figure~\ref{fig:logs} (left)), dividing the bundle up into shorter bundles.
Secondly, through the centers of each resulting shorter bundles, we can perform a single ``vertical cut'' to produce two hemi-bundles (Figure~\ref{fig:logs} (right)).
Furthermore,
the blade used to perform each ``vertical cut'' is on a fixed pivot that cannot swivel by an angle of more than $\frac{2\pi}{3}$.
Then, in light of these constraints, it is always possible to obtain an equipartition of the bundle  (that is, two piles of wood each containing exactly half the mass of each log) by selecting exactly one of each of the resulting hemi-bundles from each of the shorter bundles.

\begin{figure}[h]
\centering
\begin{minipage}{.4\textwidth}
  \centering
  \includegraphics[width=.5\linewidth]{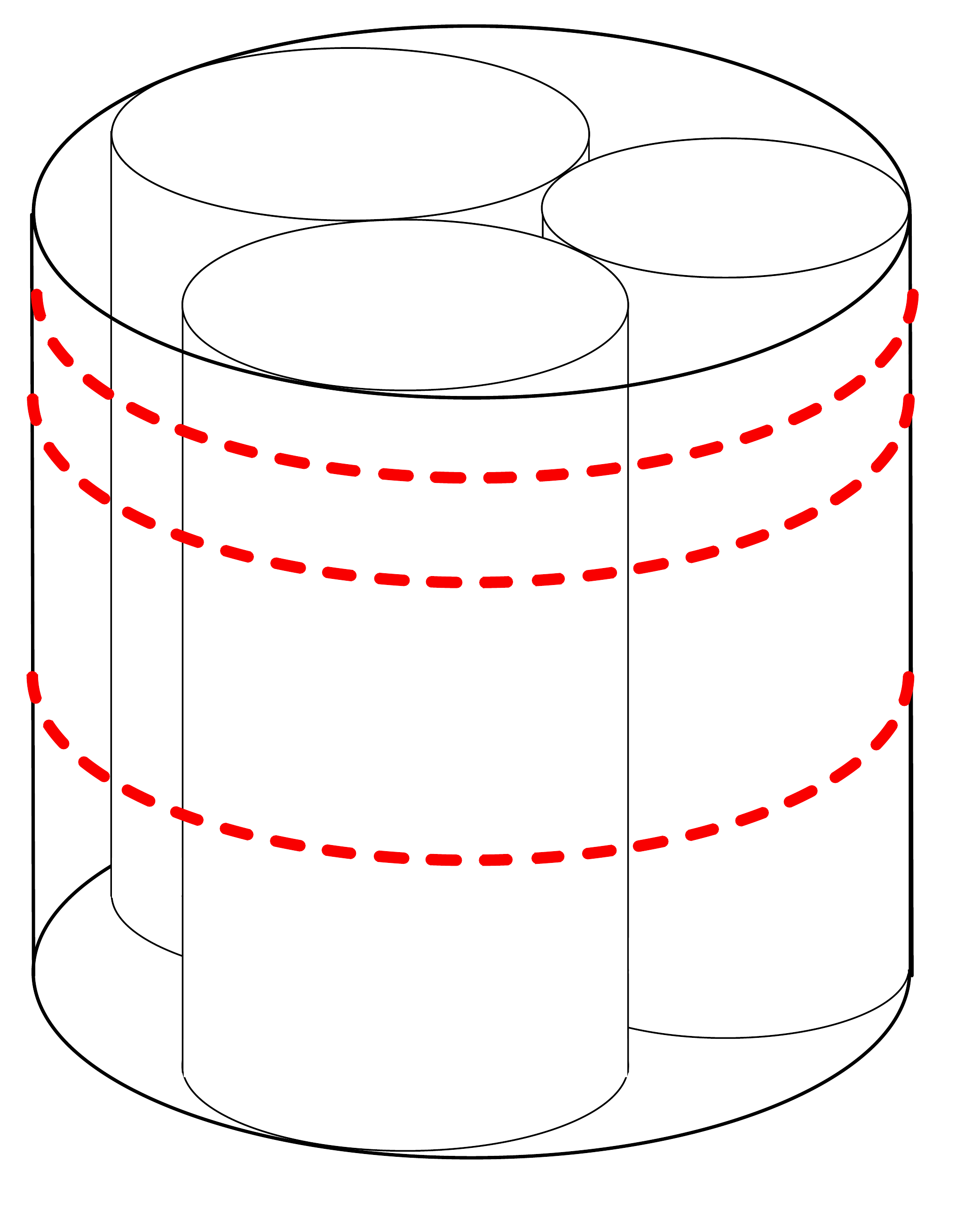}
\end{minipage}%
\begin{minipage}{.4\textwidth}
  \centering
  \includegraphics[width=.8\linewidth]{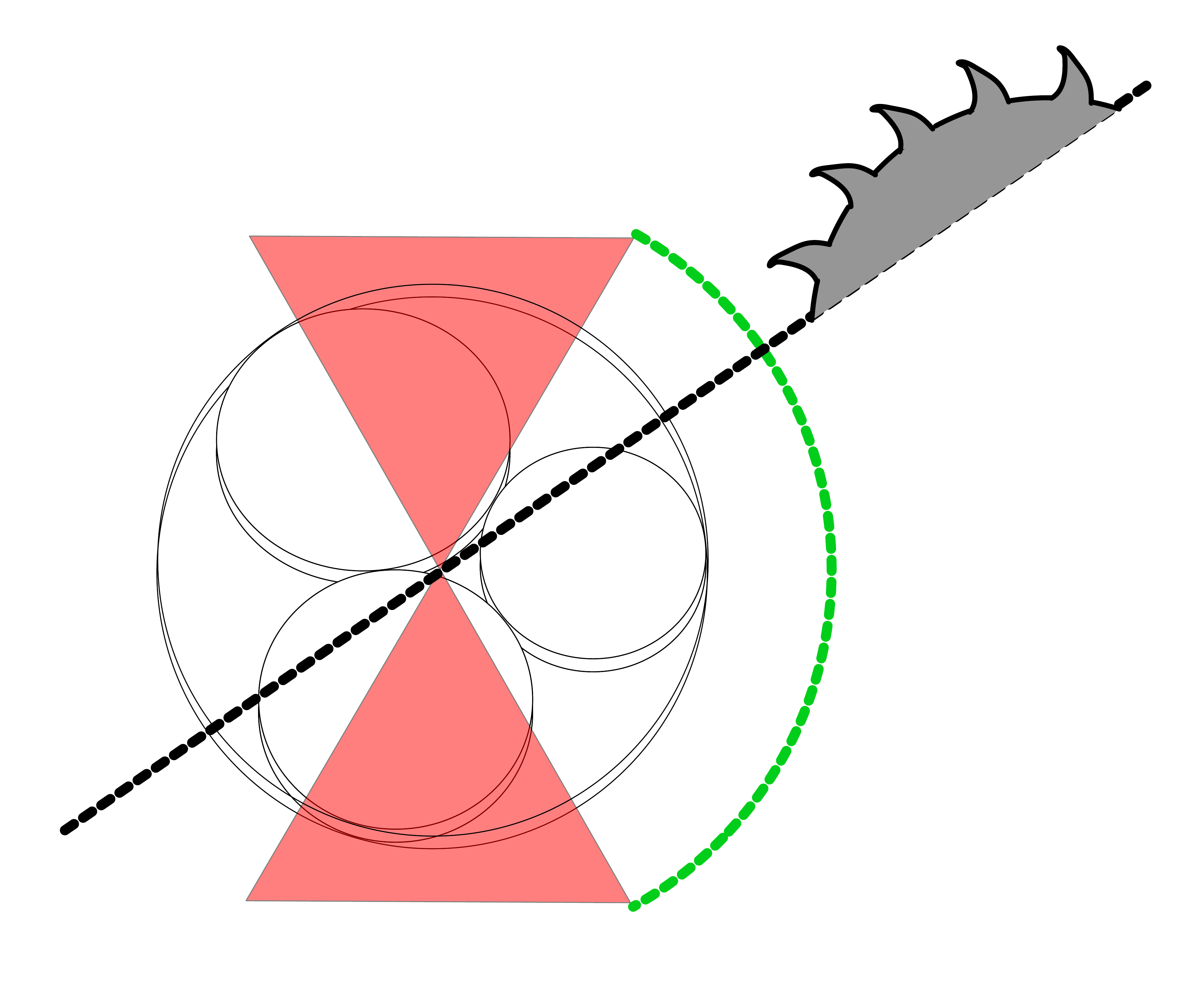}
\end{minipage}
\caption{(Left) A bundle of three logs, $A_i\times I$ for $i=1,2,3$.
Dashed red lines indicate ``horizontal cuts.''
(Right) A ``vertical'' cut through the center of one slice of the log bundle.
The saw blade is on a fixed pivot that can not swivel by an angle of more than $\frac{2\pi}{3}$.}
\label{fig:logs}
\end{figure}

\begin{theorem}[Generalization of Theorem~{\ref{thm:ham-sandwich}}]
\label{thm:logs}
Fix integers $d,n\geq 1$.
Let
\[D^{d+1}=\{(x_1,x_2,\dots, x_{d+1},0)~|~x_1^2+x_2^2+\dots+x_{d+1}^2\leq 1\}\subset \R^{d+2}\]
and suppose $A_1,\dots, A_{n}\subset D^{d+1}$ are measurable sets.
For
\[p\in \partial D^{d+1}=\{(x_1,x_2,\dots, x_{d+1},0)~|~x_1^2+x_2^2+\cdots+x_{d+1}^2 = 1\},\]
let $\pi(p)=\{x\in\R^{d+2}~|~\langle x,p\rangle =0\}$ denote the hyperplane passing through the origin normal to~$p$.
Furthermore, let $H^+(p)$ and $H^-(p)$ denote the (closed) halfspaces of $\R^{d+2}$ determined by the inequalities $\langle x,p \rangle\geq 0$ and $\langle x,p \rangle\leq 0$, respectively.
Then, there exist numbers $0=t_0\leq t_1\leq \cdots \leq t_{k}\leq t_{k+1}=1$ with $k\le n$
and vectors $p_1,\dots,p_{k+1}\in\partial D^{d+1}$ such that 
\begin{enumerate}
\item the vectors $p_i$ are ``close'' in the sense that $\arccos(\langle p_i,p_j\rangle)\leq \pi-p_{n+1}(\R P^d)$ for all $i$ and $j$ (in particular, no two vectors are antipodal), and
\item there exists an equipartition of the $n$ masses $A_j\times I$ given by taking $(A_j\times [t_{i-1},t_i])\cap H^+(p_i)$ for each $1\leq i \leq k+1$.
\end{enumerate}
\end{theorem}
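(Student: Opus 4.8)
The plan is to deduce Theorem~\ref{thm:logs} from Corollary~\ref{cor:antipodal}, applied to the round sphere $\partial D^{d+1}\cong S^d$ and to the map recording, for each ``vertical'' cut direction $p\in\partial D^{d+1}$, how much of each log $A_j$ lies on the positive side $H^+(p)$ of the cut.

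To set this up, for $p\in\partial D^{d+1}$ let $h_j(p)$ be the $(d+1)$-dimensional Lebesgue measure of $A_j\cap H^+(p)$; since $A_j$ lies in the plane $\{x_{d+2}=0\}$ and $p$ has vanishing last coordinate, $h_j(p)$ depends only on the restriction of $\langle\cdot,p\rangle$ to that plane. Two facts about $h=(h_1,\dots,h_n)\colon\partial D^{d+1}\to\R^n$ need checking, and this is where the modest work lies. First, $h$ is continuous: for $p'\to p$ the symmetric difference $H^+(p)\triangle H^+(p')$ is a double wedge through the origin whose dihedral angle tends to $0$, so its intersection with the bounded set $A_j$ has measure tending to $0$. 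Second, for every $p$ the hyperplane $\pi(p)$ meets $\{x_{d+2}=0\}$ in a $d$-dimensional subspace, so $A_j\cap\pi(p)$ is Lebesgue-null and hence $h_j(p)+h_j(-p)=|A_j|$; this identity is the only place the ``log'' geometry is used, and it is exactly what makes the genericity hypothesis of the classical Ham Sandwich theorem unnecessary. Since also $h_j(p)\le|A_j|<\infty$, the map $h$ is continuous and bounded, so Corollary~\ref{cor:antipodal} applies.

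Next I would apply Corollary~\ref{cor:antipodal} to $h$ with $\delta=\pi-p_{n+1}(\R P^d)$. It yields a set $X=\{p_1,\dots,p_{k+1}\}\subset\partial D^{d+1}$ of diameter at most $\delta$ --- with $k+1\le n+1$, hence $k\le n$, by the Carath\'eodory remark following that corollary --- together with coefficients $\lambda_1,\dots,\lambda_{k+1}\ge 0$ summing to $1$ such that $\sum_i\lambda_i h(p_i)=\sum_i\lambda_i h(-p_i)$ in $\R^n$. Put $t_i=\lambda_1+\dots+\lambda_i$, so that $t_0=0$ and $t_{k+1}=1$. Because each $H^{\pm}(p_i)$ contains the $I$-direction, $(A_j\times[t_{i-1},t_i])\cap H^+(p_i)=(A_j\cap H^+(p_i))\times[t_{i-1},t_i]$, which has volume $\lambda_i h_j(p_i)$, while the complementary piece of that slab has volume $\lambda_i h_j(-p_i)$. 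Summing over $i$, the selected pieces occupy volume $\sum_i\lambda_i h_j(p_i)$ of $A_j\times I$ and their complements occupy $\sum_i\lambda_i h_j(-p_i)$; these are equal by the conclusion of Corollary~\ref{cor:antipodal}, and since they add up to $|A_j\times I|$ each equals $\tfrac12|A_j\times I|$. That is the equipartition in~(2). For~(1): on the round sphere $\partial D^{d+1}$ the distance between $p_i$ and $p_j$ equals $\arccos\langle p_i,p_j\rangle$, so $\diam(X)\le\delta$ says exactly that $\arccos\langle p_i,p_j\rangle\le\pi-p_{n+1}(\R P^d)$ for all $i,j$; and $p_{n+1}(\R P^d)>0$ forces $\delta<\pi$, so no two of the $p_i$ are antipodal.

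I expect the only genuinely non-formal step to be the verification of the two facts about $h$ in the second paragraph; everything afterward is bookkeeping with the slab decomposition. Equivalently, one could run the argument through the odd map $g_j(p)=h_j(p)-\tfrac12|A_j|$ and cite Theorem~\ref{thm:main} or Corollary~\ref{cor:main} in place of Corollary~\ref{cor:antipodal}, since for an odd map ``$0\in\conv(g(X))$'' is literally the identity $\sum_i\lambda_i g(p_i)=0$; I would use whichever formulation keeps the log-bundle picture clearest.
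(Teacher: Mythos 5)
Your proposal is correct and follows essentially the same route as the paper's proof: define the map $p\mapsto(\text{meas}(A_1\cap H^+(p)),\dots,\text{meas}(A_n\cap H^+(p)))$, apply Corollary~\ref{cor:antipodal} to obtain the $p_i$ and convex coefficients $\lambda_i$, and set $t_i=\lambda_1+\cdots+\lambda_i$. The only difference is that you supply the continuity check and the slab bookkeeping explicitly, which the paper leaves implicit.
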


This equipartition means that for each $j$, the sum of the measures of $(A_j\times [t_{i-1},t_i])\cap H^+(p_i)$ is equal to the sum of the measures of $(A_j\times [t_{i-1},t_i])\cap H^-(p_i)$, where both sums vary over $1\leq i \leq k+1$.

\begin{proof}
Define a continuous function $f\colon S^d\to\R^n$ by \[p\mapsto \left(\text{the measure of }A_1\cap H^+(p), \dots, \text{the measure of }A_n\cap H^+(p)\right).\]
By Corollary~\ref{cor:antipodal} there exist vectors $p_1,\dots,p_{k+1}\in S^d$ (with $k\le n$ by Carath{\'e}odory's theorem) and convex coefficients $\lambda_1,\lambda_2,\dots \lambda_{k+1}$ such that $\arccos(\langle p_i,p_j\rangle)\leq \pi-p_{n+1}(\R P^d)$ for all $i$ and $j$ and 
\[\sum_{i=1}^{k+1}\lambda_i f(p_i)=\sum_{i=1}^{k+1}\lambda_i f(-p_i).\]
Observe that 
\[f(-p)=(\text{the measure of }A_1\cap H^-(p),\ \dots,\ \text{the measure of }A_n\cap H^-(p)).\]
Setting $t_i=\sum_{j=1}^i\lambda_j$ for each $0\leq i \leq k+1$ completes the proof.
\end{proof}

\begin{remark}\label{rmk:logs-stronger}
We may obtain a stronger version of Theorem~\ref{thm:logs}, allowing one more set to be equipartitioned, by dropping the requirement that each hyperplane $\pi(p)$ must pass through the origin.
In this setting, $n+1$ sets may be equipartitioned by $n+1$ hyperplanes determined by vectors $p_1,\dots, p_{n+1}$ which again satisfy $\arccos(\langle p_i,p_j\rangle)\leq \pi-p_{n+1}(\R P^d)$ for all $i$ and $j$.
\end{remark}

We will refer to a hyperplane that passes through the origin as a \emph{linear hyperplane}.
A linear hyperplane $H$ is \emph{oriented} if the two halfspaces determined by~$H$ come with a choice of positive halfspace $H^+$ and negative halfspace~$H^-$.
The Ham Sandwich theorem may be restated in the following way:

\begin{theorem}[Ham Sandwich theorem -- restated]\label{thm:ham-sandwich-restated}
Let $\mu_1,\dots,\mu_d$ be finite Borel measures on $S^d \subset \R^{d+1}$ such that every linear hyperplane in $\R^{d+1}$ has measure $0$ for each of the~$\mu_i$.
Then there exists a linear hyperplane~$H$ in $\R^{d+1}$ such that $\mu_i(H^+ \cap S^d)=\frac{1}{2}\mu_i(S^d)$ for $i=1,\dots, d$, where $H^+$ denotes one of the half-spaces defined by~$H$.
\end{theorem}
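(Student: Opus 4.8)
The plan is to prove the restated theorem directly from the Borsuk--Ulam theorem, which is the classical argument and also makes the equivalence with Theorem~\ref{thm:ham-sandwich} transparent. For $p \in S^d \subset \R^{d+1}$ write $H^+(p) = \{x \in \R^{d+1} : \langle x,p\rangle \ge 0\}$ and $H^-(p) = H^+(-p) = \{x : \langle x,p\rangle \le 0\}$, so that the linear hyperplane $H(p) = \{x : \langle x,p\rangle = 0\}$ equals $H^+(p) \cap H^-(p)$. Define $f \colon S^d \to \R^d$ by $f(p) = \big(\mu_1(H^+(p) \cap S^d), \dots, \mu_d(H^+(p) \cap S^d)\big)$.

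First I would check that $f$ is continuous. If $p_k \to p$ in $S^d$, then for every $x \in S^d$ with $\langle x,p\rangle \ne 0$ the signs of $\langle x,p_k\rangle$ and $\langle x,p\rangle$ eventually agree, so $\mathbf{1}_{H^+(p_k)\cap S^d} \to \mathbf{1}_{H^+(p)\cap S^d}$ pointwise off the great subsphere $H(p)\cap S^d$; since $\mu_i$ gives this subsphere measure $0$ by hypothesis, dominated convergence yields $\mu_i(H^+(p_k)\cap S^d) \to \mu_i(H^+(p)\cap S^d)$. This is the only place the hypothesis on hyperplanes enters, and it is the sole step requiring care. Next I would form $g \colon S^d \to \R^d$, $g(p) = f(p) - f(-p)$; since $H^+(-p)\cap S^d = H^-(p)\cap S^d$, the map $g$ is continuous and odd, so the Borsuk--Ulam theorem produces a zero $p_0 \in S^d$, i.e.\ $\mu_i(H^+(p_0)\cap S^d) = \mu_i(H^-(p_0)\cap S^d)$ for all $i$. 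Because $\mu_i(H^+(p_0)\cap S^d) + \mu_i(H^-(p_0)\cap S^d) = \mu_i(S^d) + \mu_i(H(p_0)\cap S^d) = \mu_i(S^d)$, each summand equals $\tfrac12\mu_i(S^d)$, so $H = H(p_0)$ is the desired hyperplane.

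Finally, to confirm this is a genuine restatement rather than a weakening, I would record the equivalence with the classical form: identifying $\R^d$ with the affine chart $\{x_{d+1}=1\} \subset \R^{d+1}$, the central projection $p \mapsto p/p_{d+1}$ is a homeomorphism of the open hemisphere onto $\R^d$ under which affine hyperplanes of $\R^d$ correspond bijectively to linear hyperplanes of $\R^{d+1}$ other than $\{x_{d+1}=0\}$; transporting each $\mu_i$ along this map (and extending antipodally to all of $S^d$) matches the measure-zero hypotheses and carries bisecting affine hyperplanes to bisecting linear ones and back. I expect the continuity of $f$ to be the only obstacle worth spelling out; the remainder is the standard odd-map argument.
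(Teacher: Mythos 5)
Your proof is correct. The paper itself offers no argument for Theorem~\ref{thm:ham-sandwich-restated}: it is presented as an equivalent restatement of the classical Ham Sandwich theorem (Theorem~\ref{thm:ham-sandwich}), with the equivalence left implicit. Your direct Borsuk--Ulam argument is the canonical proof, and you correctly isolate the one delicate point --- continuity of $p \mapsto \mu_i(H^+(p)\cap S^d)$, which uses the hypothesis that great subspheres are null sets together with dominated convergence for the finite measures $\mu_i$ --- while your final paragraph on central projection supplies exactly the equivalence the paper takes for granted.
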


Theorem~\ref{thm:main} implies the following:

\begin{theorem}
\label{thm:ham-sandwich-spherical}
Let $\mu_1, \dots, \mu_n$ on $S^d\subset \R^{d+1}$ be finite Borel measures such that each linear hyperplane in $\R^{d+1}$ has measure $0$ for each of the~$\mu_i$.
Then there exists a set $\mathcal{H}$ of oriented linear hyperplanes in $\R^{d+1}$ at pairwise angle (in $\R P^d$) at most $\delta=\pi-p_{n+1}(\R P^d)$ such that, for each $i$, at least half of $\mu_i$ is contained in the negative halfspace determined by some hyperplane in $\mathcal{H}$, and at least half of $\mu_i$ is contained in the positive halfspace determined by some hyperplane in $\mathcal{H}$.
\end{theorem}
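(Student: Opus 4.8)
The plan is to derive the theorem directly from Theorem~\ref{thm:main}, applied to the vector space of odd maps obtained by recentering the ``volume of a halfspace'' functions attached to $\mu_1,\dots,\mu_n$; this parallels the proof of Theorem~\ref{thm:logs}, with Theorem~\ref{thm:main} used in place of Corollary~\ref{cor:antipodal}.

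First I would fix the dictionary between oriented linear hyperplanes in $\R^{d+1}$ and points of $S^d$: to $p\in S^d$ associate the hyperplane $\pi(p)=p^{\perp}$, with positive halfspace $H^+(p)=\{x:\langle x,p\rangle\ge 0\}$ and negative halfspace $H^-(p)=\{x:\langle x,p\rangle\le 0\}=H^+(-p)$. Put $f_i(p)=\mu_i(H^+(p)\cap S^d)$ and $f=(f_1,\dots,f_n)\colon S^d\to\R^n$. The hypothesis that every linear hyperplane is $\mu_i$-null is exactly what makes each $f_i$ continuous: if $p_k\to p$, then the indicators $\mathbf{1}_{\langle\cdot,p_k\rangle\ge 0}$ converge pointwise to $\mathbf{1}_{\langle\cdot,p\rangle\ge 0}$ off the set $\pi(p)\cap S^d$, which is $\mu_i$-null, so dominated convergence gives $f_i(p_k)\to f_i(p)$. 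The same nullity gives $f_i(-p)=\mu_i(H^-(p)\cap S^d)=\mu_i(S^d)-f_i(p)$, so the recentered map $g=(g_1,\dots,g_n)\colon S^d\to\R^n$ with $g_i(p)=f_i(p)-\tfrac12\mu_i(S^d)$ is a continuous \emph{odd} map.

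Next I would let $V$ be an $n$-dimensional real vector space of continuous odd functions $S^d\to\R$ containing $g_1,\dots,g_n$; such a $V$ exists because (for $d\ge 1$) the space of continuous odd functions on $S^d$ is infinite-dimensional, and enlarging $V$ never weakens the conclusion of Theorem~\ref{thm:main}. Applying Theorem~\ref{thm:main} to $V$ yields a set $A\subset S^d$ with $\diam(A)\le\delta=\pi-p_{n+1}(\R P^d)$ such that $h|_A$ is not strictly positive for every $h\in V$. Fix $i$. Since $g_i\in V$, there is some $p\in A$ with $g_i(p)\le 0$, i.e.\ $\mu_i(H^-(p)\cap S^d)\ge\tfrac12\mu_i(S^d)$, so at least half of $\mu_i$ lies in the negative halfspace of $\pi(p)$. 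Since $-g_i\in V$ as well, there is some $q\in A$ with $g_i(q)\ge 0$, i.e.\ $\mu_i(H^+(q)\cap S^d)\ge\tfrac12\mu_i(S^d)$, so at least half of $\mu_i$ lies in the positive halfspace of $\pi(q)$.

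To conclude I would take $\mathcal{H}=\{\pi(p):p\in A\}$, each hyperplane oriented by its normal $p\in A$. Since $\diam(A)\le\delta$ in $S^d$, any two normals $p,q\in A$ satisfy $\arccos\langle p,q\rangle\le\delta$, so the hyperplanes of $\mathcal{H}$ are at pairwise angle at most $\delta$ (and, forgetting orientations, at pairwise distance at most $\delta$ in $\R P^d$); together with the previous paragraph this gives the required $\mathcal{H}$. I do not expect a genuine obstacle here: the only steps needing care are the continuity of $f$ (the single place the ``no hyperplane carries positive mass'' hypothesis enters) and the translation of ``$g_i|_A$ and $-g_i|_A$ are not strictly positive'' into the two halfspace inequalities. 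If one additionally wants $\mathcal{H}$ to be finite, one can instead route through the $\Z/2$-Carath\'eodory number: by Corollary~\ref{cor:main} we have $\delta\ge c_n(S^d)$, so Corollary~\ref{cor:antipodal} applied to $g$ produces a set $X\subset S^d$ of diameter at most $\delta$ and size at most $n+1$ together with convex weights $\lambda_j$ with $\sum_j\lambda_j g(p_j)=0$ (using that $g$ is odd, so $\int g(x)\,d\mu=\int g(-x)\,d\mu$ forces this sum to vanish); then for each $i$ the numbers $g_i(p_j)$ are neither all positive nor all negative, and one finishes with $\mathcal{H}=\{\pi(p_j):p_j\in X\}$ exactly as in the proof of Theorem~\ref{thm:logs}.
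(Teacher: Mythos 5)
Your proposal is correct and takes essentially the same approach as the paper: apply Theorem~\ref{thm:main} to the space of odd functions spanned by the recentered halfspace-mass functions $g_i(p)=\mu_i(H^+(p))-\tfrac{1}{2}\mu_i(S^d)$, and read off both halfspace inequalities from the fact that neither $g_i|_A$ nor $-g_i|_A$ is strictly positive on the resulting set $A$ of diameter at most $\delta$. The only differences are cosmetic refinements the paper glosses over (the explicit continuity check via the hyperplane-nullity hypothesis, padding $V$ to dimension exactly $n$, and the optional finite-$\mathcal{H}$ variant via Corollary~\ref{cor:antipodal}).
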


\begin{proof}
For $x\in S^d\subset \R^{d+1}$, let $H^-(x)$ and $H^+(x)$ denote the (closed) halfspaces of $\R^{d+1}$ determined by the inequalities $\langle y,x \rangle\leq 0$ and $\langle y,x \rangle\geq 0$, respectively.
Let $V$ denote the vector space of odd maps $S^d\to\R$ generated by the vectors $f_i$ defined by $f_i(x)= \mu_i(H^+(x))-\mu_i(H^-(x))$ 
for $1\leq i \leq n$.
Note that $V$ has dimension at most $n$, and that each $f_i$ is odd.
By Theorem~\ref{thm:main}, there exists a set $A\subset S^d$ of diameter at most $\delta$ such that, for every $f\in V$, the restriction $f|_A$ is not strictly positive (and, since $-f\in V$, it is also true that the restriction $f|_A$ is not strictly negative).
Hence, for each $1\leq i \leq n$, there must exist points $a_i,b_i\in A$ such that $f_i(a_i)$ is nonpositive and $f_i(b_i)$ is nonnegative, that is, at least half of $\mu_i$ is contained in the negative halfspace $H^-(a_i)$ and at least half of $\mu_i$ is contained in the positive halfspace $H^+(b_i)$.
So we are done by letting $\mathcal{H}=\{H^\pm(a)~:~a\in A\}$.
\end{proof}

\begin{remark}
\label{rem:ham-sandwich-generalization}
Instead of $\delta=\pi-p_{n+1}(\R P^d)$ we could have phrased both Theorem~\ref{thm:logs} and  Theorem~\ref{thm:ham-sandwich-spherical} for the smallest $\delta$ such that the metric thickening $S^d_\delta$ has cohomological index at least~$n$.
For $n = d$, this $\delta$ is zero, since $S^d$ is a subspace of $S^d_\delta$ for any $\delta \ge 0$.
In the case $n=d$ and $\delta =0$, the stronger version of Theorem~\ref{thm:logs} described in Remark~\ref{rmk:logs-stronger} specializes to Theorem~\ref{thm:ham-sandwich}, and Theorem~\ref{thm:ham-sandwich-spherical} specializes to Theorem~\ref{thm:ham-sandwich-restated}.
\end{remark}

\section{Generalizations of Lyusternik--Shnirel'man--Borsuk covering theorems}\label{sec:covering}

\begin{theorem}[Lyusternik--Shnirel'man--Borsuk covering theorem~{\cite{lyusternik1947topological}}]
\label{thm:Lyusternik}
For any cover $A_1,\dots, A_{d+1}$ of the sphere $S^d$ by $d+1$ closed sets, there is at least one set containing a pair of antipodal points.
\end{theorem}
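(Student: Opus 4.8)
The plan is to deduce Theorem~\ref{thm:Lyusternik} from the classical Borsuk--Ulam theorem, which is exactly the $n = d$, $\delta = 0$ case of Theorem~\ref{thm:main} in the strengthened form of Remark~\ref{rmk:gen-BU} (using $\ind(S^d_0) = \ind(S^d) = d$). First I would argue by contradiction: suppose none of the closed sets $A_1, \dots, A_{d+1}$ contains a pair of antipodal points. Replacing any empty $A_i$ by a one-point set (which contains no antipodal pair and preserves the covering property), we may assume each $A_i$ is nonempty, so that $x \mapsto \dist(x, A_i)$ is a well-defined continuous real-valued function on $S^d$ vanishing precisely on~$A_i$.

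Next I would feed the first $d$ sets into the Borsuk--Ulam-type input. Let $V$ be the vector space spanned by the $d$ continuous odd maps $g_i \colon S^d \to \R$ defined by $g_i(x) = \dist(x, A_i) - \dist(-x, A_i)$ for $1 \le i \le d$; its dimension is at most $d$. By the strengthened Theorem~\ref{thm:main}, there is a point $x \in S^d$ (a set of diameter zero) such that no $f \in V$ is strictly positive at $x$; since $f \in V$ implies $-f \in V$, this forces $f(x) = 0$ for all $f \in V$, i.e. $\dist(x, A_i) = \dist(-x, A_i)$ for every $i \le d$. Equivalently, one may apply the classical Borsuk--Ulam theorem directly to the odd map $x \mapsto (g_1(x), \dots, g_d(x)) \colon S^d \to \R^d$.

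It then remains to reach a contradiction by a case split on the common value $\rho_i := \dist(x, A_i) = \dist(-x, A_i)$. If $\rho_i = 0$ for some $i \le d$, then $A_i$ being closed forces $x, -x \in A_i$, contradicting that $A_i$ has no antipodal pair. Otherwise $\rho_i > 0$ for all $i \le d$, so neither $x$ nor $-x$ lies in $A_1 \cup \dots \cup A_d$; since the $A_j$ cover $S^d$, both $x$ and $-x$ must lie in $A_{d+1}$, again a contradiction.

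The routine parts are the continuity and oddness of the $g_i$ and the trivial reduction to nonempty sets; the only genuine step is the final case analysis, which is the standard mechanism for turning a Borsuk--Ulam coincidence into a covering statement. I do not expect a real obstacle here: all the topological content is already packaged in Theorem~\ref{thm:main} (equivalently Borsuk--Ulam), and the interest of this section lies instead in the generalization to coverings by more than $d+1$ sets, where the diameter bound $\pi - p_{n+1}(\R P^d)$ from Theorem~\ref{thm:main} becomes the essential new ingredient.
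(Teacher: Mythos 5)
Your proof is correct, and it is the standard deduction of the covering theorem from the Borsuk--Ulam theorem via distance functions; the paper itself only cites Theorem~\ref{thm:Lyusternik} without proof, but its proof of the generalization (Theorem~\ref{thm:LSB}) uses exactly this mechanism — distance functions to the first sets, a Borsuk--Ulam-type coincidence, and the same closed-set case analysis — so your argument is the $n=d$, $\delta=0$ specialization of the paper's approach. The handling of empty sets and the two cases ($\rho_i=0$ forcing $x,-x\in A_i$ by closedness, versus all $\rho_i>0$ forcing $x,-x\in A_{d+1}$) are both sound.
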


In fact, the conclusion of Theorem~\ref{thm:Lyusternik} holds for more general collections of subsets of~$S^d$.

\begin{theorem}\label{thm:Lyusternik-gen}
For any cover $A_1,\dots, A_{d+1}$ of the sphere $S^d$ by $d+1$ sets such that the first $d$ sets $A_1,\dots, A_{d}$ are each open or closed, there is at least one set containing a pair of antipodal points.
\end{theorem}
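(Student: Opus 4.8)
The plan is to follow the classical deduction of the Lyusternik--Shnirel'man--Borsuk theorem from the Borsuk--Ulam theorem, but to choose the auxiliary real-valued functions in a way that is sensitive to whether each of the first $d$ sets is open or closed. Suppose, for contradiction, that none of $A_1,\dots,A_{d+1}$ contains a pair of antipodal points. After discarding any empty sets (which cannot contain antipodal points and only make the covering hypothesis harder to satisfy) and observing that no $A_i$ equals all of $S^d$ (such a set trivially contains antipodal points), we may assume every $A_i$ with $i\le d$ is nonempty and a proper subset of $S^d$.

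First I would define, for each $i$ with $1\le i\le d$, a continuous function $f_i\colon S^d\to\R$ by
\[
f_i(x) = \begin{cases} \dist(x,A_i), & A_i \text{ closed},\\ -\dist(x, S^d\setminus A_i), & A_i \text{ open}.\end{cases}
\]
Each $f_i$ is $1$-Lipschitz, hence continuous. The point of this choice is the following dichotomy: if $A_i$ is closed then $f_i\ge 0$ and $\{x : f_i(x)=0\}=A_i$, while if $A_i$ is open then $f_i\le 0$ and $\{x: f_i(x)<0\}=A_i$, $\{x: f_i(x)=0\}=S^d\setminus A_i$. In either case, membership of a point $x$ in $A_i$ is determined by the single value $f_i(x)$; consequently, whenever $f_i(x)=f_i(-x)$ we have $x\in A_i$ if and only if $-x\in A_i$.

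Next I would apply the Borsuk--Ulam theorem to the continuous map $f=(f_1,\dots,f_d)\colon S^d\to\R^d$: the odd map $x\mapsto f(x)-f(-x)$ has a zero, so there exists $x\in S^d$ with $f_i(x)=f_i(-x)$ for all $1\le i\le d$. By the dichotomy above, for each such $i$ we have $x\in A_i\Leftrightarrow -x\in A_i$; since $A_i$ contains no antipodal pair, this forces $x\notin A_i$ and $-x\notin A_i$. Because $A_1,\dots,A_{d+1}$ cover $S^d$, both $x$ and $-x$ must then lie in $A_{d+1}$, contradicting that $A_{d+1}$ contains no antipodal pair. The only new ingredient beyond the classical proof is the signed-distance trick defining $f_i$, and the step I would be most careful with is verifying the displayed dichotomy in the open case (in particular that $\{f_i=0\}=S^d\setminus A_i$ there); everything else is routine. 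Note that $A_{d+1}$ plays only the passive role of absorbing the antipodal pair $x,-x$, which is why no hypothesis on it is needed, and a decomposition of $S^1$ into two half-open semicircular arcs shows that some topological restriction on $d$ of the $d+1$ sets cannot be dropped entirely.
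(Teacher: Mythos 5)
The paper does not prove Theorem~\ref{thm:Lyusternik-gen} itself; it only cites \cite{aigner2010proofs,greene2002new,matousek2003using}. Your argument is a correct, self-contained proof, and it is the standard deduction from the Borsuk--Ulam theorem: the only non-routine point is treating open and closed sets uniformly, which your signed-distance functions $f_i$ handle cleanly. The dichotomy checks out: for $A_i$ closed, $\{f_i=0\}=A_i$ since closed sets are exactly the zero sets of their distance functions; for $A_i$ open and proper, $S^d\setminus A_i$ is closed and nonempty, so $\{f_i<0\}=A_i$ and $\{f_i=0\}=S^d\setminus A_i$. In either case $f_i(x)=f_i(-x)$ forces $x\in A_i\Leftrightarrow -x\in A_i$, and the no-antipodal-pair assumption then excludes both points from $A_i$, pushing $x$ and $-x$ into $A_{d+1}$. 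Two small remarks. First, rather than ``discarding'' empty sets (which changes the number of sets and strictly speaking requires rerunning the argument with a map to a lower-dimensional $\R^k$), it is cleaner to set $f_i\equiv 0$ when $A_i=\emptyset$; the conclusion $x\notin A_i$, $-x\notin A_i$ is then automatic and the rest of the proof is untouched. Second, your closing example of two half-open semicircles is exactly the right witness that the topological hypothesis on $d$ of the sets cannot be dropped. This is consistent with the approach in the cited references, so there is nothing further to reconcile with the paper.
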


A proof of this theorem appears in~\cite{aigner2010proofs}; see also~\cite{greene2002new,matousek2003using}.
We now consider a generalization in which the number of sets in the covering may be arbitrarily large with respect to the dimension of the sphere.

\begin{theorem}[Generalization of Theorem~{\ref{thm:Lyusternik-gen}}]\label{thm:LSB}
Fix integers $n\ge d\geq 1$ and suppose $A_1,\dots,A_{n+1}$ is a cover of the sphere $S^d$ by $n+1$ sets such that the first $n$ sets $A_1,\dots, A_n$ are each open or closed.
Furthermore, suppose that any subset of the sphere of diameter at most $c_n(S^d)$ is contained in some subset~$A_i$.
In particular, this is satisfied if every subset of the sphere of diameter at most $\pi-p_{n+1}(\R P^d)$ is contained in some~$A_i$.
Then, there is at least one set $A_i$ containing a pair of antipodal points.
\end{theorem}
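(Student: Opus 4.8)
The plan is to derive Theorem~\ref{thm:LSB} from Theorem~\ref{thm:main} (equivalently, from the definition of $c_n(S^d)$ together with Corollary~\ref{cor:main}) by the standard device that turns a covering statement into a statement about odd maps. First I would assume for contradiction that no $A_i$ contains a pair of antipodal points. The idea is to use the cover $A_1,\dots,A_{n+1}$ to build an $n$-dimensional vector space $V$ of continuous odd functions $S^d\to\R$ which, on every set of diameter at most $c_n(S^d)$, has a strictly positive member --- this contradicts the defining property of $c_n(S^d)$ (via Theorem~\ref{thm:main}, which says precisely that $\pi - p_{n+1}(\R P^d) \ge c_n(S^d)$ and that for this diameter bound no such $V$ exists).

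The key steps, in order: (1) For each of the first $n$ sets $A_i$ (which are open or closed), define a continuous function $g_i\colon S^d\to\R$ measuring ``signed distance to $A_i$'' in an antipodally odd way. Concretely, since $A_i$ contains no antipodal pair, $A_i$ and $-A_i$ are disjoint, and (using that $A_i$ is open or closed, so that a suitable continuous Urysohn-type separation is available) one sets $g_i$ to be a continuous odd function that is strictly positive on $A_i$ and strictly negative on $-A_i$ --- for instance, for $A_i$ closed take $g_i(x) = \dist(x,-A_i) - \dist(x,A_i)$, and for $A_i$ open apply the same formula to a slightly shrunk closed subset, or pass to the complement. The precise construction must be arranged so that $g_i>0$ on $A_i$. (2) Let $V = \mathrm{span}\{g_1,\dots,g_n\}$, an $n$-dimensional (or lower-dimensional, which only helps) vector space of continuous odd maps $S^d\to\R$. (3) Apply Theorem~\ref{thm:main}: there is a set $A\subset S^d$ with $\diam(A)\le \pi - p_{n+1}(\R P^d)$ such that no $f\in V$ is strictly positive on $A$; in particular each $g_i$ is not strictly positive on $A$, so for each $i\le n$ there is a point $a_i\in A$ with $g_i(a_i)\le 0$, meaning $a_i\notin A_i$. (4) Here use the hypothesis: since $\diam(A)\le \pi - p_{n+1}(\R P^d)$ (and hence $\le c_n(S^d)$ via Corollary~\ref{cor:main}, or directly by the hypothesis as stated), the set $A$ must lie in some $A_j$. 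If $j\le n$, pick $a_j\in A\subset A_j$; but we showed $g_j\le 0$ somewhere on $A$, and more importantly we can conclude $A\not\subset A_j$ for each $j \le n$ because... — this is the point that needs care. The cleanest route: from step (3) each $a_i \notin A_i$ for $i \le n$; since $A$ lies entirely in one $A_j$, if that $j\le n$ then in particular $a_j\in A_j$, contradicting $a_j\notin A_j$; hence $j = n+1$, so $A\subseteq A_{n+1}$. (5) Finally, run the same argument replacing $A_{n+1}$'s role: since $A$ has antipodal diameter bound $<\pi$, it contains no antipodal pair (as $\diam(A)<\pi$), which is consistent; but we also note $-A$ has the same diameter and must also lie in a single set, necessarily $A_{n+1}$ again by the same reasoning, so $A\cup(-A)\subseteq A_{n+1}$, and $A\cup(-A)$ contains... — wait, $A$ need not contain antipodal pairs, but $A\cup(-A)$ does, provided $A\ne\emptyset$. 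Since $A\ne\emptyset$, pick $a\in A$; then $a\in A_{n+1}$ and $-a\in -A\subseteq A_{n+1}$, so $A_{n+1}$ contains the antipodal pair $\{a,-a\}$, contradicting our assumption.

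I expect the main obstacle to be step (1): arranging the odd separating functions $g_i$ to be genuinely continuous and strictly positive on the possibly-open-or-closed sets $A_i$, and making sure the span has the right dimension and the deduction in step (4) is airtight regarding which set $A$ falls into. The open-versus-closed case distinction mirrors the one in Theorem~\ref{thm:Lyusternik-gen} and should be handled exactly as there. One subtlety worth flagging: in step (4)–(5) one should double-check that the hypothesis ``any subset of diameter at most $c_n(S^d)$ is contained in some $A_i$'' is applied to $A$ itself (whose diameter is $\le \pi-p_{n+1}(\R P^d)$, hence $\le c_n(S^d)$ by Corollary~\ref{cor:main}), and that the reduction to ``$A\subseteq A_{n+1}$'' only uses the first $n$ sets' separating functions; the last set $A_{n+1}$ is never required to be open or closed, exactly as in the statement. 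Everything else is routine.
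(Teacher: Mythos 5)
Your architecture is sound and genuinely different from the paper's. The paper defines the \emph{non-negative} map $f(x)=(d(x,A_1),\dots,d(x,A_n))$, applies Corollary~\ref{cor:antipodal} to get a set $\{x_1,\dots,x_m\}$ of diameter at most $c_n(S^d)$ with $\sum\lambda_i f(x_i)=\sum\lambda_i f(-x_i)$, argues that $\{x_i\}$ or $\{-x_i\}$ lies in a single $A_j$ with $j\le n$, and then uses non-negativity of the coordinates to force $d(\pm x_i,A_j)=0$ for all $i$, finishing with an explicit open/closed case analysis. You instead build \emph{odd} separating functions $g_i$ with $g_i>0$ on $A_i$, conclude that the exceptional set $A$ (and, by oddness of $V$, also $-A$) cannot sit inside any $A_i$ with $i\le n$, and hence both land in $A_{n+1}$, which then contains an antipodal pair. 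This is a clean alternative that pushes the open/closed distinction entirely into the construction of the $g_i$ and avoids the paper's coordinate-wise endgame.

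Two repairs are needed. First, your inequality in step (4) is backwards: Corollary~\ref{cor:main} says $c_n(S^d)\le\pi-p_{n+1}(\R P^d)$, so $\diam(A)\le\pi-p_{n+1}(\R P^d)$ does \emph{not} imply $\diam(A)\le c_n(S^d)$. As written, invoking Theorem~\ref{thm:main} only proves the weaker ``in particular'' version of the hypothesis. To get the stated $c_n(S^d)$ version you must apply the definition of $c_n(S^d)$ (equivalently, Corollary~\ref{cor:antipodal} with $\delta=c_n(S^d)$) to the odd map $(g_1,\dots,g_n)\colon S^d\to\R^n$: from $0\in\conv(G(A))$ with $\diam(A)\le c_n(S^d)$ and convex coefficients, each coordinate of the convex combination vanishes, so some point of $A$ has $g_i\le 0$, and likewise for $-A$. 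Second, in step (1) your proposed fix for open $A_i$ of ``shrinking to a closed subset'' fails: it only yields positivity on the shrunk set, not on all of $A_i$. Your fallback of passing to the complement does work, e.g.\ $g_i(x)=\dist(x,S^d\setminus A_i)-\dist(x,S^d\setminus(-A_i))$; in fact the formula $g_i(x)=\dist(x,-A_i)-\dist(x,A_i)$ already works for open $A_i$ as well, since $A_i$ open and $A_i\cap(-A_i)=\emptyset$ force $A_i\cap\overline{-A_i}=\emptyset$. (One should also note $A\ne\emptyset$, which holds since $A$ arises as the support of a probability measure, and handle the trivial cases $A_i=\emptyset$ or $A_i=S^d$.) With these adjustments your proof is complete.
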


Recall the $\Z/2$-Carath{\'e}odory number $c_n(S^d)$ is defined as the infimum over all $t\in [0,\infty)$ such that for any odd map $f \colon X \to \R^n$ there is a set $A \subset X$ with $\diam(A) \le t$ and $0 \in \conv(f(A))$.
The above theorem generalizes Theorem~\ref{thm:Lyusternik} because if $n=d$, then the condition that any subset of the sphere of diameter at most $c_d(S^d) = 0$ is in some subset $A_i$ simply implies that the sets $A_i$ cover the sphere.

\begin{proof}
Assume, for the sake of contradiction, that no set in the cover contains antipodal points.
Consider the continuous map $f\colon S^d\to\R_{\ge0}^{n}$ defined by $f(x)=(d(x,A_1),\ldots,d(x,A_{n}))$, where $d(x,A_i)=\inf_{y\in A_i}\{d(x,y)\}$. 
By Corollary~\ref{cor:antipodal}, there exists a subset $\{x_1,\ldots,x_m\}\subset S^d$ (with $m\le n+1$) of diameter at most $c_n(S^d)$ and convex coefficients $\lambda_i> 0$ such that
\begin{equation}\label{eq:vec-y}
y:=\sum_{i=1}^m \lambda_i f(x_i)=\sum_{i=1}^m \lambda_i f(-x_i).
\end{equation}

Since $A_{n+1}$ does not contain antipodal points by assumption, for each $1\leq i \leq m$ at least one of $x_i$ and $-x_i$ must be contained in some element of $\{A_1,\dots, A_n\}$.
In fact, we claim that there must exist a single $A_j\in\{A_1,\dots, A_n\}$ containing all of $\{x_1,\dots, x_m\}$ or all of $\{-x_1,\dots, -x_m\}$.
Toward proving the claim, note that the points $x_1,\dots,x_m$ are all contained in some element of the cover because $\diam(\{x_1,\dots, x_m\})\leq c_n(S^d)$. 
Hence, either $\{x_1,\dots, x_m\}\subseteq A_{j}$ for some $1\leq j \leq n$, or $\{x_1,\dots, x_m\}\subseteq A_{n+1}$.
In the latter case, because $A_{n+1}$ does not contain antipodal points, we have $\{-x_1,\dots, -x_m\}\subseteq S^d\setminus A_{n+1}$.
Then, because $\diam(\{-x_1,\dots, -x_m\}) = \diam(\{x_1,\dots, x_m\})$, it follows that $\{-x_1,\dots, -x_m\}\subseteq A_{j}$ for some $1\leq j \leq n$.
This proves the claim.

Now, observe that either $d(x_i,A_j)=0$ for all $1\le i\le m$ or $d(-x_i,A_j)=0$ for all $1\le i\le m$. 
Furthermore, by considering the $j^\text{th}$ coordinate of $y$ in Equation~\eqref{eq:vec-y} above, it follows that both $d(x_i,A_j)=0$ and $d(-x_i,A_j)=0$ for all $i$.
There are two cases:
\begin{enumerate}
\item Suppose $A_j$ is closed. 
In this case, $d(x_i,A_j)=0$ and $d(-x_i,A_j)=0$ imply that $\{x_i,-x_i\}\subseteq A_j$ for all $i$, contradicting the assumption that no set in the cover contains antipodal points. 
\item Suppose $A_j$ is open. 
Note that $d(-x_i,A_j)=0$ implies $-x_i\in \overline{A_j}$ for all $i$.
In turn, $\overline{A_j}$ is contained in the closed set $S^n\setminus(-A_j)\supseteq A_j$.
Hence, each $-x_i$ belongs to $S^n\setminus(-A_j)$, which implies
that $x_i\notin A_j$ for all $i$.
Swapping the roles of $x_i$ and $-x_i$, a similar argument shows that $-x_i\notin A_j$ for all $i$. 
This contradicts the fact that $A_j$ contains all of $\{x_1,\dots, x_m\}$ or all of $\{-x_1,\dots, -x_m\}$.
\end{enumerate}
Hence some $A_j$ for $1\le j\le n+1$ contains a pair of antipodal points.
\end{proof}

\section{Conclusion}

The presence of a large projective code bounds the topology of the metric thickening of a sphere, which in turn controls the structure of zeros of odd maps from the sphere into Euclidean space.
As a consequence, we obtain a generalization of the Borsuk--Ulam theorem for maps from spheres into higher-dimensional codomains.
We derive consequences of this Borsuk--Ulam theorem for overdetermined versions of the Ham Sandwich theorem (more measures than the ambient dimension), 
for overdetermined versions of the Lyusternik--Shnirel'man--Borsuk covering theorem (more sets than the ambient dimension), 
for generic manifold embeddings,
and for bounds on circular chromatic numbers.

We end with open questions on the topology of metric thickenings of spheres, their concrete relation to the size of projective codes, and on a possible connection to Gromov--Hausdorff distances between spheres.

\begin{question}
Corollary~\ref{cor:high-coindex} asserts that for any  $\delta>\frac{\pi}{2}$, there is an arbitrarily large jump in coindex between $S^d_{\pi/2}\simeq S^d$ and $S^d_\delta$ for $d$ sufficiently large.
Is it possible to determine $\coind(S^d_\delta)$ and $\ind(S^d_\delta)$ as a function of $\delta \in (\frac{\pi}{2}, \pi)$ at least asymptotically in~$d$?
\end{question}

\begin{question}
Theorem~\ref{thm:lower-bound} shows if $p_{n+1}(\R P^d) \ge \pi-\delta$ then $\ind(S^d_\delta) \ge \coind(S^d_\delta) \ge n$.
Does an approximate converse of this result hold?
\end{question}

We only raise the question about an ``approximate'' converse, since $S^d_\delta$ in addition to the size of codes in $\R P^d$ also encodes symmetries of~$S^d$.
For example, an equilateral triangle inscribed in $\R P^1$ gives a projective code with distance~$\frac{\pi}{3}$.
For $\delta = \pi - \frac{\pi}{3}$, Theorem~\ref{thm:lower-bound} then gives an odd map $S^2 \to S^1_\delta$.
However, $S^1_\delta \simeq S^3$, since there are a circle's worth of equilateral triangles in~$\R P^1$.  

\begin{question}
For $d\le n$, could the $\Z/2$-Carath{\'e}odory number $c_n(S^d)$ be equal to $2\cdot d_{\gh}(S^d,S^n)$, twice the Gromov-Hausdorff distance between the spheres $S^d$ and $S^n$, equipped with the geodesic metric?
This is known to be true when $n\le 3$, and is consistent with known bounds on $c_n(S^d)$ (see~\cite[Table page~80]{BushThesis}) and on $d_{\gh}(S^d,S^n)$ (see~\cite[Figure~2]{lim2021gromov}).
Or does one quantity bound the other?
\end{question}

\section*{Acknowledgements}

We would like to thank Chris Cox for a helpful explanation regarding~\cite{bukh2020}, and Sunhyuk Lim for pointing out a correction in Remark~\ref{rem:explicit-code-bounds}.

\bibliographystyle{plain}
\bibliography{TheTopologyOfProjectiveCodesAndTheDistributionOfZerosOfOddMaps.bib}

\begin{thebibliography}{10}

\bibitem{adamaszek2017vietoris}
Micha{\l} Adamaszek and Henry Adams.
\newblock The {V}ietoris--{R}ips complexes of a circle.
\newblock {\em Pacific Journal of Mathematics}, 290(1):1--40, 2017.

\bibitem{AAF}
Micha{\l} Adamaszek, Henry Adams, and Florian Frick.
\newblock Metric reconstruction via optimal transport.
\newblock {\em SIAM Journal on Applied Algebra and Geometry}, 2(4):597--619,
  2018.

\bibitem{ABF}
Henry Adams, Johnathan Bush, and Florian Frick.
\newblock Metric thickenings, {B}orsuk--{U}lam theorems, and orbitopes.
\newblock {\em Mathematika}, 66(1):79--102, 2020.

\bibitem{HA-FF-ZV}
Henry Adams, Florian Frick, and {\v{Z}}iga Virk.
\newblock Vietoris thickenings and complexes have isomorphic homotopy groups.
\newblock {\em arXiv preprint arXiv:2206.08812}, 2022.

\bibitem{AdamsMemoliMoyWang}
Henry Adams, Facundo M{\'e}moli, Michael Moy, and Qingsong Wang.
\newblock The persistent topology of $p$-{V}ietoris--{R}ips and \v{C}ech metric
  thickenings.
\newblock {\em arXiv preprint arXiv:2109.15061}, 2021.

\bibitem{aigner2010proofs}
Martin Aigner and G{\"u}nter~M Ziegler.
\newblock {\em Proofs from the Book}.
\newblock Springer-Verlag Berlin Heidelberg, 2018.

\bibitem{almgren1972theory}
Frederick~J Almgren.
\newblock The theory of varifolds--{A} variational calculus in the large, 1972.

\bibitem{avis1984non}
David Avis.
\newblock Non-partitionable point sets.
\newblock {\em Information processing letters}, 19(3):125--129, 1984.

\bibitem{beyer2004early}
William~A Beyer and Andrew Zardecki.
\newblock The early history of the ham sandwich theorem.
\newblock {\em The American Mathematical Monthly}, 111(1):58--61, 2004.

\bibitem{blagojevic2020}
Pavle~VM Blagojevi{\'c}, Frederick~R Cohen, Michael~C Crabb, Wolfgang L{\"u}ck,
  and G{\"u}nter~M Ziegler.
\newblock The mod 2 equivariant cohomology algebras of configuration
  spaces--revised.
\newblock {\em arXiv preprint arXiv:2004.12350}, 2020.

\bibitem{borsuk1933}
Karol Borsuk.
\newblock Drei {S}{\"a}tze {\"u}ber die n-dimensionale euklidische
  {S}ph{\"a}re.
\newblock {\em Fundamenta Mathematicae}, 20(1):177--190, 1933.

\bibitem{bourgin1955some}
David~G Bourgin.
\newblock On some separation and mapping theorems.
\newblock {\em Commentarii Mathematici Helvetici}, 29(1):199--214, 1955.

\bibitem{bukh2020}
Boris Bukh and Christopher Cox.
\newblock Nearly orthogonal vectors and small antipodal spherical codes.
\newblock {\em Israel Journal of Mathematics}, 238:359--388, 2020.

\bibitem{BushThesis}
Johnathan Bush.
\newblock {\em Topological, geometric, and combinatorial aspects of metric
  thickenings}.
\newblock PhD thesis, Colorado State University, 2021.

\bibitem{Carlsson2009}
Gunnar Carlsson.
\newblock Topology and data.
\newblock {\em Bulletin of the American Mathematical Society}, 46(2):255--308,
  2009.

\bibitem{cohn2016optimal}
Henry Cohn, Abhinav Kumar, and Gregory Minton.
\newblock Optimal simplices and codes in projective spaces.
\newblock {\em Geometry \& Topology}, 20(3):1289--1357, 2016.

\bibitem{crabb2021borsuk}
Michael~C Crabb.
\newblock On {B}orsuk--{U}lam theorems and convex sets.
\newblock {\em arXiv preprint arXiv:2108.10705}, 2021.

\bibitem{csorba2007homotopy}
P{\'e}ter Csorba.
\newblock Homotopy types of box complexes.
\newblock {\em Combinatorica}, 27(6):669--682, 2007.

\bibitem{de2019discrete}
Jes{\'u}s De~Loera, Xavier Goaoc, Fr{\'e}d{\'e}ric Meunier, and Nabil Mustafa.
\newblock The discrete yet ubiquitous theorems of {C}arath{\'e}odory, {H}elly,
  {S}perner, {T}ucker, and {T}verberg.
\newblock {\em Bulletin of the American Mathematical Society}, 56(3):415--511,
  2019.

\bibitem{ghomi2008totally}
Mohammad Ghomi and Serge Tabachnikov.
\newblock Totally skew embeddings of manifolds.
\newblock {\em Mathematische Zeitschrift}, 258(3):499--512, 2008.

\bibitem{greene2002new}
Joshua~E Greene.
\newblock A new short proof of {K}neser's conjecture.
\newblock {\em The American Mathematical Monthly}, 109(10):918--920, 2002.

\bibitem{gromov1983filling}
Mikhael Gromov.
\newblock Filling {R}iemannian manifolds.
\newblock {\em Journal of Differential Geometry}, 18(1):1--147, 1983.

\bibitem{gromov2003isoperimetry}
Mikhail Gromov.
\newblock Isoperimetry of waists and concentration of maps.
\newblock {\em Geometric \& Functional Analysis GAFA}, 13(1):178--215, 2003.

\bibitem{grunbaum1960partitions}
Branko Gr{\"u}nbaum.
\newblock Partitions of mass-distributions and of convex bodies by hyperplanes.
\newblock {\em Pacific Journal of Mathematics}, 10(4):1257--1261, 1960.

\bibitem{guth2008waist}
Larry Guth.
\newblock The waist inequality in {G}romov's work.
\newblock {\em The Abel Prize}, 2012:181--95, 2008.

\bibitem{hadwiger1966simultane}
Hugo Hadwiger.
\newblock Simultane vierteilung zweier k{\"o}rper.
\newblock {\em Archiv der Mathematik}, 17(3):274--278, 1966.

\bibitem{harrison2020}
Michael Harrison.
\newblock Introducing totally nonparallel immersions.
\newblock {\em Advances in Mathematics}, 374:107337, 2020.

\bibitem{hausmann1995vietoris}
Jean-Claude Hausmann.
\newblock On the {V}ietoris--{R}ips complexes and a cohomology theory for
  metric spaces.
\newblock {\em Ann. Math. Studies}, 138:175--188, 1995.

\bibitem{katz1983filling}
Mikhail Katz.
\newblock The filling radius of two-point homogeneous spaces.
\newblock {\em Journal of Differential Geometry}, 18(3):505--511, 1983.

\bibitem{Latschev2001}
Janko Latschev.
\newblock Vietoris--{R}ips complexes of metric spaces near a closed
  {R}iemannian manifold.
\newblock {\em Archiv der Mathematik}, 77(6):522--528, 2001.

\bibitem{levenshtein1998}
Vladimir~I Levenshtein.
\newblock Universal bounds for codes and designs.
\newblock {\em Handbook of coding theory}, 1:499--648, 1998.

\bibitem{lim2021gromov}
Sunhyuk Lim, Facundo M{\'e}moli, and Zane Smith.
\newblock The {G}romov--{H}ausdorff distance between spheres.
\newblock {\em arXiv preprint arXiv:2105.00611}, 2021.

\bibitem{lovasz1978}
L{\'a}szl{\'o} Lov{\'a}sz.
\newblock Kneser's conjecture, chromatic number, and homotopy.
\newblock {\em Journal of Combinatorial Theory, Series A}, 25(3):319--324,
  1978.

\bibitem{lyusternik1947topological}
Lazar~Aronovich Lyusternik and Lev~Genrikhovich Shnirel'man.
\newblock Topological methods in variational problems and their application to
  the differential geometry of surfaces.
\newblock {\em Uspekhi Matematicheskikh Nauk}, 2(1):166--217, 1947.

\bibitem{matousek2003using}
Ji\v{r}\'{i} Matou\v{s}ek.
\newblock {\em Using the Borsuk--{U}lam theorem}.
\newblock Springer Science \& Business Media, 2008.

\bibitem{matousek2002topological}
Ji\v{r}\'{i} Matou\v{s}ek and G{\"u}nter~M Ziegler.
\newblock Topological lower bounds for the chromatic number: {A} hierarchy.
\newblock {\em Jahresber.\ Deutsch.\ Math.-Verein.}, 106:71--90, 2002.

\bibitem{memarian2011gromov}
Yashar Memarian.
\newblock On {G}romov's waist of the sphere theorem.
\newblock {\em Journal of Topology and Analysis}, 3(1):7--36, 2011.

\bibitem{MoyMasters}
Michael Moy.
\newblock Persistence stability for metric thickenings.
\newblock Master's thesis, Colorado State University, 2021.

\bibitem{moy2022vietoris}
Michael Moy.
\newblock Vietoris--{R}ips metric thickenings of the circle.
\newblock {\em arXiv preprint arXiv:2206.03539}, 2022.

\bibitem{roldan2022survey}
Edgardo Rold{\'a}n-Pensado and Pablo Sober{\'o}n.
\newblock A survey of mass partitions.
\newblock {\em Bulletin of the American Mathematical Society}, 59(2):227--267,
  2022.

\bibitem{schnider2020ham}
Patrick Schnider.
\newblock Ham-sandwich cuts and center transversals in subspaces.
\newblock {\em Discrete \& Computational Geometry}, 64(4):1192--1209, 2020.

\bibitem{simonyi2006local}
G{\'a}bor Simonyi and G{\'a}bor Tardos.
\newblock Local chromatic number, {K}y {F}an's theorem, and circular colorings.
\newblock {\em Combinatorica}, 26(5):587--626, 2006.

\bibitem{stone1942generalized}
Arthur~H Stone and John~W Tukey.
\newblock Generalized ``sandwich” theorems.
\newblock {\em Duke Mathematical Journal}, 9(2):356--359, 1942.

\bibitem{thompson1983error}
Thomas~M Thompson.
\newblock {\em From error-correcting codes through sphere packings to simple
  groups}, volume~21.
\newblock American Mathematical Soc., 1983.

\bibitem{yang1954theorems}
Chung-Tao Yang.
\newblock On theorems of {B}orsuk--{U}lam, {K}akutani--{Y}amabe--{Y}ujob{\^o}
  and {D}yson, {I}.
\newblock {\em Annals of Mathematics}, 60(2):262--282, 1954.

\bibitem{yang1955theorems}
Chung-Tao Yang.
\newblock On theorems of {B}orsuk--{U}lam, {K}akutani--{Y}amabe--{Y}ujob{\^o}
  and {D}yson, {II}.
\newblock {\em Annals of Mathematics}, 62(2):271--283, 1955.

\end{thebibliography}

\end{document}